\newtheorem{thm}{Theorem}[section]
\newtheorem{lemma}[thm]{Lemma}
\newtheorem{remark}[thm]{Remark}
\newtheorem{corollary}[thm]{Corollary}
\theoremstyle{definition}
\newtheorem{proposition}[thm]{Proposition}
\newtheorem{example}[thm]{Example}
 \theoremstyle{remark}
\numberwithin{equation}{section}
\newcommand{\abs}[1]{\lvert#1\rvert}
\begin{document}

\def\frakl{{\mathfrak L}}
\def\frakg{{\mathfrak G}}
\def\bbf{{\mathbb F}}
\def\bbl{{\mathbb L}}
\def\bbz{{\mathbb Z}}
\def\bbr{{\mathbb R}}
\def\bbc{{\mathbb C}}

\def\bvp{\bf{\varphi}}

\def\ad{\textsf{ad}}
\def\GL{\text{GL}}
\def\Der{\mbox{\rm Der}}
\def\Hom{\textsf{Hom}}
\def\ind{\textsf{ind}}
\def\res{\textsf{res}}
\def\gl{\frak{gl}}
\def\sl{\frak{sl}}
\def\Ker{\textsf{Ker}}
\def\Lie{\textsf{Lie}}
\def\id{\textsf{id}}
\def\det{\textsf{det}}
\def\Lie{\textsf{Lie}}
\def\Aut{\textsf{Aut}}
\def\Ext{\textsf{Ext}}
\def\Coker{\textsf{{Coker}}}
\def\dim{\textsf{{dim}}}
\def\pr{\mbox{\sf pr}}
\def\SL{\text{SL}}

\def\geqs{\geqslant}

\def\ba{{\mathbf a}}
\def\bd{{\mathbf d}}
\def\bbk{{\mathbb K}}
\def\co{{\mathcal O}}
\def\cn{{\mathcal N}}
\def\cv{{\mathcal V}}
\def\cz{{\mathcal Z}}
\def\cq{{\mathcal Q}}
\def\cf{{\mathcal F}}
\def\cc{{\mathcal C}}
\def\ca{{\mathcal A}}

\def\sl{{\frak{sl}}}

\def\ggg{{\frak g}}
\def\lll{{\frak l}}
\def\hhh{{\frak h}}
\def\nnn{{\frak n}}
\def\sss{{\frak s}}
\def\bbb{{\frak b}}
\def\ccc{{\frak c}}
\def\ooo{{\mathfrak o}}
\def\ppp{{\mathfrak p}}
\def\uuu{{\mathfrak u}}

\def\p{{[p]}}
\def\modf{\text{{\bf mod}$^F$-}}
\def\modr{\text{{\bf mod}$^r$-}}

\title[Irreducible modules of $\sl_{mp}$ in characteristic $p$] {Irreducible modules of $\frak {sl}_{mp}$ in characteristic $p$ with regular or subregular nilpotent $p$-characters}
\author{Bin Liu, Bin Shu and Xin Wen}
\address{School of Mathematical Sciences, East China Normal University, Shanghai, 200241, China.} \email{1918724868@qq.com}
\address{School of Mathematical Sciences, East China Normal University, Shanghai, 200241, China.} \email{bshu@math.ecnu.edu.cn}
\address{Ping An Technology, Shanghai, Co., Ltd, 200120.}\email{dios0000@163.com}
\subjclass[]{}
 \keywords{regular nilpotent $p$-characters, subregular nilpotent $p$-characters, standard Levi forms}
\thanks{This work is partially supported by the NSF of China (Grants: 12071136 and 11771279) and Shanghai Key
Laboratory of PMMP (No. 13dz2260400).}

\begin{abstract} Let $\bbk$ be an algebraically closed field of prime characteristic $p$. If $p$ does not divide $n$, irreducible modules over $\frak {sl}_n$ for regular and subregular nilpotent representations have already known(see \cite{Jan2} and \cite{Jan3}). In this article, we investigate the question when $p$ divides $n$, and precisely describe simple modules of $\frak {sl}_n$ for regular and subregular nilpotent representations.
\end{abstract}

\maketitle
\section*{0. Introduction}

\subsection{A general setting-up} Let $\bbk$ be an algebraically closed field of characteristic $p>0$. We denote by $\frak {gl}_n:=\gl_n(\bbk)$ the general linear Lie algebra and by $\sl_n:=\sl_n(\bbk)$ the special linear Lie algebra over $\bbk$. Let $\ggg=\gl_n$ or $\sl_n$ in this section. Obviously, $\ggg$ is a restriceted Lie algebra with $p$-th power map, denote it by $x \mapsto x^{[p]}$.  Associated with any given $p$-character $\chi\in \ggg^*$, the $\chi$-reduced enveloping algebra
$U_{\chi}(\ggg)$ is defined to be the quotient of the universal enveloping algebra $U(\ggg)$ by the ideal generated by all $x^p-x^{[p]}-\chi(x)^p$, $x\in \ggg$. Each isomorphism class of irreducible representations of $\ggg$ is associated with a unique $p$-character $\chi$. The corresponding irreducible representations are called $\chi$-reduced irreducible representations, which are totally irreducible representations over the $\chi$-reduced enveloping algebra $U_\chi(\ggg)$. More generally,  one can regard the category of $U_\chi(\ggg)$-modules as a subcategory of $\ggg$-module category associated with a given $p$-character $\chi$, denote by $\cc_\chi$.
Let $\hhh$ be the Cartan subalgebra of $\ggg$ consisting of diagonal matrices in $\ggg$. Then $\hhh$ is spanned by $\{E_{ii}\mid i=1,\ldots,n\}$ for $\ggg=\gl_n$ where $E_{ii}$ is an elementary matrix with $(i,i)$-entry being equal to $1$, and all other entries equal to zero.
Denote by $\varepsilon_i\in\hhh^*$ defined via mapping a matrix in $\frak h$ to its $i$-th diagonal entry. Then $\ggg$ has a simple root system $\Pi:=\{\alpha_i=\varepsilon_i-\varepsilon_{i+1}\mid i=1,\ldots,n-1\}$, and the root system $R=\{\varepsilon_i-\varepsilon_j\mid  1\leq i\ne j\leq n \}$.  The system $R$  is divided into the positive part $R^+=\{ \varepsilon_i-\varepsilon_j\mid  1\leq i< j\leq n \}$ and the negative part $R^-=-R^+$.  We have a triangular decomposition $\ggg=\nnn^+ \oplus \frak h \oplus \frak n^-$ with $\nnn^\pm=\sum_{\alpha\in R^\pm}\ggg_\alpha$, and $\ggg_\alpha=\{X\in\ggg\mid \ad H(X)=\alpha(H)X\;\; \forall H\in\hhh\}$. Let $\bbb$ be the Borel subalgebra corresponding to $R^+$, which is equal to $\hhh\oplus \nnn^+$.
Let $W$ be the Weyl group of $\ggg$ which is equal to the symmetric group $\frak S_n$ with $n$ letters. Let $\varpi_1, \cdots, \varpi_{n-1}$ be the dual basis of $\Pi$, namely $\left \langle\varpi_i,{\alpha_{j}}^{\vee} \right \rangle=\delta_{ij}$ with $1 \leq i,j \leq n-1$. We call them the fundamental dominant weights. Set $\rho=\dfrac{1}{2}\sum_{\alpha \in R^+}\alpha$, it is easy to see $\rho=\sum_{i=1}^{n-1}\varpi_i$.


\subsection{Nilpotent $p$-characters} In this subsection, denote $G=\SL_n$ the special linear group over $\bbk$ and denote $\ggg=\sl_n$, the Lie algebra of $G$.
In the present case $p\mid n$,  there is no longer a non-degenerate $G$-equivariant bilinear form on $\ggg$. Correspondingly, there is no longer a $G$-equivariant isomorphism
between $\ggg$ and $\ggg^*$. So it is not available to carry Jordan-Chevalley decomposition from $\ggg$ to $\ggg^*$. Fortunately, Kac-Weisfeiler  established “Jordan-Chevalley decomposition" directly on $\ggg^*$ in \cite{KW2}.  Set
\begin{align*}
&(\nnn^-)^*=\{\phi\in \ggg^*\mid \phi(\hhh+\nnn^+)=0  \}, \cr
&(\hhh)^*=\{\phi\in\ggg^*\mid \phi(\nnn^++\nnn^-)=0\},\cr
&(\bbb^-)^*=\{\phi\in \ggg^*\mid \phi(\nnn^+)=0  \}.
\end{align*}
By \cite[Theorem 4]{KW2}, any $\phi\in \ggg^*$ has a unique pair of linear functions $\phi_s$ and $\phi_n$ such that
``Jordan decomposition" $\phi=\phi_s+\phi_n$ holds, satisfying the conditions
 \begin{itemize} \label{Jordan dec}
 \item[(1)] $g.\phi_s\in (\hhh)^*$, and  $g.\phi_n\in (\nnn^-)^*$ for some $g \in G$;
   \item[(2)] If $g.\phi_s(H_\alpha)\ne 0$, then $g.\phi_n(X_\alpha)=g.\phi_n(X_{-\alpha})=0$ for $\phi\in R^+$, where $X_\alpha=E_{ij}$ for $\alpha=\varepsilon_i-\varepsilon_j$, and $H_\alpha=E_{ii}-E_{jj}$.
        \end{itemize}
  So, any $\phi\in\ggg^*$ lies in the coadjoint $G$-orbit of $(\bbb^-)^*$. Up to the coadjoint $G$-conjugation, the centralizer of $\phi_s$ in $\ggg$ is a Levi subalgebra associated with the root subsystem $\{\alpha\in R\mid g.\chi_s(H_\alpha)=0 \}$.  For any $\chi\in\ggg^*$, $\chi$ is called nilpotent (or coadjoint nilpotent) if $\chi=\chi_n$. All of those nilpotent $\chi$ constitute a $G$-stable variety. This variety will be called the coadjoint nilpotent cone of $\ggg^*$,  denoted by $\mathscr{N}_{\ggg^*}$. Note that if $\chi_1$ and $\chi_2$ lie in some $G$-orbit, then $U_{\chi_1}(\ggg)$ and $U_{\chi_2}(\ggg)$ are isomorphic as algebras because any element of $G$ gives rise to an restricted automorphism of $\sl_n$. So we are only concerned with a representative of the coadjoint $G$-orbit of $\chi$ when we investigate the representations of $U_\chi(\ggg)$.

\subsubsection{Kac-Weisfeiler's Morita equivalence in the case $\sl_n$ with $p\mid n$} For any $\chi\in \ggg^*$ for $\ggg=\sl_n$, suppose $\chi=\chi_s+\chi_n$ is Jordan decomposition as in the previous paragraph.
Modulo the coadjoint $G$-conjugation, the centralizer $\ggg_{\chi_s}$  of $\chi_s$ in $\ggg$ is a Levi subalgebra associated with the root subsystem $\{\alpha\in R\mid \chi_s(H_\alpha)=0 \}$. In \cite[Theorem 2]{KW} (and formuated by Friedlander-Parshall in \cite[Theorems 3.2 and 8.3]{FP1}), Kac-Weisfeiler' Morita equivalence indicates that  the category of $U_\chi(\ggg)$-modules is equivalent to  that of $U_{\chi}(\ggg_{\chi_s})$.
The latter is equivalent to the category of $U_{\chi_n}(\ggg_{\chi_s})$-modules, which is suitable to the case of $\sl_n$ when $p\mid n$.
This means that representations of $U_\chi(\ggg)$ for any $\chi\in\ggg^*$ can be reduced to the case when $\chi$ is nilpotent, up to Morita equivalence.

 \subsubsection{}\label{X_p} From now on, we are then concerned with the situation that   $\chi$ is nilpotent, satisfying $\chi(\bbb)=0$. In this case, $U_\chi(\bbb)=U_0(\bbb)$, any irreducible $U_0(\bbb)$-module is one-dimensional with $\nnn^+$-trivial action, and $\hhh$-scale action by  $\lambda\in \mathscr{X}_p \subsetneq \hhh^*$ which by definition consists of  all linear function on $\hhh$  satisfying  $\lambda(H)^p=\lambda(H)$ for all $H\in \hhh$. The corresponding one-dimensional $U_0(\bbb)$-module is denoted by $\bbk_\lambda$.
Define a $U_\chi(\ggg)$-module  induced from $U_{\chi}(\bbb)$-module $\bbk_\lambda$
\begin{align}\label{eq: baby verma}
Z_\chi(\lambda)=U_\chi(\ggg)\otimes_{U_\chi(\bbb)}\bbk_\lambda
\end{align}
which is called a baby Verma module.

Note that if $\ggg=\frak {gl}_n$, then $\mathscr X_p=\bigoplus_{i=1}^{n}\mathbb{F}_p\varepsilon_i$; if $\ggg=\frak {sl}_n$, then $\mathscr X_p=\bigoplus_{i=1}^{n-1}\mathbb{F}_p\alpha_i=\{x \mid x=\sum_{i=1}^{n}a_i \varepsilon_{i},\text{with }a_i \in \mathbb{F}_p \text{ and }\sum_{i=1}^{n}a_i=0\}$.

\subsection{Nilpotent $p$-characters of standard Levi-forms}

\subsubsection{} By exploiting the grading structure on $\ggg^*$ of a co-character of $G$ (see \cite{L11}), Lusztig investigated the coadjoint nilpotent cone in arbitrary prime characteristic (see \cite[Theorem 2.2]{L10}). Applying his result to $\SL_n$ with $p\mid n$, one has that $\mathscr{N}_{\ggg^*}$ can be divided into  locally closed $G$-invariant smooth subvarieties indexed by the unipotent classes in $\SL_n(\bbc)$ over the complex numbers.

 So we can say that a $p$-character $\chi\in\mathscr{N}_{\ggg^*}$ is  regular (resp. subregular) if under Lusztig's partition of $\mathscr{N}_{\ggg^*}$, its coadjoint $G$-orbit corresponds to the regular nilpotent orbit (resp. the subregular nilpotent orbit) over the complex numbers.

In particular, we take the standard representatives of the regular nilpotent orbit and of the subregular nilpotent orbit as below. For the  former, it is defined by
 \begin{equation}\label{eq: regular}
 \chi(X_{-\alpha_i})=1, i=1,\ldots,n-1\mbox{ while }\chi(\hhh \oplus \ggg_\alpha)=0 \mbox{ for all }\alpha\in R\backslash (-I),
 \end{equation}
 with $I= \{\alpha_1,\alpha_2,\ldots,\alpha_{n-1}\}$.
  And for the latter, it is defined by
  \begin{equation}\label{eq: subregular}
  \chi(X_{-\alpha_i})=1, i=1,\ldots,n-2 \mbox{ while } \chi(\hhh \oplus \ggg_\alpha)=0 \mbox{ for all }
  \alpha\in R\backslash (-I)
  \end{equation}
with $I=\{\alpha_1,\ldots, \alpha_{n-2}\}$.  In Friedlander-Parshall's term, those standard representatives are of standard Levi-form associated with $I= \{\alpha_1,\alpha_2,\ldots,\alpha_{n-1}\}$, and $I=\{\alpha_1,\ldots, \alpha_{n-2}\}$, respectively.

Let $s_{\alpha}$ with $\alpha \in R$ be the reflection, i.e. $s_{\alpha}(x)=x-\left \langle x,\alpha^{\vee} \right \rangle \alpha$ where $x \in \ggg^*$. Obviously, $s_{\varepsilon_i-\varepsilon_j}$($1 \leq i \neq j \leq n-1$) is identified with transposition $(i,j)$. In particular, $s_{\alpha_{i}}$($1 \leq i \leq n-1$) is identified with transposition $(i,i+1)$. Since $W$ be the Weyl group of $\ggg$, it is clear that $W$ is generated by all reflection $s_{\alpha}$ with $\alpha \in R$, i.e. $W$ is generated by all reflection $s_{\alpha}$ with $\alpha \in \Pi$. We can identify $W$ with the symmetic group $\frak S_n$. For $w \in W$ and $\lambda \in \frak h^*$, define a dot action of $W$ by $w.\lambda=w(\lambda+\rho)-\rho$.

We denote by $W_I$ the subgroup of $W$ generated by all reflection $s_{\alpha}$ with $\alpha \in I$. Obviously, when $I=\{\alpha_1,\alpha_2,\ldots,\alpha_{n-1}\}$, $W_I$ coincides with the whole Weyl group $W$ of $\sl_n$ which can be identified with $\frak S_{n}$.

 \subsubsection{} If $\chi$ is a nilpotent element of standard Levi form, then by Friedlander-Parshall and Jantzen's results (\cite{FP1}, \cite{Jan1} and \cite[Lemma D.1]{Jan2}), we have

 \begin{lemma}\label{lem: univ} Any irreducible $U_\chi(\ggg)$-module must be isomorphic to a unique simple quotient of $Z_\chi(\lambda)$ for some $\lambda\in  \mathscr{X}_p$, which is denoted by $L_\chi(\lambda)$.
\end{lemma}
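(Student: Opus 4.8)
The plan is to split Lemma~\ref{lem: univ} into two independent parts: first, that every simple $U_\chi(\ggg)$-module is a homomorphic image of some baby Verma module $Z_\chi(\lambda)$ with $\lambda\in\mathscr X_p$; and second, that for each such $\lambda$ the module $Z_\chi(\lambda)$ has a unique maximal submodule, so that its simple quotient $L_\chi(\lambda)$ is well defined.

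For the first part I would proceed in the standard way. Since $U_\chi(\ggg)$ is finite dimensional, a simple module $M$ is finite dimensional. Because $\chi$ is of standard Levi form associated with $I\subseteq\Pi$ (with $I=\Pi$ in the case \eqref{eq: regular} and $I=\Pi\setminus\{\alpha_{n-1}\}$ in the case \eqref{eq: subregular}), we have $-I\subseteq R^-$, hence $R^+\subseteq R\setminus(-I)$, and so $\chi(\bbb)=0$; in particular $U_\chi(\nnn^+)=U_0(\nnn^+)$, $U_\chi(\hhh)=U_0(\hhh)$ and $U_\chi(\bbb)=U_0(\bbb)$. As $\nnn^+$ is a nilpotent restricted Lie algebra, the augmentation ideal of $U_0(\nnn^+)$ is nilpotent, so $M^{\nnn^+}\ne 0$. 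This subspace is $\hhh$-stable, and $U_0(\hhh)$ being a commutative semisimple algebra, $M^{\nnn^+}$ is a direct sum of $\hhh$-weight spaces whose weights lie in $\mathscr X_p$. Picking a nonzero weight vector $v\in M^{\nnn^+}$ of weight $\lambda\in\mathscr X_p$, the line $\bbk v$ is a $U_\chi(\bbb)$-submodule of $M|_\bbb$ isomorphic to $\bbk_\lambda$; Frobenius reciprocity applied to the induction \eqref{eq: baby verma} then gives a nonzero $U_\chi(\ggg)$-homomorphism $Z_\chi(\lambda)\to M$, whose image, being a nonzero submodule of the simple module $M$, is all of $M$. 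Thus $M$ is a simple quotient of $Z_\chi(\lambda)$.

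For the second part I would invoke the cited work of Friedlander--Parshall and Jantzen. For a $p$-character of standard Levi form the baby Verma module $Z_\chi(\lambda)$ is known to have a unique maximal submodule (\cite{FP1}, \cite{Jan1}, \cite[Lemma~D.1]{Jan2}); this hypothesis is met here by construction, so I would just quote those results and define $L_\chi(\lambda)$ to be the resulting simple head. Combining the two parts, every simple $U_\chi(\ggg)$-module is isomorphic to $L_\chi(\lambda)$ for some $\lambda\in\mathscr X_p$, which is the assertion of the lemma.

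I expect the second part to be the real obstacle, and it is noticeably more delicate in the present setting $p\mid n$ than in the classical case $p\nmid n$. When $p\mid n$ the simple roots of $\sl_n$ are linearly dependent in $\hhh^*$ --- one has the relation $\sum_{i=1}^{n-1} i\,\alpha_i=0$ --- so the root lattice degenerates modulo $p$ and the $\lambda$-weight space of $Z_\chi(\lambda)$ is typically of large dimension; the usual argument producing a simple head from a one-dimensional highest-weight space therefore fails. The structural fact that rescues it is precisely the standard-Levi-form condition $\chi(X_{-\alpha_i})=1$ for $\alpha_i\in I$: since $X_{-\alpha_i}^{[p]}=0$, the defining relations of $U_\chi(\ggg)$ give $X_{-\alpha_i}^p=\chi(X_{-\alpha_i})^p=1$, so each such $X_{-\alpha_i}$ is a unit in $U_\chi(\ggg)$, and this is the input that underlies the Friedlander--Parshall--Jantzen description of the head of $Z_\chi(\lambda)$. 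Verifying, from this, that the sum of all proper submodules of $Z_\chi(\lambda)$ is again proper is the technical heart of the matter, and I would lean on \cite{FP1} and \cite[Lemma~D.1]{Jan2} for it rather than reprove it here.
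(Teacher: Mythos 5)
Your proposal is correct and is essentially the paper's approach: the paper offers no argument of its own for this lemma, stating it as a direct consequence of \cite{FP1}, \cite{Jan1} and \cite[Lemma D.1]{Jan2}, and you likewise defer to exactly those references for the only nontrivial point (the uniqueness of the simple head of $Z_\chi(\lambda)$), while writing out the routine existence half (nonzero $\nnn^+$-invariants since $U_\chi(\nnn^+)=U_0(\nnn^+)$ has nilpotent augmentation ideal, weight decomposition under the semisimple algebra $U_0(\hhh)$, then Frobenius reciprocity) that the paper leaves implicit. Your closing observation that the unique-maximal-submodule statement is the part genuinely sensitive to $p\mid n$ is also consistent with how the paper treats the matter.
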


\subsubsection{} In this paper, we study the irreducible representations of $U_\chi(\ggg)$ for $\ggg=\sl_n$, $p\mid n$ and $\chi$ is of the form as in \eqref{eq: regular} or in \eqref{eq: subregular}, where $W$ is the Weyl group of $\ggg$.

The following facts are fundamental and very important to the sequel.

\begin{lemma} (\cite[Propositions C.2 and D.3]{Jan2}) \label{lem: fundamental iso} Suppose $\chi \in \ggg^*$ is a nilpotent element of standard Levi form associated with $I$. If $\lambda,\mu\in \mathscr{X}_p$ with $\mu \in W_I.\lambda$, then  we have $U_\chi(\ggg)$-module homomorphisms $Z_\chi(\lambda)\cong Z_\chi(\mu)$ and $L_\chi(\lambda)\cong L_\chi(\mu)$, where $W$ is the Weyl group of $\ggg$.
\end{lemma}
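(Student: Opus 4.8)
The plan is to reduce the assertion to the case of a single simple reflection and then to an explicit rank-one computation inside the $\sl_2$-subalgebra attached to that simple root.

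\emph{Step 1: reduction to $\mu=s_\alpha.\lambda$ with $\alpha\in I$.} The group $W_I$ is generated by $\{s_\alpha\mid\alpha\in I\}$, and the dot action of $W$ preserves $\mathscr X_p$: indeed $w.\lambda=w(\lambda)+(w\rho-\rho)$, where $w$ permutes the $\varepsilon_i$ and $w\rho-\rho$ lies in the root lattice, so both the property of having $\mathbb F_p$-coefficients and the property of having coefficients summing to $0$ survive. Writing $w\in W_I$ with $w.\lambda=\mu$ as a product of simple reflections from $I$ produces a chain $\lambda=\lambda_0,\lambda_1,\dots,\lambda_r=\mu$ of weights in $\mathscr X_p$ whose consecutive terms are related by one $s_\alpha$, $\alpha\in I$. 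Composing isomorphisms (and passing the same chain to simple quotients afterwards), it suffices to prove $Z_\chi(\lambda)\cong Z_\chi(s_\alpha.\lambda)$ for one $\alpha\in I$.

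\emph{Step 2: the rank-one construction.} Fix $\alpha\in I$. The defining property $\chi(X_{-\alpha})=1$ forces, in $U_\chi(\ggg)$, the relation $X_{-\alpha}^p=X_{-\alpha}^{[p]}+\chi(X_{-\alpha})^p=1$, since the restricted power of the nilpotent elementary matrix $X_{-\alpha}$ vanishes; hence $X_{-\alpha}$ is invertible in $U_\chi(\ggg)$, with inverse $X_{-\alpha}^{p-1}$. Let $v_\lambda$ be the canonical generator of $Z_\chi(\lambda)$ and pick $k_0\in\{0,\dots,p-1\}$ whose residue mod $p$ equals $\langle\lambda+\rho,\alpha^\vee\rangle\in\mathbb F_p$. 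I claim $w:=X_{-\alpha}^{k_0}v_\lambda$ is a $U_\chi(\bbb)$-highest weight vector of weight $s_\alpha.\lambda$. Its $\hhh$-weight is $\lambda-k_0\alpha=s_\alpha.\lambda$ (as functionals on $\hhh$, since $k_0$ and $\langle\lambda+\rho,\alpha^\vee\rangle$ coincide in $\bbk$). It is killed by $X_\alpha$, because the usual $\sl_2$ relation $X_\alpha X_{-\alpha}^{k_0}v_\lambda=k_0(\langle\lambda,\alpha^\vee\rangle-k_0+1)X_{-\alpha}^{k_0-1}v_\lambda$ is valid for $k_0\le p-1$ and the scalar $\langle\lambda,\alpha^\vee\rangle-k_0+1=-\langle\rho,\alpha^\vee\rangle+1=0$ in $\bbk$. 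Finally it is killed by every $X_\beta$ with $\beta\in R^+\setminus\{\alpha\}$: commuting $X_\beta$ past the copies of $X_{-\alpha}$ produces only commutators $[X_\beta,X_{-\alpha}]\in\ggg_{\beta-\alpha}$ which, when nonzero, are again positive-root vectors (for $\alpha$ simple and $\beta\in R^+\setminus\{\alpha\}$, $\beta-\alpha$ is a positive root whenever it is a root), so an induction on the exponent of $X_{-\alpha}$ gives $X_\beta X_{-\alpha}^{k_0}v_\lambda=0$; equivalently, the nilradical of the minimal parabolic $\bbb+\ggg_{-\alpha}$ acts by zero on $U_\chi(\ggg_{-\alpha})v_\lambda$.

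\emph{Step 3: conclusion.} By Frobenius reciprocity, $w$ determines a $U_\chi(\ggg)$-homomorphism $\varphi\colon Z_\chi(s_\alpha.\lambda)\to Z_\chi(\lambda)$ sending the canonical generator to $w$. Its image $U_\chi(\ggg)w$ contains $X_{-\alpha}^{-1}w=X_{-\alpha}^{k_0-1}v_\lambda$, and, iterating with the invertible $X_{-\alpha}$, contains $X_{-\alpha}^mv_\lambda$ for all $m$, in particular $v_\lambda$; as $v_\lambda$ generates $Z_\chi(\lambda)$, $\varphi$ is surjective, and since $\dim Z_\chi(s_\alpha.\lambda)=\dim Z_\chi(\lambda)=p^{|R^+|}$ it is an isomorphism. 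Finally, by Lemma~\ref{lem: univ} each baby Verma module has a unique maximal submodule, with simple quotient $L_\chi(\cdot)$; the isomorphism $\varphi$ carries the maximal submodule of $Z_\chi(s_\alpha.\lambda)$ onto that of $Z_\chi(\lambda)$ and hence descends to $L_\chi(\lambda)\cong L_\chi(s_\alpha.\lambda)$. Combined with Step~1 this gives both claims. I expect the genuinely delicate point to be the surjectivity of $\varphi$ together with the verification that $w$ is annihilated by \emph{all} of $\nnn^+$, not merely by $X_\alpha$; both rest on $\chi(X_{-\alpha})\ne0$ — the hallmark of standard Levi form — which makes $X_{-\alpha}$ invertible, and on the commutation (equivalently, minimal-parabolic) bookkeeping for the roots $\beta\ne\alpha$. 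For a general nilpotent $\chi$ the analogous homomorphism need not be surjective.
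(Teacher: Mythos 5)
Your argument is correct, and it is essentially the proof of the cited result: the paper itself gives no proof of this lemma, deferring to \cite[Propositions C.2 and D.3]{Jan2}, and the argument there is exactly your reduction to a single simple reflection $s_\alpha$ with $\alpha\in I$ followed by the observation that $\chi(X_{-\alpha})=1$ forces $X_{-\alpha}^p=1$ in $U_\chi(\ggg)$, so that the highest-weight vector $X_{-\alpha}^{k_0}v_\lambda$ of weight $s_\alpha.\lambda$ generates all of $Z_\chi(\lambda)$. All the supporting details you supply (the vanishing of the $\sl_2$ scalar, the stability of the nilradical of $\bbb\oplus\ggg_{-\alpha}$ under $\ad X_{-\alpha}$, and the dimension count $p^{|R^+|}$) check out.
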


\begin{lemma}  \label{lem: fundamental iso 2} Suppose $\ggg=\frak{gl}_n$ and $\chi \in \frak{gl}^*_n$ is a nilpotent element of standard Levi form associated with $I$. Then for $\lambda,\mu\in \mathscr{X}_p$, $L_\chi(\lambda)\cong L_\chi(\mu)$ if and only if $\mu \in W_I.\lambda$, where $W$ is the Weyl group of $\ggg$.
\end{lemma}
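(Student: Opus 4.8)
The forward implication is exactly Lemma~\ref{lem: fundamental iso}, so I would concentrate on the converse: assuming $L_\chi(\lambda)\cong L_\chi(\mu)$, deduce $\mu\in W_I\cdot\lambda$. The plan is to attach to each simple $U_\chi(\ggg)$-module a \emph{functorial} module over the Levi $\lll_I$ from which the $W_I$-orbit of $\lambda$ can be recovered. Take $\chi$ to be the standard representative attached to $I$, so that $\chi$ is supported on $\{X_{-\alpha}\mid\alpha\in I\}$, let $\ppp_I=\lll_I\oplus\uuu_I\supseteq\bbb$ be the corresponding parabolic with nilradical $\uuu_I$, and let $\uuu_I^-$ be the nilradical of the opposite parabolic, so $\ggg=\uuu_I^-\oplus\lll_I\oplus\uuu_I$. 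Since $\uuu_I$ and $\uuu_I^-$ are spanned by root vectors $X_\alpha$ with $\alpha\notin-I$, we have $\chi(\uuu_I)=\chi(\uuu_I^-)=0$, and $\chi|_{\lll_I}$ is of standard Levi form for $\lll_I$ with respect to its \emph{full} simple system $I$, i.e.\ it is regular nilpotent in $\lll_I$. Writing $Z^{\lll_I}(\lambda)$ for the baby Verma module of $\lll_I$, it is therefore simple (see e.g.\ \cite{FP1,Jan1}); inflating it $\uuu_I$-trivially to $\ppp_I$, one obtains a simple $U_\chi(\ppp_I)$-module $\widetilde Z^{\lll_I}(\lambda)$, and by transitivity of induction $Z_\chi(\lambda)\cong U_\chi(\ggg)\otimes_{U_\chi(\ppp_I)}\widetilde Z^{\lll_I}(\lambda)$.

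The invariant I would use is the $\uuu_I^-$-coinvariants functor $M\mapsto M_{\uuu_I^-}:=M/\uuu_I^-M$. Because $[\lll_I,\uuu_I^-]\subseteq\uuu_I^-$, this takes $U_\chi(\ggg)$-modules to $U_\chi(\lll_I)$-modules; it is right exact, and, being defined purely from the fixed subalgebra $\uuu_I^-$, it carries isomorphic $\ggg$-modules to isomorphic $\lll_I$-modules. Moreover $M_{\uuu_I^-}\ne0$ whenever $M\ne0$, since $\chi(\uuu_I^-)=0$ makes $U_\chi(\uuu_I^-)=U_0(\uuu_I^-)$ a (restricted) local ring acting on $M$. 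Using the PBW identification $U_\chi(\ggg)\cong U_\chi(\uuu_I^-)\otimes_\bbk U_\chi(\ppp_I)$ of right $U_\chi(\ppp_I)$-modules, one gets $Z_\chi(\lambda)\cong U_\chi(\uuu_I^-)\otimes_\bbk\widetilde Z^{\lll_I}(\lambda)$ as left $U_\chi(\uuu_I^-)$-modules, and, after identifying the $\lll_I$-action on the degree-zero part, $Z_\chi(\lambda)_{\uuu_I^-}\cong Z^{\lll_I}(\lambda)$ as $\lll_I$-modules. Feeding a surjection $Z_\chi(\lambda)\twoheadrightarrow L_\chi(\lambda)$ through the functor and using simplicity of $Z^{\lll_I}(\lambda)$ then yields $L_\chi(\lambda)_{\uuu_I^-}\cong Z^{\lll_I}(\lambda)$.

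It then remains to observe that $Z^{\lll_I}(\lambda)$ determines $W_I\cdot\lambda$. Here $\lll_I\cong\gl_{m_1}\oplus\cdots\oplus\gl_{m_r}$ is block diagonal (the $m_j$ being the sizes of the consecutive blocks cut out by $I$), $U_\chi(\lll_I)\cong\bigotimes_j U_\chi(\gl_{m_j})$, and $Z^{\lll_I}(\lambda)$ is the outer tensor product of the block baby Vermas. The Gelfand invariants of each $\gl_{m_j}$ (coefficients of the characteristic polynomial of the matrix of generators) are central and act on the corresponding block baby Verma by the elementary symmetric functions of the $\rho$-shifted weight; since a monic polynomial over $\bbk$ is determined by its roots, this recovers, for every $j$, the multiset of coordinates of $\lambda+\rho$ inside the $j$-th block. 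As $W_I=\frak S_{m_1}\times\cdots\times\frak S_{m_r}$ acts (via the dot action) by permuting coordinates of $\lambda+\rho$ within each block, this multiset data is a complete invariant of $W_I\cdot\lambda$. Hence $L_\chi(\lambda)\cong L_\chi(\mu)$ forces $Z^{\lll_I}(\lambda)\cong Z^{\lll_I}(\mu)$ and therefore $\mu\in W_I\cdot\lambda$. (When $I=\{\alpha_1,\dots,\alpha_{n-1}\}$ this degenerates to $\lll_I=\ggg$, $\uuu_I^-=0$, $L_\chi(\lambda)=Z_\chi(\lambda)$, and the argument reduces to: isomorphic baby Vermas have equal central character, hence $\mu\in W\cdot\lambda=W_I\cdot\lambda$.)

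The step demanding the most care is the identification $Z_\chi(\lambda)_{\uuu_I^-}\cong Z^{\lll_I}(\lambda)$ \emph{as $\lll_I$-modules}: one must verify that, under the PBW decomposition, the $\lll_I$-action preserves the $\uuu_I^-$-degree, so that the degree-zero piece $\widetilde Z^{\lll_I}(\lambda)$ is an $\lll_I$-submodule of $Z_\chi(\lambda)/\uuu_I^-Z_\chi(\lambda)$ carrying exactly the original action. Beyond that, the argument leans on the external fact that a regular-nilpotent standard-Levi-form baby Verma is simple; should one wish to avoid invoking it, it suffices to know that $Z^{\lll_I}(\lambda)$ has simple head $L^{\lll_I}(\lambda)$, since central elements act on $L^{\lll_I}(\lambda)$ by the same scalars as on $Z^{\lll_I}(\lambda)$, so that $L_\chi(\lambda)_{\uuu_I^-}$ has head $L^{\lll_I}(\lambda)$ and the same central-character computation goes through.
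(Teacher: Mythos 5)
The paper does not actually prove this lemma: like Lemma~\ref{lem: fundamental iso}, it is recorded as a known fact (for $\gl_n$ it is Jantzen's classification of simple modules with a $p$-character of standard Levi form, cf.\ \cite{Jan1}, \cite{Jan2}), so there is no in-text argument to compare yours against. Your write-up supplies a genuine proof, and it is sound. The reduction $Z_\chi(\lambda)\cong U_\chi(\ggg)\otimes_{U_\chi(\ppp_I)}\widetilde Z^{\lll_I}(\lambda)$, the identification of the $\uuu_I^-$-coinvariants of $Z_\chi(\lambda)$ (and hence, by right exactness, simplicity of the Levi baby Verma, and Nakayama for the local algebra $U_0(\uuu_I^-)$, of $L_\chi(\lambda)$) with $Z^{\lll_I}(\lambda)$, and the recovery of the blockwise multiset of coordinates of $\lambda+\rho$ from central characters are all correct; this is essentially the standard route in the literature (Jantzen passes between $L_\chi(\lambda)$ and the simple $\lll_I$-module via $\uuu_I$-invariants rather than $\uuu_I^-$-coinvariants, but the content is the same). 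The only external input you need beyond Lemma~\ref{lem: fundamental iso} is the simplicity of the regular-nilpotent baby Verma for $\gl_m$, which the paper itself quotes as Lemma~\ref{lem: 2.4.11} and which holds for all $p$.

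One point deserves more care than you give it: in characteristic $p$ the Gelfand invariants $\mathrm{tr}(E^k)$ only produce the \emph{power sums} of the shifted coordinates, and power sums do not determine a multiset of size $\geq p$ over $\mathbb{F}_p$ (Newton's identities degenerate), so "traces of powers" would not suffice here. What you need is that the \emph{elementary} symmetric functions of the $\rho$-shifted coordinates are realized by central elements, and the correct device is the Capelli (column) determinant $\mathrm{cdet}\bigl(t+E-\mathrm{diag}(0,1,\dots,m-1)\bigr)$, whose coefficients are central over $\mathbb{Z}$ and hence in any characteristic, and whose Harish--Chandra image is $\prod_i(t+a_i)$ with $\{a_i\}$ the shifted coordinates. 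Since you explicitly say "elementary symmetric functions," you clearly have the right invariants in mind, but the parenthetical "characteristic polynomial of the matrix of generators" should be replaced by the shifted column determinant to make the centrality claim literally true. With that adjustment the argument is complete.
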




\subsection{} In the case $\ggg=\sl_n$ with $p\nmid n$ or $\ggg=\gl_n$ with $p$ being any prime number, there exists a non-degenerate bilinear $G$-invariant form on $\ggg$ for $G=\text{GL}_n$ in characteristic $p$. In this case, $\chi$-reduced irreducible modules of $\ggg$ can be  precisely described for $\chi$ being regular nilpotent (see \cite{FP1}) and for $\chi$ being subregular nilpotent (see \cite{Jan3}). However, when $p\mid n$ the arguments in \cite{FP1} and in \cite{Jan3} are not valid ever for $\ggg=\sl_n$. Actually, it has been an open problem how to parameterize the isomorphism classes of irreducibles and how to construct the irreducible modules for $\ggg=\sl_n$ with $p\mid n$.

In this note, we investigate this problem when $\chi$ is regular nilpotent or subregular nilpotent.
In this special case, taking aid of the known results on $\gl_n$ and $\sl_{n+1}$ with $p \mid n$ and there by $p\nmid n+1$,  we finally determine the iso-classes of irreducible modules and construct the irreducible modules for $\sl_n$ with $p\mid n$  when $\chi$ is regular or subregular. In the following, we introduce  the main results and the structure of the present paper.

\subsection{}\label{FP} One source of this paper is partially from the third author's PhD thesis in 2016 (see \cite{Wen}), which was to describe  composition factors of baby Verma modules for $\sl_n$ when $p\mid n$ and $p$-character $\chi$ is regular nilpotent. Recall that when $p\nmid n$, it is a standard result that $Z_\chi(\lambda)=L_\chi(\lambda)$ (see \cite{FP1} and \cite{Jan1}) for $\ggg=\sl_n$ or $\ggg=\gl_n$. However, when $p\mid n$, it remains true only for $\gl_n$ because in this case there remains a non-degenerate bilinear form on $\gl_n$ while there is no longer non-degenerate bilinear form on $\sl_n$.

The main idea of deciding the isomorphism classes of irreducible $\sl_n$-module in this case is to analysis when $\res_{\chi}Z(\lambda)$ is irreducible (the restriction to $\sl_n$), and what the composition series we have in $\res_{\chi}Z(\lambda)$. Note that as a corollary to Lemma \ref{lem: fundamental iso}, $Z_\chi(\lambda)\cong Z_\chi(w.\lambda)$ for any $w\in W$  when $\chi$ is regular of form \eqref{eq: regular}.

Summing up, it can be understood  via the action of the Weyl group as below.
\begin{thm}\label{thm: 2.4.12} Suppose that $\chi\in \frak {sl}_n^*$ is regular nilpotent of form \eqref{eq: regular} and $\lambda \in \mathscr{X}_p$, then $Z_\chi(\lambda)$ is an $U_\chi(\sl_n)$-module.
\begin{itemize}
\item[(1)] If $\lambda \in W.0$, then $Z_\chi(\lambda)$ is reducible, admitting a composition series of length $p$. All composition factors are isomorphic to its unique simple quotient $L_\chi(\lambda)$.
\item[(2)] If $\lambda\notin W.0$, then  $Z_\chi(\lambda)$ itself is an irreducible $U_\chi(\frak {sl}_n)$-module.
    \end{itemize}
\end{thm}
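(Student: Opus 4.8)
The plan is to transport the problem to $\gl_n$, where $\chi$-reduced baby Verma modules are simple, and then to run a Clifford-theory argument for the codimension-one restricted ideal $\sl_n\subset\gl_n$.

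Since $W_I=W$ here, Lemma~\ref{lem: fundamental iso} gives $Z_\chi(\lambda)\cong Z_\chi(w.\lambda)$ and $L_\chi(\lambda)\cong L_\chi(w.\lambda)$ for all $w\in W$, so in part~(1) we may take $\lambda=0$. Let $\tilde\chi\in\gl_n^*$ be the regular nilpotent of standard Levi form with $\tilde\chi|_{\sl_n}=\chi$ (concretely $\tilde\chi(X_{-\alpha_i})=1$ for $1\le i\le n-1$ and $\tilde\chi$ vanishes elsewhere), and pick $\tilde\lambda\in\mathscr{X}_p(\gl_n)$ restricting to $\lambda$ on $\hhh$; this is possible since $\mathscr{X}_p(\gl_n)\twoheadrightarrow\mathscr{X}_p(\sl_n)$. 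As $\gl_n$ admits a non-degenerate invariant form, every $Z_{\tilde\chi}^{\gl_n}(\tilde\mu)$ is simple and equals $L_{\tilde\chi}^{\gl_n}(\tilde\mu)$ (\cite{FP1}). Writing $\gl_n=\sl_n\oplus\bbk E_{11}$ with $E_{11}$ in the Borel, $E_{11}$ acts on the highest weight vector $v$ of $Z_{\tilde\chi}^{\gl_n}(\tilde\lambda)$ by the scalar $\tilde\lambda(E_{11})$, whence $U(\sl_n)v=U(\gl_n)v$; so $\res_{\sl_n}Z_{\tilde\chi}^{\gl_n}(\tilde\lambda)$ is a cyclic $U_\chi(\sl_n)$-module of highest weight $\lambda$, hence a quotient of $Z_\chi(\lambda)$, and since both have dimension $p^{n(n-1)/2}$ they coincide: $Z_\chi(\lambda)\cong\res_{\sl_n}L_{\tilde\chi}^{\gl_n}(\tilde\lambda)$. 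The theorem is thereby reduced to analysing this restriction.

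For the Clifford data: given $a\in\bbf_p$ and a $U_{\tilde\chi}(\gl_n)$-module $M$, let $M^{(a)}$ be $M$ with each $x\in\gl_n$ acting by $x+a\,\mathrm{tr}(x)\,\id$; one checks, using $\mathrm{tr}(x^{[p]})=\mathrm{tr}(x)^p$ and $a^p=a$, that $M^{(a)}$ is again a $U_{\tilde\chi}(\gl_n)$-module, that $\res_{\sl_n}M^{(a)}=\res_{\sl_n}M$, and that $L_{\tilde\chi}^{\gl_n}(\tilde\mu)^{(a)}\cong L_{\tilde\chi}^{\gl_n}(\tilde\mu+a\tau)$ with $\tau=\varepsilon_1+\cdots+\varepsilon_n$. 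Moreover $U_{\tilde\chi}(\gl_n)$ is free of rank $p$ over $U_\chi(\sl_n)$, and the image $\bar E_{11}$ of $E_{11}$ satisfies $\bar E_{11}^{\,p}=\bar E_{11}$ (because $E_{11}^{[p]}=E_{11}$ and $\tilde\chi(E_{11})=0$); hence $\bar E_{11}$ acts semisimply with eigenvalues in $\bbf_p$ on every module, $\ad \bar E_{11}$ induces an $\bbf_p$-grading $U_\chi(\sl_n)=\bigoplus_{j\in\bbf_p}U_\chi(\sl_n)_j$, and the $\bar E_{11}$-eigenspace decomposition of a $U_{\tilde\chi}(\gl_n)$-module is a grading over this graded algebra. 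Working through this grading one obtains the Clifford dichotomy: for a simple $U_{\tilde\chi}(\gl_n)$-module $M$, the set $\{a\in\bbf_p:M^{(a)}\cong M\}$ is a subgroup of $\bbf_p$, and either it equals $\{0\}$, in which case $\res_{\sl_n}M$ is simple, or it equals $\bbf_p$, in which case $\res_{\sl_n}M$ has composition length $p$ with all composition factors isomorphic to one simple $U_\chi(\sl_n)$-module.

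Finally, the dichotomy is made explicit. By Lemma~\ref{lem: fundamental iso 2} for $\gl_n$, $L_{\tilde\chi}^{\gl_n}(\tilde\lambda+a\tau)\cong L_{\tilde\chi}^{\gl_n}(\tilde\lambda)$ iff $\tilde\lambda+a\tau\in W.\tilde\lambda$; since $\tau$ is $W$-fixed, $w.(\tilde\mu+a\tau)=w.\tilde\mu+a\tau$, so the subgroup above equals $\{a:\tilde\lambda+a\tau\in W.\tilde\lambda\}$, which is all of $\bbf_p$ iff $\tilde\lambda+\tau\in W.\tilde\lambda$, iff the multiset of $\bbf_p$-coordinates of $\tilde\lambda+\rho$ is stable under adding $1$, iff each residue modulo $p$ occurs exactly $n/p$ times among those coordinates; and as the coordinates of $\rho$ form the multiset $\{0,1,\dots,n-1\}\bmod p$, which (since $p\mid n$) realises each residue exactly $n/p$ times, this last condition says precisely that $\tilde\lambda+\rho$ is $W$-conjugate to $\rho$, i.e. $\tilde\lambda\in W.0$, equivalently $\lambda\in W.0$. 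Hence if $\lambda\notin W.0$ the subgroup is $\{0\}$ and $Z_\chi(\lambda)=\res_{\sl_n}L_{\tilde\chi}^{\gl_n}(\tilde\lambda)$ is simple, giving~(2); and if $\lambda\in W.0$ (in particular $\lambda=0$) the subgroup is $\bbf_p$, so $Z_\chi(\lambda)$ has composition length $p$ with all factors isomorphic, and these must be $L_\chi(\lambda)$ since by Lemma~\ref{lem: univ} it is the unique simple quotient of $Z_\chi(\lambda)$, giving~(1). The main obstacle is the Clifford dichotomy of the third paragraph---proving that a simple $U_{\tilde\chi}(\gl_n)$-module restricts either to a simple module or to a length-$p$ module with a single composition factor, with nothing in between---which is where the graded structure of $U_{\tilde\chi}(\gl_n)$ over $U_\chi(\sl_n)$ and a close analysis of the graded pieces are needed.
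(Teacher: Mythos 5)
Your overall strategy coincides with the paper's: pass to $\gl_n$, where the baby Verma module $E=Z_{\tilde\chi}(\tilde\lambda)$ is simple (the paper's Lemma \ref{lem: 2.4.11}), identify its restriction with $Z_\chi(\lambda)$ (Lemma \ref{lem: 2.4.10}(3)), observe that your twist $M^{(a)}$ is exactly the paper's $M\otimes\bbk_a$ with $L_{\tilde\chi}(\tilde\mu)\otimes\bbk_a\cong L_{\tilde\chi}(\tilde\mu+a\varpi_n)$ (Lemma \ref{lem: 2.3.2}), and characterize $\{a\in\mathbb{F}_p: E\otimes\bbk_a\cong E\}=\mathbb{F}_p$ combinatorially as $\lambda\in W.0$ (Proposition \ref{prop: 2.3.3}; your multiset argument is a cleaner version of the paper's cycle-decomposition computation). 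All of that is correct and matches the paper.

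The genuine gap is the ``Clifford dichotomy'' itself, which you assert and explicitly defer (``working through this grading one obtains\dots''), and which you yourself flag as the main obstacle. Nothing in your third paragraph actually shows that the restriction of $E$ is simple when the stabilizer is trivial, nor that the length is exactly $p$ (rather than, say, $1$) with all composition factors isomorphic when the stabilizer is all of $\mathbb{F}_p$. The paper closes this with two concrete ingredients absent from your proposal: (a) Proposition \ref{prop: 2.4.1} and Corollary \ref{cor: 2.4.3}, which use the filtration $M_i=\sum_{0\le j\le i}h^j\otimes M$ of $\res_{\chi}\ind_{\chi}M$ together with the adjunction \eqref{eq: 2.1} to prove that \emph{all} composition factors of the restriction of a simple $U_{\tilde\chi}(\gl_n)$-module are isomorphic to one simple $L$; and (b) the multiplicity computation $[\res E:L]=\dim\Hom_{\sl_n}(\res P_E,\res E)=\sum_{a}\dim\Hom_{\gl_n}(P_E,E\otimes\bbk_a)=\#\{a: E\otimes\bbk_a\cong E\}$, which pins the length to $1$ or $p$. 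Step (b) in turn requires knowing that $\res P_E$ is the projective cover of $L$, i.e.\ that restriction preserves indecomposability and projectivity; the paper obtains this from Farnsteiner's theorem on $\mathbb{F}_p$-graded local rings applied to $\text{End}_{\sl_n}(M)=\bigoplus_a\Hom_{\gl_n}(M\otimes\bbk_a,M)$ (Lemmas \ref{lem: 2.4.4}--\ref{lem: 2.4.5}, Propositions \ref{prop: 2.4.6}--\ref{prop: 2.4.7}). This is presumably the ``close analysis of the graded pieces'' you have in mind, but as written your proposal does not contain it, and without it neither half of the dichotomy is established.
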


The proof of the above theorem will be given in \S\ref{sec: proof of thm 0.2}.  It is worth mentioning that Jantzen independently dealt it by different method the restriction from $\gl_n$ to $\sl_n$ in  generality  (see \cite{Jan4}).

\subsection{} The following theorem further indicates the construction of irreducible module $L_\chi(\lambda)$ when $\lambda\in W.0$  and $\chi\in \frak {sl}_n^*$ is regular nilpotent of form \eqref{eq: regular}.

\begin{thm}\label{Theorem 2.5.5} Let $\ggg=\frak {sl}_{n}$ with $p\mid n$ and $\widetilde{\ggg}=\frak {sl}_{n+1}$. Suppose $\chi\in\ggg^*$ is regular of the form \eqref{eq: regular}. Set $\theta_i=\sum_{k=i}^n\alpha_k$ for $k=1,\ldots,n$, and $u=\prod_{i=1}^{n}X_{-\theta_i}^{p-1}\in U_{\chi}(\widetilde\ggg)$ where $\chi$ is naturally regarded an element of ${\widetilde\ggg}^*$. Then
\begin{itemize}
\item[(1)] $L_\chi(0) \cong u \widetilde{L_\chi}(0)$ as $U_{\chi}(\ggg)$-modules, where $\widetilde{L_\chi}(0)$ is the irreducible module of weight $0$ over $U_\chi(\widetilde\ggg)$ which is precisely described.
\item[(2)] $L_\chi(\lambda)\cong L_\chi(0)$ as $U_{\chi}(\ggg)$-modules for all $\lambda\in W.0$, where $W$ is the Weyl group of $\ggg$.
\end{itemize}
\end{thm}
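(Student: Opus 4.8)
The plan is: part (2) is immediate from Lemma~\ref{lem: fundamental iso}, since $\chi$ is of standard Levi form associated with $I=\{\alpha_1,\dots,\alpha_{n-1}\}$, and this $I$ generates the full Weyl group ($W_I=W$), so $W.0=W_I.0$ and hence $L_\chi(\lambda)\cong L_\chi(0)$ for every $\lambda\in W.0$. For part (1) the strategy is to transport structure along the inclusion $\ggg=\sl_n\hookrightarrow\widetilde\ggg=\sl_{n+1}$, exploiting that the subregular simple module $\widetilde{L_\chi}(0)$ of $\sl_{n+1}$ is already understood because $p\nmid n+1$.

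First I would realise $\ggg$ as the traceless part of $\widetilde\ggg_0$ in the grading $\widetilde\ggg=\uuu^-\oplus\widetilde\ggg_0\oplus\uuu^+$ given by the last coordinate of $\sl_{n+1}$, where $\widetilde\ggg_0\cong\gl_n$, while $\uuu^-=\sum_{i=1}^n\bbk X_{-\theta_i}$ and $\uuu^+=\sum_{i=1}^n\bbk X_{\theta_i}$ are abelian and $\{\theta_1,\dots,\theta_n\}=R^+(\widetilde\ggg)\setminus R^+(\ggg)$. Extended by zero, $\chi$ becomes the standard-Levi-form (subregular) $p$-character of $\widetilde\ggg$ for the same $I$, restricting on $\ggg$ to the original $\chi$; moreover $X_{-\theta_i}^p=0$ in $U_\chi(\widetilde\ggg)$ (since $\chi|_{\uuu^-}=0$ and $X_{-\theta_i}^{[p]}=0$), so $U_\chi(\uuu^-)=U_0(\uuu^-)\cong\bbk[X_{-\theta_1},\dots,X_{-\theta_n}]/(X_{-\theta_i}^{p})$ is a local symmetric algebra of dimension $p^n$ whose one-dimensional socle (top graded component) is $\bbk u$. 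The two facts to establish are (a) $u\cdot U_0(\uuu^-)\subseteq\bbk u$ inside $U_\chi(\widetilde\ggg)$, and (b) $[X,u]\in\bbk u$ for $X\in\widetilde\ggg_0$, with $[X,u]=0$ for all $X\in\ggg$. Both (a) and the first half of (b) follow from a degree count: the relevant elements have $\uuu^-$-degree $\ge n(p-1)$, and modulo the relations $X_{-\theta_i}^p=0$ the only surviving monomial of that degree is $u$ itself; then $X\mapsto c(X)$, defined by $[X,u]=c(X)u$, is a linear functional on $\widetilde\ggg_0$ which by the Jacobi identity annihilates $[\widetilde\ggg_0,\widetilde\ggg_0]\supseteq[\ggg,\ggg]=\ggg$, as $\sl_n$ is perfect.

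Granting (a) and (b): in $Z_\chi^{\widetilde\ggg}(0)=U_\chi(\widetilde\nnn^-)\otimes v_0$ the vector $u\otimes v_0$ is a PBW basis element, it is killed by $\nnn^+$ and by $\hhh$ (because $X\otimes v_0=1\otimes Xv_0=0$ for $X\in\widetilde\nnn^+$, the weight is $0$, and $[\ggg,u]=0$), and using $[\nnn^-,u]=0$ together with $u\,U_0(\uuu^-)\subseteq\bbk u$ one finds $u\,Z_\chi^{\widetilde\ggg}(0)=U_\chi(\nnn^-)(u\otimes v_0)$, which is free of rank $1$ over $U_\chi(\nnn^-)$; hence $u\,Z_\chi^{\widetilde\ggg}(0)\cong Z_\chi(0)$ as $U_\chi(\ggg)$-modules, and in particular $\dim_\bbk u\,Z_\chi^{\widetilde\ggg}(0)=p^{\,n(n-1)/2}$. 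Pushing forward along the surjection $Z_\chi^{\widetilde\ggg}(0)\twoheadrightarrow\widetilde{L_\chi}(0)$ gives a surjection $Z_\chi(0)\cong u\,Z_\chi^{\widetilde\ggg}(0)\twoheadrightarrow u\,\widetilde{L_\chi}(0)$ of $U_\chi(\ggg)$-modules. By Theorem~\ref{thm: 2.4.12}(1) every composition factor of $Z_\chi(0)$, hence of any nonzero quotient of it, is $\cong L_\chi(0)$, and $\dim_\bbk L_\chi(0)=p^{-1}\dim_\bbk Z_\chi(0)=p^{\,n(n-1)/2-1}$. Thus the theorem reduces to the single assertion $\dim_\bbk u\,\widetilde{L_\chi}(0)=p^{\,n(n-1)/2-1}$ (which in particular forces $u\,\widetilde{L_\chi}(0)\ne 0$); once this is known, $u\,\widetilde{L_\chi}(0)$ has exactly one composition factor, so $u\,\widetilde{L_\chi}(0)\cong L_\chi(0)$, proving (1), and (2) then records the remaining isomorphisms $L_\chi(\lambda)\cong L_\chi(0)$ for $\lambda\in W.0$.

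This remaining dimension identity is the heart of the matter, and it is exactly where the explicit description of $\widetilde{L_\chi}(0)$ as a subregular simple module of $\sl_{n+1}$ has to be used, rather than purely formal arguments. Concretely, I would show that the image $u\,\widetilde{L_\chi}(0)$ has dimension $\dim_\bbk\widetilde{L_\chi}(0)/p^{\,n}$ — for instance by proving that $\widetilde{L_\chi}(0)$ is free over the truncated polynomial algebra $U_0(\uuu^-)$, whose socle is $\bbk u$ — and then match this against the known value of $\dim_\bbk\widetilde{L_\chi}(0)$, obtaining $p^{\,n(n-1)/2-1}$; in particular this says that the PBW vector $u\otimes v_0$ does not lie in the maximal submodule of $Z_\chi^{\widetilde\ggg}(0)$. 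I expect this freeness / non-vanishing input about $\widetilde{L_\chi}(0)$ to be the main obstacle; everything else is the bookkeeping described above.
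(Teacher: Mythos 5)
Your part (2) is exactly the paper's argument (Lemma \ref{lem: fundamental iso} with $W_I=W$), and the formal bookkeeping in part (1) is sound: the facts $u\,U_0(\frak u_I^-)\subseteq\bbk u$, $[X,u]=c(X)u$ with $c$ vanishing on $\ggg\subseteq[\widetilde\ggg_0,\widetilde\ggg_0]$, the resulting identification of $u\,Z^{\widetilde\ggg}_\chi(0)$ with $Z_\chi(0)$, and the surjection $Z_\chi(0)\twoheadrightarrow u\,\widetilde{L_\chi}(0)$ all check out, and together with Theorem \ref{thm: 2.4.12}(1) they correctly reduce the theorem to the single claim $\dim_\bbk u\,\widetilde{L_\chi}(0)=p^{N-1}$. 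But you then defer precisely that claim (``I would show that $\widetilde{L_\chi}(0)$ is free over $U_0(\frak u_I^-)$ \dots I expect this to be the main obstacle''), and this is not a routine verification: it is the actual content of the theorem. Note in particular that you cannot obtain the freeness from the standard criterion (Lemma \ref{lem: 1.2.1}): one has $X_{-\theta_n}=X_{-\alpha_n}=E_{n+1,n}$ and $\chi([X_{-\alpha_n},\widetilde\ggg])=0$, so $\frak u_I^-\cap\widetilde\ggg_\chi\neq 0$ and the hypothesis of that lemma fails for $\frak m=\frak u_I^-$. So a genuine new input is needed, and your proposal does not supply it.

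The paper supplies it by a construction your outline does not contain: the parabolically induced module $\widetilde{L_\chi}(0,\frak p_I)=U_\chi(\widetilde\ggg)\otimes_{U_0(\frak p_I)}L_\chi(0)$, where $\frak p_I=\ggg\oplus\bbk H_n\oplus\frak u_I$ and $L_\chi(0)$ is inflated to $\frak p_I$ with trivial $\frak u_I$-action. Its dimension is $p^{n}\cdot p^{N-1}=p^{\widetilde N-1}$ (using $\dim L_\chi(0)=p^{N-1}$ from Theorem \ref{thm: 2.4.12}), and the divisibility criterion for subregular nilpotent $U_\chi(\sl_{n+1})$-modules (Lemma \ref{2.26}, available because $p\nmid n+1$) forces any nonzero module of that dimension to be simple; since $\widetilde{L_\chi}(0,\frak p_I)$ is visibly a quotient of $\widetilde{Z_\chi}(0)$, it must coincide with $\widetilde{L_\chi}(0)$ (Lemma \ref{lem: 2.5.2}). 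This realization makes $\widetilde{L_\chi}(0)$ manifestly free over $U_0(\frak u_I^-)$ and yields $u\,\widetilde{L_\chi}(0)=u\otimes L_\chi(0)\cong L_\chi(0)$ at once. In short: your reduction is compatible with, and close in spirit to, the paper's proof, but the step you flag as the main obstacle is where the theorem lives, and it is missing.
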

The proof will be given in \S\ref{sec: proof of thm 0.3}.

\subsection{} For the subregular nilpotent cases, there is a basic  observation that $\gl_{n}$ in the case when $p\mid n$ satisfies the existence condition of a non-degenerate bilinear form. This is a start-point of our investigation. Combining with Jantzen's arguments in \cite{Jan4}, we have the following result.

\begin{thm}\label{thm:subregular} Assume that $\chi\in \frak {sl}_n^*$ is a subregular nilpotent $p$-character of form \eqref{eq: subregular}.  Then for any $\lambda\in \mathscr{X}_p$, there are  correspondingly $\lambda_i \in \mathscr{X}_p$ with $i=0,1,\ldots,n-1$, such that for all $0 \leq i \leq n-1$ the following two statements hold:
\begin{itemize}

 \item[(1)]    $L_\chi(\lambda_{i})$ are irreducible for all $\lambda_i$, spanned by a basis in \eqref{eq: subregular basis}. It satisfies $\text{dim}L_\chi(\lambda_{i})=p^{N-1}r_{n-1-i}$ (where $N=\abs{R^+}$) and $L_\chi(\lambda_{i}) \cong Z_\chi(\lambda_{i},\alpha_{n-1})$ as $U_{{\chi}}(\frak {sl}_n)$-modules.
\item[(2)] $Z_\chi(\lambda_{i})$ just has composition factors of the form $L_\chi(\lambda_{j})(0 \leq j \leq n-1)$ satisfying $[Z_\chi(\lambda_{i}):L_\chi(\lambda_{j})]=N_j$ where $N_j$ is defined in Corollary \ref{cor: 3.1.5}.
         \end{itemize}
\end{thm}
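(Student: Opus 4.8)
The plan is to transport the known description of subregular nilpotent representations of $\gl_n$ (available since $p\mid n$ still gives a non-degenerate invariant form on $\gl_n$) down to $\sl_n$ via restriction, exactly paralleling the strategy used for the regular case in Theorems \ref{thm: 2.4.12} and \ref{Theorem 2.5.5}. First I would fix $\chi$ of the standard Levi form \eqref{eq: subregular} associated with $I=\{\alpha_1,\ldots,\alpha_{n-2}\}$, regard it as an element of $\gl_n^*$, and invoke Jantzen's results in \cite{Jan3} to write down the irreducible $U_\chi(\gl_n)$-modules together with their composition series in the baby Verma modules. Since $\gl_n=\sl_n\oplus\bbk\cdot\mathrm{id}$ and $\mathrm{id}$ is central, restriction from $\gl_n$ to $\sl_n$ behaves well: an irreducible $\gl_n$-module restricts to $\sl_n$ as a direct sum of $\gl_n$-conjugates, and the central character in $\mathscr X_p$ splits into the $p$ fibers of the trace map. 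This is where Jantzen's restriction argument \cite{Jan4}, already cited for the regular case, is reused: it tells us precisely when $\res_\chi Z(\lambda)$ stays irreducible and how it decomposes otherwise, and it produces the family $\lambda_0,\ldots,\lambda_{n-1}\in\mathscr X_p$ as the distinct weights in a $W$-orbit (modulo $W_I$) that become non-conjugate after passing to $\sl_n$.

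For part (1), the key steps are: (a) take the irreducible $\gl_n$-module $\widetilde L_\chi(\mu)$ of the relevant weight, whose dimension is $p^{N}r_{n-1-i}/p = p^{N-1}r_{n-1-i}$ after accounting for the central direction — here $r_j$ are the dimensions of the irreducible $W_I\cong\frak S_{n-1}$-related pieces, matching the numbers appearing in Jantzen's subregular formulas; (b) restrict to $\sl_n$ and identify each indecomposable summand with some $L_\chi(\lambda_i)$, using Lemma \ref{lem: univ} to know each is a simple quotient of a baby Verma module and Lemma \ref{lem: fundamental iso 2} on the $\gl_n$ side to control when two weights give the same module; (c) exhibit the explicit basis \eqref{eq: subregular basis} — presumably monomials in root vectors $X_{-\theta_i}^{a_i}$ as in the regular case — and verify dimension count and the identification $L_\chi(\lambda_i)\cong Z_\chi(\lambda_i,\alpha_{n-1})$, where $Z_\chi(\lambda,\alpha_{n-1})$ is the intermediate (generalized baby Verma) module induced from the parabolic attached to the single node $\alpha_{n-1}$. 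For part (2), I would push the composition series of $Z_\chi^{\gl_n}(\mu)$ through restriction: $\res_\chi Z^{\gl_n}(\mu)$ decomposes as a sum of $Z_\chi^{\sl_n}(\lambda_i)$'s, and matching composition factors on both sides, together with the multiplicity data from \cite{Jan3} repackaged in Corollary \ref{cor: 3.1.5} as the integers $N_j$, yields $[Z_\chi(\lambda_i):L_\chi(\lambda_j)]=N_j$.

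The main obstacle I anticipate is step (b)–(c): making the identification of the $\sl_n$-summands with the $L_\chi(\lambda_i)$ genuinely precise, i.e. showing that restriction produces exactly $n$ (rather than fewer, with collisions, or more) distinct simple modules, and pinning down which weight $\lambda_i$ corresponds to which summand so that the dimension formula $p^{N-1}r_{n-1-i}$ comes out with the stated indexing. This requires a careful analysis of how the $W$-orbit of $\lambda$ in $\mathscr X_p(\sl_n)$ sits inside the $W$-orbit in $\mathscr X_p(\gl_n)$ and how $W_I$ acts on it — in the subregular case $W_I\cong\frak S_{n-1}$ is a proper maximal parabolic, so the orbit structure is more delicate than the regular case where $W_I=W$ collapses everything. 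A secondary technical point is checking that the module $Z_\chi(\lambda_i,\alpha_{n-1})$ is actually irreducible as an $\sl_n$-module (not merely that its image under restriction-from-$\gl_n$ is), which I would handle by a dimension argument combined with the fact that it has simple head $L_\chi(\lambda_i)$ and the composition-multiplicity bookkeeping from part (2) leaves no room for extra factors.
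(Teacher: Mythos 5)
Your overall strategy coincides with the paper's: exploit the fact that the trace form on $\gl_n$ stays non\-degenerate when $p\mid n$, import Jantzen's subregular description of $U_\chi(\gl_n)$-modules (the paper's Theorem \ref{thm: 3.1.2}), and push it down to $\sl_n$ by restriction; your final step, identifying $L_\chi(\lambda_i)$ with $Z_\chi(\lambda_i,\alpha_{n-1})$ by comparing dimensions with a surjection already in hand, is exactly what the paper does. But the linchpin of the argument is missing from your proposal, and the heuristic you substitute for it points in the wrong direction. The whole proof turns on showing that $\res_{\chi}L_{\underline{\chi}}(\underline{\lambda_i})$ is \emph{simple} over $U_\chi(\sl_n)$ (Theorem \ref{thm: 3.1.3}). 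The mechanism is: since $\chi$ is subregular, $W_I\cong\frak S_{n-1}$ stabilizes the letter $n$, so $\lambda_i+\varpi_n\notin W_I.\lambda_i$; by Lemma \ref{lem: fundamental iso 2} and Lemma \ref{lem: 2.3.2} this gives $E\otimes\bbk_1\ncong E$ for $E=L_{\underline{\chi}}(\underline{\lambda_i})$, whence the projective-cover computation $[\res_\chi E:L]=\sum_{a\in\mathbb F_p}\dim\Hom_{\gl_n}(P_E,E\otimes\bbk_a)=1$ forces $\res_\chi E$ to be irreducible. This is the subregular counterpart of Proposition \ref{prop: 2.3.3}, with the opposite outcome from the regular case, and nothing in your write-up supplies it. Your picture that an irreducible $\gl_n$-module ``restricts as a direct sum of $\gl_n$-conjugates'' with ``the central character splitting into the $p$ fibers of the trace map'' is not what happens here (nor in the regular case, where the restriction is uniserial, not semisimple); if taken literally it would produce spurious multiplicities. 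The same confusion shows in the dimension bookkeeping $p^{N}r_{n-1-i}/p$: Jantzen already gives $\dim L_{\underline\chi}(\underline{\lambda_i})=p^{N-1}r_{n-1-i}$ for $\gl_n$ (the root systems of $\gl_n$ and $\sl_n$ have the same $N$), and this dimension is inherited verbatim because the restriction is simple --- no division by $p$ occurs.

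For part (2) the mechanism is likewise different from what you describe. The restriction $\res_{\underline\chi}Z_{\underline\chi}(\underline{\lambda_i})$ is a \emph{single} baby Verma module $Z_\chi(\lambda_i)$ for $\sl_n$ (Lemma \ref{lem: 2.4.10}(3)); it does not decompose as a sum of several $Z_\chi(\lambda_j)$. The multiplicities $N_j>1$ arise because distinct $\gl_n$-composition factors $L_{\underline\chi}(\underline{\lambda_s})$ (each occurring once upstairs) can become isomorphic after restriction, which by Corollary \ref{cor: 3.1.4} happens exactly when $L_{\underline\chi}(\underline{\lambda_s})\cong L_{\underline\chi}(\underline{\lambda_j})\otimes\bbk_a$ for some $a\in\mathbb F_p$; the combinatorial criterion in Remark \ref{Rem of iso} (the condition $t_{l-j}-t_{\tau(l)-s}=t_{n-j}-t_{n-s}$ for some $\tau$ fixing $n$) is what makes $N_j$ computable. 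Note also that the theorem never asserts the $n$ modules $L_\chi(\lambda_i)$ are pairwise non-isomorphic, so the ``exactly $n$ distinct simples'' issue you flag as the main obstacle is not the point; the real work is the collision criterion just described together with the simplicity of the restriction.
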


The proof will be given in \S\ref{sec: proof of thm 0.5}.

\subsection{} For general theory and recent advances on modular representations of reductive Lie algebras, one can refer to \cite{BM}-\cite{Wil} and \cite{Jan1}-\cite{Jan2} {\textsl{etc}.}.

\section{Preliminaries}

Keep the notations as in the introduction. In particular, let $\ggg=\gl_n$ or $\sl_n$. The simple roots of $\ggg$ consists of $\alpha_i=\varepsilon_i-\varepsilon_{i+1}$ with $1 \leq i \leq n-1$.

\subsection{$X(T)$ and $Y(T)$} Let $T$ be the maximal torus in $\GL_n(\bbk)$ consisting of diagonal matrices. Denote by $X(T)$  the character group of $T$, and by $Y(T)$ its dual group which is an one parameter multiplicative subgroup(\cite[16.1]{Hum 2}). Denote the standard paring of $X(T)$ and $Y(T)$ by $\langle \cdot,\cdot\rangle$. This means that if $\lambda\in X(T)$ and $\phi\in Y(T)$, then $\lambda(\phi(a))=a^{\langle \lambda, \phi\rangle}$ for $a\in \bbk^{\times}:=\bbk\backslash \{0\}$.
 Suppose that $W$ is the Weyl group of $\GL_n(\bbk)$ which can be identified with the symmetric group.

 Clearly, $\varepsilon_i \in X(T)$ that maps a matrix in $T$ to its $i$-th diagonal entry.  The root system $R$ mentioned in the introduction also becomes  the root system  of   $\GL_n(\bbk)$ with respect to $T$, which is a subset of $X(T)$.  For any $\alpha\in R$, denote the coroot by $\alpha^\vee\in Y(T)$ (see \cite[\S7.4]{Sp}).

 The fundamental weights will be denoted by $\varpi_i$ with $i=1,\ldots, n-1$.
 Denote by $W$ the Weyl group of $\frak{gl}_n$.

\subsection{The first dominant alcove}
Let $C=\{\lambda\in X(T) \mid 0\leq\langle\lambda+\rho,\alpha^{\vee}\rangle \leq p \mbox{ for all }\alpha\in R^+ \}$ be the closure of the first fundamental alcove of $X(T)$.

\subsection{Some additional conventions}\label{SAC}  For a simple root $\alpha$, we set $\frak p_\alpha=\frak b \oplus {\ggg_{-\alpha}}$ which is a parabolic subalgebra associated with the subsystem $\{\alpha\}$. Let $\bbk_\lambda$ is a one dimensional $U_0(\frak b)$-module with $\frak h$ acts as multiple.
For $\lambda \in X(T)$, if $\ggg=\gl_{n}$, it is clear that $d\lambda \in \Lambda:=X(T)\slash pX(T)$ which is equal to $\oplus_{i=1}^n \mathbb{F}_p \varepsilon_{i}$, i.e. $\mathscr X_p=\Lambda$.  For convenience, we still use $\lambda$ instead of $d\lambda$ if the context is clear.
We set $Z_{\chi,\alpha}(\lambda)=U_\chi(\frak p_\alpha) \otimes_{U_{0}(\frak b)}\bbk_\lambda$.

Let $\langle\lambda+\rho,\alpha^{\vee}\rangle \equiv m$(mod $p$) with $0<m \leq p$, define $\varphi:Z_{\chi,\alpha}(\lambda-m\alpha) \rightarrow Z_{\chi,\alpha}(\lambda)$, $\varphi(X_{-\alpha}^j \otimes 1)=X_{-\alpha}^{m+j} \otimes 1$. If  $\chi(X_{-\alpha})=0$, then $\varphi$ is a homomorphism with $\text{Im}\varphi \neq Z_{\chi,\alpha}(\lambda)$. The quotient $Z_{\chi,\alpha}(\lambda)/\text{Im}\varphi$ is a simple $\frak p_\alpha$-module which is denoted by $L_{\chi,\alpha}(\lambda)$. Define $U_\chi(\ggg)$-module
\begin{align}\label{eq: levi verma}
Z_\chi(\lambda,\alpha)=U_\chi(\ggg)\otimes_{U_{0}(\frak p_\alpha)}L_{\chi,\alpha}(\lambda).
\end{align}

\subsection{Some known results}
 Generally, for a restricted Lie algebra $\mathscr{L}$, we call $\mathscr{L}$ a unipotent Lie algebra if, for all $x \in \mathscr{L}$, there exists $r>0$ such that $x^{[p^r]}=0$ where $x^{[p^r]}$ denotes the $p$-th power map iterated $r$ times. So clearly, $\mathscr{L}$ is unipotent also means $\mathscr{L}$ is $p$-nilpotent. We define $\mathscr{L}_\chi:=\{X \in \mathscr{L} \mid \chi([X,\mathscr{L}])=0\}$. The following basic result will be used latter.

\begin{lemma} (\cite[Corollary 7.2]{Jan1}) \label{lem: 1.2.1} Let $\frak m$ be a uipotent restricted subalgebra of $\ggg$ with $\frak m \cap \ggg_\chi=\{0\}$. If $\chi([\frak m,\frak m])=0$ and $\chi(\frak m^{[p]})=0$, then $U_\chi(\ggg)$-module is free over $U_\chi(\frak m)$.
\end{lemma}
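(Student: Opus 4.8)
Throughout write $A:=U_\chi(\frak m)$; I read the assertion as: \emph{every} $U_\chi(\ggg)$-module is free over $A$. Recall that $\ggg_\chi=\{X\in\ggg\mid\chi([X,\ggg])=0\}$ is the radical of the alternating form $\beta(X,Y)=\chi([X,Y])$ on $\ggg$. The plan has three parts: identify $A$ as a local Frobenius algebra; reduce to $\dim\frak m=1$; and settle that case by converting the hypothesis $\frak m\cap\ggg_\chi=\{0\}$ into a statement about Jordan blocks.

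\textbf{Step 1: $A$ is local Frobenius.} Every reduced enveloping algebra is a Frobenius algebra (the Frobenius form reads off the coefficient of the top basis monomial), hence self-injective; so once $A$ is known to be local the notions free, projective and injective coincide for finite-dimensional $A$-modules. Since $\frak m$ is unipotent it is nilpotent as a Lie algebra (Engel's theorem applied to its faithful action by left multiplication on $U_0(\frak m)$, on which every element acts nilpotently), so each simple $U_\chi(\frak m)$-module has dimension $p^{(\dim\frak m-\dim\frak m_\chi)/2}$; the hypothesis $\chi([\frak m,\frak m])=0$ gives $\frak m_\chi=\frak m$, so all simple $A$-modules are one-dimensional, given by linear characters $\phi\colon\frak m\to\bbk$ with $\phi([\frak m,\frak m])=0$ and $\phi(X)^p-\phi(X^{[p]})=\chi(X)^p$ (the conditions $\chi(\frak m^{[p]})=0$ and $\bbk$ perfect guarantee such $\phi$ exists). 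Any two such characters differ by a $\psi$ with $\psi([\frak m,\frak m])=0$ and $\psi(X)^p=\psi(X^{[p]})$, and unipotence ($X^{[p^r]}=0$ for $r\gg0$, whence $\psi(X)^{p^r}=0$) forces $\psi=0$; so the simple module is unique and $A$ is local.

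\textbf{Step 2: reduction to $\dim\frak m=1$.} A nonzero nilpotent $\frak m$ contains a one-dimensional restricted ideal $\frak z=\bbk z$ with $z$ central in $\frak m$ and $z^{[p]}=0$, and $\frak z$ inherits all the hypotheses (in particular $\frak z\cap\ggg_\chi\subseteq\frak m\cap\ggg_\chi=\{0\}$). Granting the rank-one case, an arbitrary $U_\chi(\ggg)$-module $M$ is free over the central subalgebra $R:=U_\chi(\frak z)=\bbk[\,\bar z\,]/(\bar z^p)$ of $A$, where $\bar z:=z-\chi(z)$. Then $\ker(\bar z|_M)$ carries a natural action of the centralizer $\ggg_z\supseteq\frak m$, hence is a module over $A/(\bar z)\cong U_{\chi'}(\frak m/\frak z)$ for a suitable $\chi'$; by induction on $\dim\frak m$ it is free over $A/(\bar z)$, and since $\bar z^p=0$ a Nakayama argument along the nilpotent ideal $(\bar z)$ lifts this, together with $R$-freeness, to $A$-freeness of $M$. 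The point needing care is that $\frak m/\frak z$, with the induced $p$-character and ambient algebra $\ggg_z$, again satisfies the non-degeneracy hypothesis; this is essentially the Kac--Weisfeiler reduction, and I expect that verification to be one of the more delicate parts (alternatively one may invoke the Kac--Weisfeiler Morita reduction directly).

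\textbf{Step 3: the case $\frak m=\bbk x$, $x^{[p]}=0$.} By $\frak m\cap\ggg_\chi=\{0\}$ there is $y\in\ggg$ with $\chi([x,y])=1$. With $\bar x:=x-\chi(x)$ we have $\bar x^p=0$ and $A=\bbk[\,\bar x\,]/(\bar x^p)$. A finite-dimensional $A$-module $M$ is projective (hence free) iff the multiplication map $A\otimes_\bbk M\to M$ admits an $A$-linear section; writing such a section as $m\mapsto\sum_{j=0}^{p-1}\bar x^{\,j}\otimes\psi_j(m)$, $A$-linearity forces $\psi_j=\psi\circ\bar x^{\,p-1-j}$ for $\psi:=\psi_{p-1}\in\mathrm{End}_\bbk(M)$, and the section condition becomes $\sum_{j=0}^{p-1}\bar x^{\,j}\,\psi\,\bar x^{\,p-1-j}=\mathrm{id}_M$. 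Because $\bar x^p=0$, the left-hand side is exactly $(\mathrm{ad}\,\bar x|_M)^{p-1}(\psi)$, so $M$ is $A$-free precisely when $\mathrm{id}_M\in\mathrm{Im}\bigl((\mathrm{ad}\,\bar x|_M)^{p-1}\bigr)$ on $\mathrm{End}_\bbk(M)$ — equivalently, when $\bar x|_M$ has all Jordan blocks of size $p$. This is where $y$ enters: from the operators $(\mathrm{ad}\,x)^j(y)|_M$ ($0\le j\le p-1$; note $(\mathrm{ad}\,x)^p(y)=(\mathrm{ad}\,x^{[p]})(y)=0$) together with the relation $z^p-z^{[p]}=\chi(z)^p=1$ satisfied by $z=[x,y]$ in $U_\chi(\ggg)$, one builds the required preimage $\psi$ from the $\ggg$-action on $M$. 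Carrying out this construction — i.e.\ turning the single non-degeneracy $\chi([x,y])\ne0$ into the absence of short Jordan blocks for every $U_\chi(\ggg)$-module — is the main obstacle of the whole argument.
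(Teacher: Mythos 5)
The paper does not actually prove this lemma; it is quoted verbatim from Jantzen's lecture notes (\cite[Corollary 7.2]{Jan1}), where it appears as (a version of) Premet's key lemma towards the Kac--Weisfeiler dimension conjecture. So there is no in-paper argument to compare against, and your proposal has to be judged on its own. Your Step 1 is correct and complete: $\chi(\lbrack\frak m,\frak m\rbrack)=0=\chi(\frak m^{\lbrack p\rbrack})$ makes $\chi|_{\frak m}$ a one-dimensional representation, $U_\chi(\frak m)\cong U_0(\frak m)$ is local Frobenius, and free $=$ projective $=$ injective. The Higman-type criterion in Step 3 (freeness over $\bbk\lbrack\bar x\rbrack/(\bar x^p)$ $\Leftrightarrow$ $\mathrm{id}_M\in\mathrm{Im}\bigl((\mathrm{ad}\,\bar x|_M)^{p-1}\bigr)$, i.e.\ all Jordan blocks of $\bar x$ have size $p$) is also a correct reformulation.

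However, the two steps that carry all of the content specific to this lemma are left open, and you say so yourself. First, the inductive reduction in Step 2 does not close: $\ker(\bar z|_M)$ is only a module over the centralizer $\ggg_z$, and the elements $y$ witnessing $\chi(\lbrack w,y\rbrack)\neq 0$ for $w\in\frak m$ in general do \emph{not} lie in $\ggg_z$ (indeed the witness for $z$ itself never does, since $y\in\ggg_z$ forces $\chi(\lbrack z,y\rbrack)=0$). So the hypothesis $\frak m'\cap(\ggg_z)_{\chi}=\{0\}$ for the pair $(\ggg_z,\frak m/\frak z)$ is not inherited, and "this is essentially the Kac--Weisfeiler reduction" is not a substitute for verifying it; the published proofs avoid this by either keeping the ambient algebra $\ggg$ fixed throughout (reducing to cyclic subalgebras of $\frak m$ via rank-variety/detection theorems, which is itself heavy machinery) or by a filtration argument on $U_\chi(\ggg)$ with a different intermediate statement. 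Second, in Step 3 the actual construction of $\psi$ with $(\mathrm{ad}\,\bar x|_M)^{p-1}(\psi)=\mathrm{id}_M$ out of the single datum $\chi(\lbrack x,y\rbrack)=1$ is precisely the hard core of Premet's argument; you explicitly defer it as "the main obstacle of the whole argument". As it stands the proposal is a plausible roadmap with the correct target reformulations, but not a proof: both essential steps contain genuine gaps.
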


 \section{Irreducible modules of $\frak {sl}(n)$ for regular nilpotent case with $p \mid n$}
\subsection{Restrictions and inductions}
Let $z=E_{11}$, then $\gl_{n}=\sl_n \oplus \mathbb{K}z$ and $\gl_n \slash \sl_n \cong \mathbb{K}z$. For $\chi \in \gl_{n}^*$, we still denote $\chi|_{\sl_n}$ by $\chi$, thus $U_{\chi|_{\sl_n}}(\frak {sl}_n)=U_{\chi}(\frak {sl}_n)$.
Each $\frak {gl}_n$-module $N$ can be regarded as a $\frak {sl}_n$-module, we use $\res N$ to denote this $\frak {sl}_n$-module. Similarly, each $U_{\chi}(\frak {gl}_n)$-module $M$ can be regarded as a $U_{\chi}(\frak {sl}_n)$-module, we use $\res_{\chi}M$ to denote this $U_{\chi}(\frak {sl}_n)$-module, where $\chi \in \gl_{n}^*$.
Thus we can define two functors:
$$\res_{\chi}:  \{U_{\chi}(\frak {gl}_n)-modules\} \rightarrow \{U_{\chi}(\frak {sl}_n)-modules\}$$
$$\ M  \mapsto  \res_{\chi}M$$
$$\ind_{\chi}:  \{U_{\chi}(\frak {sl}_n)-modules\} \rightarrow \{U_{\chi}(\frak {gl}_n)-modules\}$$
$$\ M  \mapsto  U_\chi(\frak {gl}_n)\otimes_{U_\chi(\frak {sl}_n)}M$$

For each $U_{\chi}(\frak {gl}_n)$-module $M$ and each $U_{\chi}(\frak {sl}_n)$-module $N$, we have the reciprocity:
 \begin{align}\label{eq: 2.1}
 {\Hom}_{U_{\chi}(\frak {gl}_n)}(\ind_{\chi}M, N) \cong {\Hom}_{U_{\chi}(\frak {sl}_n)}(M, \res_{\chi}N).
 \end{align}

Namely $\ind_{\chi}$ is left adjoint to $\res_{\chi}$.

\subsection{Restriction functors}
Suppose $M$ and $N$ are $\frak {gl}_n$-modules, then ${\Hom}_k(M, N)$ becomes a $\frak {gl}_n$-module with $\frak {gl}_n$ action: $(x.f)(m)=x.f(m)-f(x.m)$ for each $x\in \frak {gl}_n$, $f\in {\Hom}_k(M, N)$ and $m\in M$.
 We have the following observations:

(a) For given $\gl_n$-modules $M$, $N$, ${\Hom}_{\frak {sl}_n}(\res M, \res N)$ becomes the submodule of ${\Hom}_k(M, N)$(as $\frak {gl}_n$-modules). Note that the action of $\frak {sl}_n$ on ${\Hom}_{\frak {sl}_n}({\res}M, {\res}N)$ is trival. So it is a ${\frak {gl}_n}/{\frak {sl}_n}$-module.

(b) Furthermore, suppose $\chi\in\gl_n^*$,  and $M$, $N$ are $U_{\chi}(\frak {gl}_n)$-modules, then ${\Hom}_{\frak {sl}_n}({\res}_{\chi}M, {\res}_{\chi}N)$ becomes a $U_0({\frak {gl}_n}/\frak {sl}_n)$-module.

In following, we suppose that $M$, $N$ are finite dimensional $U_{\chi}({\frak {gl}_n})$-modules.
Consider $z$ acts on ${\Hom}_{\frak {sl}_n}({\res}_{\chi}M, {\res}_{\chi}N)$, suppose
$f\in \Hom_{\frak {sl}_n}({\res}_{\chi}M, {\res}_{\chi}N)$ in the eigenspace for the eigenvalue $a$. Since ${\Hom}_{{\sl_n}}({\res}_{\chi}M, {\res}_{\chi}N)$ is $U_0({\frak {gl}_n}/\frak {sl}_n)$-module and $z^{[p]}=z$, then $0=(z^p-z^{[p]}).f=(z^p-z).f=(a^p-a)f$. This implies that $a^p=a$, so $a\in \mathbb{F}_p$. We define
$${\Hom}_{\frak {sl}_n}({\res}_{\chi}M, {\res}_{\chi}N)_a:=\{f \in\Hom_{\frak {sl}_n}({\res}_{\chi}M, {\res}_{\chi}N)\mid
z.f=af \}$$
 for all $a\in \mathbb{F}_p$. Since $z$ acts diagonally with eigenvalues in $\mathbb{F}_p$, then ${\Hom}_{\frak {sl}_n}({\res}_{\chi}M, {\res}_{\chi}N)$ is the direct sum of its eigenspces with eigenvalues in $\mathbb{F}_p$. Hence we get

\begin{align} \label{sum}
\Hom_{\frak {sl}_n}({\res}_{\chi}M, {\res}_{\chi}N)=\bigoplus_{a\in \mathbb{F}_p}\Hom_{\frak {sl}_n}({\res}_{\chi}M, {\res}_{\chi}N)_a
\end{align}

For $a \in \bbk$, $\bbk_a$ is a one dimensional $\frak {gl}_n$-module where $\sl_n$ acts 0 and where $z$ acts via $a$.

\begin{lemma}\label{lem: 2.2.2} For ${\chi}, {\chi}' \in \frak {gl}_n^*$, if $M$ is a $U_{\chi}(\frak {gl}_n)$-module, $N$ is a $U_{{\chi}'}(\frak {gl}_n)$-module, then $M \otimes N$ is $U_{{\chi}+{\chi}'}(\frak {gl}_n)$-module.
\end{lemma}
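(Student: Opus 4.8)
The plan is to equip $M \otimes N$ with the standard $\gl_n$-module structure coming from the comultiplication of $U(\gl_n)$, namely $x.(m \otimes n) = (x.m)\otimes n + m \otimes (x.n)$ for $x \in \gl_n$, and then to verify that the defining relations of $U_{\chi+\chi'}(\gl_n)$ hold on $M \otimes N$; that is, that $x^{p} - x^{[p]}$ acts on $M \otimes N$ as the scalar $\bigl((\chi+\chi')(x)\bigr)^{p}$ for every $x \in \gl_n$. Write $\rho_M$, $\rho_N$ and $\rho$ for the representations of $\gl_n$ (and their extensions to $U(\gl_n)$) on $M$, $N$ and $M \otimes N$ respectively, so that $\rho(x) = \rho_M(x)\otimes \id_N + \id_M \otimes \rho_N(x)$, and correspondingly $\rho(x^{[p]}) = \rho_M(x^{[p]})\otimes \id_N + \id_M \otimes \rho_N(x^{[p]})$ since $x^{[p]} \in \gl_n$.

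Fix $x \in \gl_n$. First I would note that the two operators $\rho_M(x)\otimes \id_N$ and $\id_M \otimes \rho_N(x)$ commute in $\mathrm{End}_{\bbk}(M \otimes N)$, because
$(\rho_M(x)\otimes \id_N)(\id_M \otimes \rho_N(x)) = \rho_M(x)\otimes \rho_N(x) = (\id_M \otimes \rho_N(x))(\rho_M(x)\otimes \id_N)$.
Since $\bbk$ has characteristic $p$, the binomial expansion of the $p$-th power of a sum of two commuting operators collapses, so
\[
\rho(x)^{p} = \bigl(\rho_M(x)\otimes \id_N + \id_M \otimes \rho_N(x)\bigr)^{p} = \rho_M(x)^{p}\otimes \id_N + \id_M \otimes \rho_N(x)^{p}.
\]
Now I would invoke that $M$ is a $U_{\chi}(\gl_n)$-module, so $\rho_M(x)^{p} = \rho_M(x^{[p]}) + \chi(x)^{p}\,\id_M$, and likewise $\rho_N(x)^{p} = \rho_N(x^{[p]}) + \chi'(x)^{p}\,\id_N$ since $N$ is a $U_{\chi'}(\gl_n)$-module. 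Substituting these and using $a^{p}+b^{p} = (a+b)^{p}$ in characteristic $p$, we get
\[
\rho(x)^{p} = \rho(x^{[p]}) + \bigl(\chi(x)^{p}+\chi'(x)^{p}\bigr)\id_{M\otimes N} = \rho(x^{[p]}) + \bigl((\chi+\chi')(x)\bigr)^{p}\id_{M\otimes N}.
\]
Hence $\rho\bigl(x^{p}-x^{[p]}\bigr) = \bigl((\chi+\chi')(x)\bigr)^{p}\id_{M\otimes N}$, which is precisely the relation imposed in $U_{\chi+\chi'}(\gl_n)$, so the action of $U(\gl_n)$ on $M \otimes N$ factors through $U_{\chi+\chi'}(\gl_n)$.

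I do not expect any genuine obstacle: the statement is a formal consequence of the cocommutative Hopf structure on $U(\gl_n)$ together with the additivity and $p$-semilinearity of $x \mapsto x^{p}-x^{[p]}$. The only points that need a little care are the commutativity of $\rho_M(x)\otimes \id_N$ with $\id_M \otimes \rho_N(x)$ (which is what licenses the collapse of the $p$-th power), and keeping track of the Frobenius twist $a \mapsto a^{p}$ on the scalars $\chi(x)$ and $\chi'(x)$; neither of these causes any difficulty.
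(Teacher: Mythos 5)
Your proof is correct and complete: the commutativity of $\rho_M(x)\otimes \id_N$ and $\id_M\otimes\rho_N(x)$, the collapse of the binomial expansion of the $p$-th power in characteristic $p$, and the additivity of the Frobenius on the scalars $\chi(x),\chi'(x)$ are exactly the points that need checking, and you handle all three. The paper states this lemma without any proof (it is a standard fact about reduced enveloping algebras), so there is nothing to compare against; your argument is the standard one the authors evidently take for granted.
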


For a given $a\in \mathbb{F}_p$, $U_0(\frak {gl}_n)$ admits a one-dimensional module $\bbk_a$ with trival $\sl_n$-action and $z$-action by scalar $a$. Then $M\otimes \bbk_a$ is an $U_{\chi}(\frak {gl}_n)$-module. Moreover if $M$ is a simple $U_{\chi}(\frak {gl}_n)$-module, then $M\otimes \bbk_a$ is also a simple $U_{\chi}(\frak {gl}_n)$-module with $a\in \mathbb{F}_p$. Similarly, $N \otimes \bbk_{-a}$ is an $U_{\chi}(\frak {gl}_n)$-module with $a\in \mathbb{F}_p$.


\begin{lemma}\label{lem: 2.2.1} Suppose $M$, $N$ are $U_{\chi}({\frak {gl}_n})$-modules, ${\chi}\in \frak {gl}_n^*$. For $a\in \mathbb{F}_p$, we get isomorphisms (as vector spaces)
\begin{align*}
 {\Hom}_{\frak {sl}_n}({\res}_{\chi}M, {\res}_{\chi}N)_a \cong \Hom_{\gl_n}(M \otimes \bbk_a, N) \cong \Hom_{\gl_n}(M, N \otimes \bbk_{-a}).
\end{align*}
\end{lemma}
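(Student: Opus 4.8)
The plan is to produce the two claimed isomorphisms by writing down explicit linear maps and checking they are mutually inverse, using only the definition of the $\gl_n$-action on $\Hom_k(M,N)$ and the decomposition \eqref{sum}. First I would treat the isomorphism $\Hom_{\sl_n}(\res_\chi M,\res_\chi N)_a\cong\Hom_{\gl_n}(M\otimes\bbk_a,N)$. As a vector space $M\otimes\bbk_a\cong M$ via $m\otimes 1\mapsto m$, so a $k$-linear map $f\colon M\to N$ may be viewed interchangeably as a map $M\otimes\bbk_a\to N$. Under this identification, $f$ is $\sl_n$-equivariant as a map $\res_\chi M\to\res_\chi N$ precisely when it is $\sl_n$-equivariant as a map $M\otimes\bbk_a\to N$, since $\sl_n$ acts trivially on $\bbk_a$. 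It then remains to check that the extra condition $z.f=af$ (the defining condition of the $a$-eigenspace on the left) is exactly the condition that makes $f$ additionally $z$-equivariant, hence $\gl_n$-equivariant, as a map $M\otimes\bbk_a\to N$. Here one unwinds: for $f\colon M\to N$, the action on $\Hom_k(M,N)$ is $(z.f)(m)=z.f(m)-f(z.m)$, while $z$ acts on $m\otimes 1\in M\otimes\bbk_a$ by $z.(m\otimes 1)=(z.m)\otimes 1+m\otimes(z.1)=(z.m+am)\otimes 1$; so $f$ commutes with $z$ as a map $M\otimes\bbk_a\to N$ iff $z.f(m)=f(z.m)+af(m)$ for all $m$, i.e. iff $(z.f)(m)=af(m)$, i.e. iff $z.f=af$. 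This gives the first bijection, and it is manifestly $k$-linear in $f$.

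For the second isomorphism $\Hom_{\gl_n}(M\otimes\bbk_a,N)\cong\Hom_{\gl_n}(M,N\otimes\bbk_{-a})$ I would use the standard tensor–hom style adjunction for one-dimensional twists: since $\bbk_a$ is one-dimensional with $\bbk_a\otimes\bbk_{-a}\cong\bbk_0$ (the trivial module) by Lemma \ref{lem: 2.2.2}, tensoring with $\bbk_{-a}$ is an equivalence on the relevant module categories with inverse tensoring with $\bbk_a$. Concretely, $g\colon M\otimes\bbk_a\to N$ corresponds to $g\otimes\id_{\bbk_{-a}}\colon M\otimes\bbk_a\otimes\bbk_{-a}\to N\otimes\bbk_{-a}$, and precomposing with the isomorphism $M\cong M\otimes\bbk_0\cong M\otimes\bbk_a\otimes\bbk_{-a}$ yields a map $M\to N\otimes\bbk_{-a}$; one checks this is $\gl_n$-equivariant (it is, being a composite of equivariant maps), and the construction in the other direction (tensor with $\bbk_a$, then use $N\otimes\bbk_{-a}\otimes\bbk_a\cong N$) is an inverse. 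Again $k$-linearity in the argument is clear.

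I do not expect any serious obstacle here; the lemma is essentially bookkeeping. The only point requiring a little care is the very first identification — keeping straight that the $z$-eigenvalue condition $z.f=af$ on the $\Hom_k$-module structure translates into $\gl_n$-equivariance after the twist, with the correct sign — and making sure the twist lands on the correct side ($\bbk_a$ on $M$, $\bbk_{-a}$ on $N$) so that the $p$-character is preserved as $\chi$ on both sides, which is guaranteed by Lemma \ref{lem: 2.2.2} together with $\bbk_a$ being a $U_0(\gl_n)$-module. I would state the maps explicitly and leave the verification of "mutually inverse" as a one-line check.
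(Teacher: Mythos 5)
Your proof is correct: the unwinding of the eigenvalue condition $z.f=af$ into $z$-equivariance of the twisted map $M\otimes\bbk_a\to N$, together with the standard twist-by-$\bbk_{-a}$ adjunction using $\bbk_a\otimes\bbk_{-a}\cong\bbk_0$, is exactly the bookkeeping argument the paper relies on (it states the lemma without proof). No gaps.
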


By Lemma \ref{lem: 2.2.1}, we have that if $M$, $N$ are finite dimensional $U_{\chi}(\frak {gl}_n)$-modules with ${\chi}\in \frak {gl}_n^*$, then
\begin{align}
\Hom_{\sl_n}({\res}_{\chi}M, {\res}_{\chi}N) \cong \bigoplus_{a\in \mathbb{F}_p}\Hom_{\frak {gl}_n}(M \otimes \bbk_a, N) \cong \bigoplus_{a\in \mathbb{F}_p}\Hom_{\frak {gl}_n}(M, N\otimes \bbk_{-a})
\end{align}

as linear spaces.



\begin{lemma}\label{lem: 2.2.3} Suppose $M$ is a $U_{\chi}({\frak {gl}_n})$-module. Then

\begin{itemize}
\item[(1)] For $a, b \in \mathbb{F}_p$, $\bbk_a \otimes \bbk_b \cong \bbk_{a+b}$.

\item[(2)] The following isomorphisms are equivalent:

(i)$M \otimes \bbk_1 \cong M$;

(ii)$M \otimes \bbk_a \cong M$, $\forall {a\in \mathbb{F}_p}$;

(iii)$M \otimes \bbk_a \cong M $, for some ${a\in \mathbb{F}_p^\times}:=\mathbb{F}_p \backslash \{0\}$.
    \end{itemize}
\end{lemma}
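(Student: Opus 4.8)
Part (1) is immediate: if $v$ spans $\bbk_a$ and $w$ spans $\bbk_b$, then $z.(v\otimes w) = (z.v)\otimes w + v\otimes(z.w) = (a+b)(v\otimes w)$, while $\sl_n$ acts by zero on both tensor factors and hence on the tensor product; so $\bbk_a\otimes\bbk_b$ is one-dimensional with $z$ acting by the scalar $a+b$, which is exactly $\bbk_{a+b}$. I would write this out in one line.

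For part (2), the plan is to prove the chain of implications $(ii)\Rightarrow(i)\Rightarrow(iii)$ trivially, and then close the cycle with $(iii)\Rightarrow(ii)$, which is the only step with content. The implication $(ii)\Rightarrow(i)$ is the special case $a=1$; $(i)\Rightarrow(iii)$ is clear since $1\in\bbk$, $\mathbb{F}_p^\times$ is nonempty $(p\geq 2)$. So everything reduces to: if $M\otimes\bbk_a\cong M$ for some $a\in\mathbb{F}_p^\times$, then $M\otimes\bbk_b\cong M$ for all $b\in\mathbb{F}_p$. The key observation is that $a$ generates $\mathbb{F}_p^+$ as an additive group, i.e. $\{0,a,2a,\dots,(p-1)a\}=\mathbb{F}_p$, because $a\neq 0$ in the field $\mathbb{F}_p=\bbz/p\bbz$ and $p$ is prime. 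Then iterating the given isomorphism and using part (1):
\[
M\otimes\bbk_{ka}\;\cong\;(M\otimes\bbk_{(k-1)a})\otimes\bbk_a\;\cong\;\cdots\;\cong\;M\otimes\bbk_a\otimes\cdots\otimes\bbk_a\;\cong\;M
\]
by an easy induction on $k$, using associativity of $\otimes$ and the isomorphism $M\otimes\bbk_a\cong M$ at each stage (tensoring an isomorphism with the identity on $\bbk_a$ preserves it). Since every $b\in\mathbb{F}_p$ is of the form $ka$ for some $k$, this gives $(ii)$.

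The main (minor) obstacle is purely bookkeeping: making sure the induction is set up so that tensoring the fixed isomorphism $\psi\colon M\otimes\bbk_a\xrightarrow{\sim}M$ with $\mathrm{id}_{\bbk_a}$ and then composing with $\psi$ again is legitimate as $U_\chi(\gl_n)$-module maps — this uses Lemma \ref{lem: 2.2.2} (so the modules in question all lie over the same $p$-character $\chi$, since $\bbk_a$ has $p$-character $0$) and the associativity/commutativity constraints on $\otimes$, which are standard. No genuine representation theory is needed beyond part (1); the heart is the elementary fact that a nonzero element of $\mathbb{F}_p$ additively generates $\mathbb{F}_p$.
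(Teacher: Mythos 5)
Your proof is correct: part (1) is the direct computation of the $z$-action on a tensor product of one-dimensional modules, and the only nontrivial implication (iii)$\Rightarrow$(ii) follows, as you say, from the fact that any $a\in\mathbb{F}_p^\times$ generates $\mathbb{F}_p$ additively, together with part (1) and the compatibility of $p$-characters under tensoring from Lemma \ref{lem: 2.2.2}. The paper states this lemma without any proof (it is followed only by the remark that the isomorphisms are of $U_{\chi}(\frak{gl}_n)$-modules), so there is nothing to compare against; your argument is exactly the standard one the authors evidently had in mind.
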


 All the isomorphisms in Lemma \ref{lem: 2.2.3} are the isomorphisms of $U_{\chi}({\frak {gl}_n})$-modules.

\begin{proposition}\label{Prop: 2.2.4} Suppose that ${\chi}\in \frak {gl}_n^*$, $E$ is a simple $U_{\chi}(\frak {gl}_n)$-module, then
\begin{equation*}
	\dim \Hom_{\frak {sl}_n}({\res}_{\chi}E, {\res}_{\chi}E)=\begin{cases}
		1, & \text{if $E \ncong E \otimes \bbk_1$,}\\
		p, & \text{if $E \cong E \otimes \bbk_1$.}
	\end{cases}
\end{equation*}
\end{proposition}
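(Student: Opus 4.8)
The plan is to reduce the statement to a count of Hom spaces over $U_\chi(\gl_n)$ via Lemma~\ref{lem: 2.2.1}, and then invoke Schur's lemma together with Lemma~\ref{lem: 2.2.3}. First I would apply the decomposition \eqref{sum} together with the vector space isomorphism of Lemma~\ref{lem: 2.2.1} (taking $M=N=E$) to get
\[
\dim\Hom_{\sl_n}(\res_\chi E,\res_\chi E)=\sum_{a\in\bbf_p}\dim\Hom_{\gl_n}(E\otimes\bbk_a,E).
\]
Since $E$ is a simple $U_\chi(\gl_n)$-module, so is $E\otimes\bbk_a$ for each $a\in\bbf_p$ (as noted after Lemma~\ref{lem: 2.2.2}), and $\bbk=\bbk$ is algebraically closed, so Schur's lemma gives that each summand is either $0$ or $1$, and it equals $1$ precisely when $E\otimes\bbk_a\cong E$ as $U_\chi(\gl_n)$-modules.

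Next I would analyze the set $S:=\{a\in\bbf_p\mid E\otimes\bbk_a\cong E\}$. The $a=0$ term always contributes (since $\bbk_0$ is trivial, $E\otimes\bbk_0\cong E$), so $S$ is nonempty; thus the total dimension is at least $1$. By part~(1) of Lemma~\ref{lem: 2.2.3}, if $a,b\in S$ then $E\otimes\bbk_{a+b}\cong(E\otimes\bbk_a)\otimes\bbk_b\cong E\otimes\bbk_b\cong E$, so $S$ is closed under addition; being a nonempty additive subset of the cyclic group $\bbf_p$ of prime order, $S$ is a subgroup, hence either $S=\{0\}$ or $S=\bbf_p$. In the first case the sum has exactly one nonzero term and the dimension is $1$; in the second case all $p$ terms contribute and the dimension is $p$.

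Finally I would match these two cases with the dichotomy in the statement using the equivalences in part~(2) of Lemma~\ref{lem: 2.2.3}. If $E\ncong E\otimes\bbk_1$ then $1\notin S$, so $S\ne\bbf_p$, hence $S=\{0\}$ and the dimension is $1$. Conversely, if $E\cong E\otimes\bbk_1$ then $1\in S$, so $S=\bbf_p$ and the dimension is $p$; equivalently, by part~(2) of Lemma~\ref{lem: 2.2.3}, $E\cong E\otimes\bbk_1$ is the same condition as $E\cong E\otimes\bbk_a$ for all $a$, which is exactly $S=\bbf_p$. This completes the proof. The only mild subtlety — and the one point to state carefully rather than a genuine obstacle — is that all the isomorphisms $E\otimes\bbk_a\cong E$ must be taken as $U_\chi(\gl_n)$-module isomorphisms (not merely $\sl_n$-module or vector space isomorphisms), which is exactly the form in which Lemma~\ref{lem: 2.2.3} and the remark following Lemma~\ref{lem: 2.2.2} supply them, so the argument goes through cleanly.
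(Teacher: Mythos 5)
Your proposal is correct and follows essentially the same route as the paper: decompose $\Hom_{\sl_n}(\res_\chi E,\res_\chi E)$ into the $\mathbb{F}_p$-graded pieces via Lemma \ref{lem: 2.2.1} and \eqref{sum}, apply Schur's lemma to each summand $\Hom_{\gl_n}(E\otimes\bbk_a,E)$, and use the dichotomy of Lemma \ref{lem: 2.2.3}(2) to see that either only the $a=0$ term or all $p$ terms contribute. Your explicit subgroup argument for $S=\{a\mid E\otimes\bbk_a\cong E\}$ just re-derives the content of Lemma \ref{lem: 2.2.3}(2), which the paper invokes implicitly.
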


\begin{proof}
	$\Hom_{\frak {sl}_n}({\res}_{\chi}E, {\res}_{\chi}E) \cong \bigoplus_{a\in \mathbb{F}_p}\Hom_{\frak {gl}_n}(E \otimes \bbk_a, E)$
	\begin{equation*}
		=\begin{cases}
			\Hom_{{\frak {gl}_n}}(E \otimes \bbk_a, E), & \text{if $E \ncong E \otimes \bbk_1$,}\\
			 \Hom_{\frak {gl}_n}(E \otimes \bbk_a, E)^{\bigoplus p}, & \text{if $E \cong E \otimes \bbk_1$.}
		\end{cases}
	\end{equation*}
	Since $E$ is a simple $U_{\chi}(\frak {gl}_n)$-module, $\dim\Hom_{\frak {gl}_n}(E, E)=1$.
\end{proof}

\subsection{A criterion}
Let $\varpi_n=\varepsilon_1+\cdots+\varepsilon_n$ and $W$ be the Weyl group of $\gl_n$, then for any $\tau \in W$, we have $\tau(\varpi_n)=\varpi_n$. We assume that $\chi \in \frak {gl}_n^*$ is of standard Levi form with $I \subseteq \{\alpha_1,\ldots,\alpha_{n-1}\}$. We have the triangular decomposition $\frak {gl}_n=\frak n^-\oplus\frak h\oplus\frak n^+=\frak n^-\oplus\frak b$, and set $\Lambda=\sum_{i=1}^{n}\mathbb{F}_p\varepsilon_{i} \subsetneq \hhh^*$ as in \S\ref{SAC}.

\begin{lemma}\label{lem: 2.3.2} Let  $\lambda \in \Lambda$, $a \in\mathbb F_p$, then $L_{\chi}(\lambda) \otimes \bbk_a \cong L_{\chi}(\lambda+a \varpi_n)$.
\end{lemma}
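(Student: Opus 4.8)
The plan is to show both sides are simple quotients of the same (or isomorphic) baby Verma modules, and then invoke the uniqueness part of Lemma~\ref{lem: univ}. First I would observe that tensoring by the one-dimensional module $\bbk_a$ is an exact auto-equivalence of the category of $U_\chi(\gl_n)$-modules (its inverse is $-\otimes\bbk_{-a}$, using Lemma~\ref{lem: 2.2.3}(1)), so it sends simple modules to simple modules and, being compatible with the triangular decomposition, sends a highest-weight module to a highest-weight module with the weight shifted. Concretely, $\bbk_a$ as a $U_0(\bbb)$-module is $\bbk_{a\varpi_n}$ since $\nnn^+$ acts trivially and $\hhh$ acts by the character sending $E_{ii}\mapsto a$, i.e. by $a\varpi_n=a(\varepsilon_1+\cdots+\varepsilon_n)$; note $a\varpi_n\in\Lambda$ because $a\in\mathbb F_p$, and $\varpi_n$ is $W_I$-invariant (indeed $W$-invariant), which keeps everything inside the standard-Levi-form framework.

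The key computational step is the isomorphism of baby Verma modules
\begin{align*}
Z_\chi(\lambda)\otimes\bbk_a \;\cong\; Z_\chi(\lambda+a\varpi_n)
\end{align*}
as $U_\chi(\gl_n)$-modules. To see this, use that $Z_\chi(\lambda)=U_\chi(\gl_n)\otimes_{U_\chi(\bbb)}\bbk_\lambda$ is free over $U_\chi(\nnn^-)$ with the PBW basis $\{X_{-\beta_1}^{a_1}\cdots X_{-\beta_N}^{a_N}\otimes 1\}$; tensoring with $\bbk_a$ (on which $\nnn^-\subseteq\sl_n$ acts trivially and $z$ acts by $a$) and tracking the $\hhh$-weights shows the map $v_\lambda\otimes 1\mapsto (v_\lambda\otimes 1)\otimes 1$ extends to a $\gl_n$-module isomorphism onto $Z_\chi(\lambda+a\varpi_n)$, because the generator of the latter is a $\bbb$-eigenvector of weight $\lambda+a\varpi_n$ killed by $\nnn^+$, and by Frobenius reciprocity (or directly by the universal property of the induced module) such an eigenvector produces the required surjection, which is an isomorphism by a dimension count (both have dimension $p^{\dim\nnn^-}$). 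Then, since $-\otimes\bbk_a$ is exact and carries the unique maximal submodule of $Z_\chi(\lambda)$ to the unique maximal submodule of $Z_\chi(\lambda+a\varpi_n)$, it carries the head $L_\chi(\lambda)$ to the head $L_\chi(\lambda+a\varpi_n)$; hence $L_\chi(\lambda)\otimes\bbk_a\cong L_\chi(\lambda+a\varpi_n)$.

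The main obstacle I anticipate is purely bookkeeping: making the weight identification $\bbk_a\leftrightarrow\bbk_{a\varpi_n}$ precise at the level of $U_0(\bbb)$-modules and checking that the tensor product $Z_\chi(\lambda)\otimes\bbk_a$ really is induced from $\bbk_{\lambda+a\varpi_n}$ rather than merely abstractly isomorphic to it — i.e. verifying the compatibility of $-\otimes\bbk_a$ with the induction functor $U_\chi(\gl_n)\otimes_{U_\chi(\bbb)}-$. This follows from the tensor identity $(U_\chi(\gl_n)\otimes_{U_\chi(\bbb)}\bbk_\lambda)\otimes\bbk_a\cong U_\chi(\gl_n)\otimes_{U_\chi(\bbb)}(\bbk_\lambda\otimes\bbk_a)$, valid because $\bbk_a$ is the restriction of a $\gl_n$-module, and then $\bbk_\lambda\otimes\bbk_a\cong\bbk_{\lambda+a\varpi_n}$ by Lemma~\ref{lem: 2.2.3}(1) applied on $\bbb$. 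Once this is in hand the rest is formal, using only exactness of $-\otimes\bbk_a$ and the uniqueness of simple heads from Lemma~\ref{lem: univ}.
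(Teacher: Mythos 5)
Your proposal is correct and follows essentially the same route as the paper: establish $Z_\chi(\lambda)\otimes\bbk_a\cong Z_\chi(\lambda+a\varpi_n)$ by tracking the $\hhh$-weight of the generator, then use the invertibility of $-\otimes\bbk_a$ (inverse $-\otimes\bbk_{-a}$) to see that the simple head is carried to the simple head. The paper phrases the last step as ``any submodule of $L_\chi(\lambda)\otimes\bbk_a$ pulls back to a submodule of $L_\chi(\lambda)$,'' which is the same observation as your exact auto-equivalence argument.
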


\begin{proof}
	 Let $v_\lambda$ be the maximal vector of $Z_{\chi}(\lambda)$. Then for $h \in \frak h$, $h.(v_\lambda \otimes v_a)=h.v_\lambda \otimes v_a+v_\lambda \otimes h.v_a=\lambda(h)v_\lambda \otimes v_a+v_\lambda \otimes a \varpi_n(h)v_a=(\lambda+a \varpi_n)(h)v_\lambda \otimes v_a$. So it is easy to conclude that $Z_{\chi}(\lambda) \otimes \bbk_a \cong Z_{\chi}(\lambda+a \varpi_n)$.
	
	We just need to show that $L_{\chi}(\lambda) \otimes \bbk_a$ is simple. If $L_{\chi}(\lambda) \otimes \bbk_a$ has a submodule $M$, then $L_{\chi}(\lambda)$ has $M \otimes \bbk_{-a}$ as its submodule, this yields that $L_{\chi}(\lambda) \otimes \bbk_a$ is irreducible.
\end{proof}

From now on, we always assume that $\chi \in \frak {gl}_n^*$ is regular nilpotent in this section. The restriction $\chi|_{\sl_n}$ is still regular nilpotent. Actually, the meaning of $\chi$ being regular nilpotent is equivalent to the meaning of $\chi|_{\sl_n}$ being regular nilpotent.  Then ${\chi}$ has standard Levi form with $\{\alpha_1,\ldots,\alpha_{n-1}\}$, namely ${\chi}(X_{-\alpha_i})=1$ for $1 \leq i \leq {n-1}$ while ${\chi}(X_\alpha)=0$ for all other ${\alpha}\in R$ and ${\chi}(\frak h)=0$.

\begin{proposition}\label{prop: 2.3.3} For $\lambda \in \Lambda$, there is an isomophism $L_\chi(\lambda) \otimes \bbk_1 \cong L_\chi(\lambda)$ of $U_\chi(\frak {gl}_n)$-modules if and only if $\lambda \in W.0$.
\end{proposition}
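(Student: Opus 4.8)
The plan is to combine Lemma~\ref{lem: 2.3.2} with Lemma~\ref{lem: fundamental iso 2} (the $\gl_n$ classification of simple modules), using the fact that $\chi$ is regular nilpotent of standard Levi form associated with $I=\{\alpha_1,\ldots,\alpha_{n-1}\}$, so that $W_I=W$ is the whole Weyl group $\frak S_n$. First I would invoke Lemma~\ref{lem: 2.3.2} to rewrite the twist: $L_\chi(\lambda)\otimes\bbk_1\cong L_\chi(\lambda+\varpi_n)$, where $\varpi_n=\varepsilon_1+\cdots+\varepsilon_n$. Hence the asserted isomorphism $L_\chi(\lambda)\otimes\bbk_1\cong L_\chi(\lambda)$ is equivalent, via Lemma~\ref{lem: fundamental iso 2}, to the combinatorial condition $\lambda+\varpi_n\in W.\lambda$, i.e.\ there exists $w\in W$ with $w(\lambda+\rho)-\rho=\lambda+\varpi_n$, equivalently $w(\lambda+\rho)=\lambda+\rho+\varpi_n$.

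Next I would analyze this condition on $\hhh^*$. Writing $\lambda+\rho=\sum_{i=1}^n c_i\varepsilon_i$, the requirement is that some permutation $w\in\frak S_n$ sends the tuple $(c_1,\dots,c_n)$ to $(c_1+1,\dots,c_n+1)$; since $w$ merely permutes the entries, this forces the multisets $\{c_i\}$ and $\{c_i+1\}$ to coincide. Working modulo $p$ (recall $\lambda\in\Lambda=\mathscr X_p$, so the $c_i$ live in $\bbk$ but the relevant data is their reductions, and $\varpi_n$ is $p$-torsion in the appropriate sense), this says the multiset of residues $\{\bar c_1,\ldots,\bar c_n\}\subseteq\bbk$ is invariant under the shift $x\mapsto x+1$. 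Because $p\mid n$, a shift-invariant multiset of size $n$ in $\mathbb F_p$ is exactly a union of full $\mathbb F_p$-orbits (each of size $p$), i.e.\ every residue class occurs with the same multiplicity $n/p$. On the other hand, $\rho=\sum_{i=1}^{n-1}\varpi_i$ has coordinates $(n-1,n-2,\ldots,1,0)$, whose residues modulo $p$ run over all of $\mathbb F_p$ each exactly $n/p$ times; so $\lambda+\rho$ has the shift-invariance property precisely when $\lambda+\rho$ has the same multiset of residues as $\rho$, i.e.\ precisely when $\lambda+\rho\in W(\rho)$ (mod $p$), i.e.\ $\lambda\in W.0$. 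This gives both implications at once.

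The step I expect to be the main obstacle is making the passage to residues modulo $p$ fully rigorous: one must be careful that $\lambda\in\Lambda=X(T)/pX(T)$ while $\rho$ and $\varpi_n$ are genuine elements of $X(T)$, so the statement ``$\lambda+\varpi_n\in W.\lambda$'' is really a statement about $d\lambda\in\hhh^*$ and must be checked compatibly with the $\mathbb F_p$-structure on $\mathscr X_p$; and one must verify that the orbit $W.0$ (under the dot action) really does consist of all $\lambda\in\mathscr X_p$ whose coordinates have each residue class appearing with multiplicity $n/p$ — this uses $p\mid n$ crucially and is false for $p\nmid n$, which is consistent with the known fact that the proposition's conclusion changes in that case. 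Once the bookkeeping of residues is set up cleanly, the combinatorial core (shift-invariant multisets of size divisible by $p$ are unions of $\mathbb F_p$-cosets) is elementary, and the rest is just unwinding the two lemmas.
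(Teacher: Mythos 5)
Your proposal is correct and follows essentially the same route as the paper: both reduce, via Lemma \ref{lem: 2.3.2} and Lemma \ref{lem: fundamental iso 2} (with $W_I=W$), to the condition $w(\lambda+\rho)=\lambda+\rho+\varpi_n$ for some $w\in W$, and then identify this with the statement that each residue class in $\mathbb{F}_p$ occurs $n/p$ times among the coordinates of $\lambda+\rho$, which characterizes $W(\rho)$ and hence $W.0$. The only difference is presentational — the paper handles the forward direction by exhibiting the explicit $n$-cycle $\sigma=(12\cdots n)$ with $\varpi_n=\sigma.0$ and the converse by decomposing $\sigma$ into cycles whose lengths are forced to be divisible by $p$, whereas you package both directions as the observation that a shift-invariant multiset has constant multiplicity function — but the content is the same.
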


\begin{proof}
	Suppose $\lambda \in W.0$, this is to say, there exists $w \in W$ such that $\lambda=w.0$, i.e. $\lambda+\rho=w(\rho)$. Keep in mind $\rho=\sum_{j=1}^{n}(n-j)\varepsilon_j$. Because $p \mid n$, we have $n=0$ in $\mathbb{K}$. It follows that $\rho+\varpi_n=\sum_{j=1}^{n}(n+1-j)\varepsilon_j=n\varepsilon_1+\sum_{j=1}^{n-1}(n-j)\varepsilon_{j+1}=\sum_{j=1}^{n-1}(n-j)\varepsilon_{j+1}$. So there exists $\sigma=(12 \cdots n) \in W$ such that $\rho+\varpi_n=\sigma(\rho)$, i.e. $\varpi_n=\sigma.0$. Thus $\lambda+\varpi_n=\lambda+w(\varpi_n)=\lambda+w(\sigma(\rho)-\rho)=\lambda+w\sigma(\rho)-w(\rho)=w\sigma(\rho)-\rho=w\sigma w^{-1}(\lambda+\rho)-\rho$, namely $\lambda+\varpi_n=(w\sigma w^{-1}).\lambda$. So Lemma \ref{lem: fundamental iso 2} and Lemma \ref{lem: 2.3.2} yield that $L_\chi(\lambda) \otimes \bbk_1 \cong L_\chi(\lambda)$.
	
	Conversely, if $L_\chi(\lambda) \otimes \bbk_1 \cong L_\chi(\lambda)$, then there exists $\sigma \in W$ such that $\lambda=\sigma.(\lambda+\varpi_n)$ (By Lemma \ref{lem: fundamental iso 2} and Lemma \ref{lem: 2.3.2} again), namely $\lambda+\rho=\sigma(\lambda+\varpi_n+\rho)$. Assume that $\lambda+\rho=\sum_{i=1}^{n}a_i\varepsilon_i$ with $a_i \in \mathbb{F}_p$ ($1 \leq i \leq n$), then $\lambda+\rho+\varpi_n=\sum_{i=1}^{n}(a_i+1)\varepsilon_i$ and $\sigma^{-1}(\lambda+\rho)=\sum_{i=1}^{n}a_i\varepsilon_{\sigma^{-1}(i)}=\sum_{i=1}^{n}a_{\sigma(i)}\varepsilon_i$. Thus $\sigma^{-1}(\lambda+\rho)=\lambda+\varpi_n+\rho$ implies that 	
	\begin{equation}\label{eq: index}
	a_{\sigma(i)}=a_i+1 \text{ for } 1\leq i \leq n.
	\end{equation}
	Let $\sigma=(i_1^1 \cdots i_{s_1}^1)(i_1^2\cdots i_{s_2}^2)\cdots(i_1^m \cdots i_{s_m}^m)$ with $s_1+\cdots+s_m=n$, then $\sigma^{s_1}(i_1^1)=i_1^1$, we have $a_{\sigma ^{s_1}(i_1^1)}=a_{i_1^1}$. By (\ref{eq: index}), it follows that $a_{i_1^1}=a_{\sigma ^{s_1}(i_1^1)}=a_{\sigma ^{s_1-1}(i_1^1)}+1=\ldots=a_{i_1^1}+s_1$, so $p$ divides $s_1$. Similarly $p$ divides $s_j$ for $1\leq j\leq m$. Thanks to (\ref{eq: index}), there exists $b_1,\ldots,b_m \in \mathbb{F}_p$, such that $\mu=(b_1,b_1+1,\ldots,b_1+s_1-1,b_2,b_2+1,\ldots,b_2+s_2-1,\ldots,b_m,b_m+1,
\ldots,b_m+s_m-1) \in W(\lambda+\rho)$. Since $p$ divides $s_j$ with $1\leq j\leq m$, each element in $\mathbb{F}_p$ occurs $n/p$ times in $b_1,b_1+1,\ldots,b_1+s_1-1,b_2,b_2+1,\ldots,b_2+s_2-1,\ldots,b_m,b_m+1,\ldots,
b_m+s_m-1$. We finally conclude that each element in $\mathbb{F}_p$ occurs $n/p$ times from 0 to $n-1$. Correspondingly, there exists $\tau \in W$ such that $\mu=\tau(\rho)$, i.e. $\mu \in  W(\rho)$. So $\mu \in  W(\rho)$ and $\mu \in  W(\lambda+\rho)$ implies that $\lambda+\rho \in W(\rho)$. This means that there exists $w \in W$, such that $\lambda+\rho=w(\rho)$. This yields that $\lambda=w.0$, namely $\lambda \in W.0$.	
\end{proof}

\subsection{Some properties on the functors $\res$ and $\ind$}

In this section we always assume that $p \mid n$ and $\chi \in \frak {sl}_n^*$ is regular nilpotent.

\begin{proposition}\label{prop: 2.4.1} Let $M$ be an $U_{\chi}(\frak {sl}_n)$-module, then the composition factors of ${\res}_{\chi}$${\ind}_{\chi}M$ are all isomorphic to $M$.
\end{proposition}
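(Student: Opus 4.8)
The plan is to build an explicit $U_\chi(\sl_n)$-module filtration of $\res_\chi\ind_\chi M$ all of whose successive quotients are isomorphic to $M$; the assertion on composition factors is then immediate.

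First I would record the structural facts used throughout. Put $z=E_{11}$, so that $\gl_n=\sl_n\oplus\bbk z$ and $z^{[p]}=z$; hence in $U_\chi(\gl_n)$ the defining relation for $z$ reads $z^p=z+\chi(z)^p$. Since $\sl_n=[\gl_n,\gl_n]$ is an ideal of $\gl_n$, we have $[z,\gl_n]\subseteq\sl_n$, and therefore $(\ad(-z))^\ell(x)\in\sl_n$ for every $x\in\gl_n$ and every $\ell\ge1$. By PBW for restricted enveloping algebras, ordering a basis so that $z$ comes last, $U_\chi(\gl_n)$ is free as a \emph{right} $U_\chi(\sl_n)$-module with basis $1,z,\dots,z^{p-1}$; consequently
\[
\ind_\chi M=U_\chi(\gl_n)\otimes_{U_\chi(\sl_n)}M=\bigoplus_{i=0}^{p-1}z^i\otimes M
\]
as vector spaces, and in particular $\dim\ind_\chi M=p\dim M$.

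Next, set $F_k=\bigoplus_{i=0}^{k}z^i\otimes M$ for $0\le k\le p-1$ and check that each $F_k$ is an $\sl_n$-submodule. The input is the PBW normal-form identity
\[
xz^i=\sum_{\ell=0}^{i}\binom{i}{\ell}\,z^{i-\ell}\,(\ad(-z))^\ell(x),
\]
proved by an easy induction on $i$ from $xz=zx+[x,z]$. Because $(\ad(-z))^\ell(x)\in\sl_n$ for $\ell\ge1$, this gives $xz^i\in z^ix+\sum_{j<i}z^jU_\chi(\sl_n)$, so $x\cdot(z^i\otimes m)=z^i\otimes xm+(\text{terms in }F_{i-1})$. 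Thus $F_k$ is $\sl_n$-stable, and modulo $F_{k-1}$ the action is $x\cdot\overline{z^k\otimes m}=\overline{z^k\otimes xm}$; hence $z^k\otimes m\mapsto m$ defines an isomorphism $F_k/F_{k-1}\cong M$ of $U_\chi(\sl_n)$-modules (it carries the $p$-character $\chi$ because $M$ does and $F_k/F_{k-1}$ is a subquotient of the $U_\chi(\gl_n)$-module $\ind_\chi M$). The chain $0=F_{-1}\subseteq F_0\subseteq\cdots\subseteq F_{p-1}=\res_\chi\ind_\chi M$ is then a filtration with every subquotient $\cong M$; in particular, when $M$ is simple, $\res_\chi\ind_\chi M$ has a composition series of length $p$ with all factors isomorphic to $M$ (and for general $M$, every composition factor of $\res_\chi\ind_\chi M$ is a composition factor of $M$, appearing with $p$ times its multiplicity in $M$).

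I do not anticipate a genuine obstacle: the whole argument reduces to the PBW normal form and to $\sl_n\trianglelefteq\gl_n$. The only things to watch are keeping the module sides straight — the freeness of $U_\chi(\gl_n)$ over $U_\chi(\sl_n)$ that matters for the tensor product is on the \emph{right}, whereas the action we filter is the \emph{left} one — and the fact that $F_\bullet$ is only $\sl_n$-stable and not $\gl_n$-stable, since $z\cdot(z^i\otimes m)=z^{i+1}\otimes m$ shifts the grading; this is exactly why the statement concerns $\res_\chi\ind_\chi M$ rather than $\ind_\chi M$.
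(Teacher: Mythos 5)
Your proof is correct and follows essentially the same route as the paper: the paper's own argument is exactly the filtration $M_i=\sum_{0\le j\le i}h^j\otimes M$ with $h=E_{11}$ (your $z$), stated without the PBW justification that you supply. The only nitpick is that for freeness of $U_\chi(\gl_n)$ as a \emph{right} $U_\chi(\sl_n)$-module you want $z$ \emph{first} in the PBW ordering, not last, but as your closing remark shows you are aware of which side matters and the argument is unaffected.
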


\begin{proof}
	Let $M_i=\sum_{0 \leq j \leq i}h^j \otimes M \subseteq {\ind}_\chi M$ where $h=E_{1,1}$, we can see that $M_i$ is $U_\chi(\frak {sl}_n)$-module, not $U_\chi(\frak {gl}_n)$-module. Hence we get a $U_\chi(\frak {sl}_n)$ as follows:
	$$0=M_0 \subset M_1 \subset \cdots \subset M_p={\res}_{\chi}{\ind}_{\chi}M$$ with $M_i/M_{i-1} \cong M$.
\end{proof}

\begin{corollary}\label{cor: 2.4.2} Let $M$ be a $U_{\chi}(\frak {sl}_n)$-module, and denote by $[M]$ the corresponding element in the Grothendieck group of the $U_\chi(\sl_n)$-module category. Then
$[{\res}_{\chi}{\ind}_{\chi}M]=p[M]$.
\end{corollary}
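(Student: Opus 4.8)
The plan is to deduce the identity directly from the filtration exhibited in Proposition \ref{prop: 2.4.1}. Recall that $\gl_n=\sl_n\oplus\bbk h$ with $h=E_{11}$, and that $h^{[p]}=h$ since $E_{11}$ is idempotent; hence in $U_\chi(\gl_n)$ one has $h^p=h+\chi(h)^p$, so $\{h^j\mid 0\le j\le p-1\}$ is a basis of $U_\chi(\gl_n)$ as a free right $U_\chi(\sl_n)$-module. Consequently, as a $\bbk$-vector space,
\[
\res_\chi\ind_\chi M=\bigoplus_{j=0}^{p-1}h^j\otimes M .
\]

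First I would record the one point that is not purely formal: for every $x\in\sl_n$ the commutator $[x,h]$ has vanishing trace, hence lies in $\sl_n$, so $xh^j\equiv h^jx\pmod{\sum_{0\le\ell<j}h^\ell U_\chi(\sl_n)}$ in $U_\chi(\gl_n)$. Therefore $x\cdot(h^j\otimes m)\in\sum_{0\le\ell\le j}h^\ell\otimes M$, which is exactly what makes each $M_i:=\sum_{0\le j\le i}h^j\otimes M$ a $U_\chi(\sl_n)$-submodule of $\res_\chi\ind_\chi M$ and identifies $M_i/M_{i-1}$ with $M$ as $U_\chi(\sl_n)$-modules. This yields the chain
\[
0=M_0\subset M_1\subset\cdots\subset M_p=\res_\chi\ind_\chi M,\qquad M_i/M_{i-1}\cong M,
\]
which is precisely the content of Proposition \ref{prop: 2.4.1}.

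Finally I would pass to the Grothendieck group of the $U_\chi(\sl_n)$-module category: additivity of $[\,\cdot\,]$ along the short exact sequences $0\to M_{i-1}\to M_i\to M_i/M_{i-1}\to 0$ for $i=1,\dots,p$ gives $[\res_\chi\ind_\chi M]=\sum_{i=1}^{p}[M_i/M_{i-1}]=p[M]$. There is essentially no obstacle: the only step requiring any thought is the verification that the $M_i$ are genuine $U_\chi(\sl_n)$-submodules, and that has already been carried out in (the proof of) Proposition \ref{prop: 2.4.1}; the corollary is then a formal consequence.
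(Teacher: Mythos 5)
Your proposal is correct and follows the same route as the paper: the corollary is deduced from the filtration $0=M_0\subset M_1\subset\cdots\subset M_p=\res_\chi\ind_\chi M$ with $M_i/M_{i-1}\cong M$ of Proposition \ref{prop: 2.4.1}, together with additivity in the Grothendieck group. Your added verification that each $M_i$ is genuinely a $U_\chi(\sl_n)$-submodule (via $[x,h]\in\sl_n$ for $x\in\sl_n$) is a welcome detail that the paper leaves implicit.
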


\begin{proof}
	Since ${\dim}({\ind}_{\chi}M)=p\dim M$, it is obviously by Proposition \ref{prop: 2.4.1}.
\end{proof}

\begin{corollary}\label{cor: 2.4.3} Suppose that $M$ is a simple $U_{\chi}(\frak {gl}_n)$-module. Then there exist a simple $U_{\chi}(\frak {sl}_n)$-module $L$ and a positive integer $m$, such that $[{\res}_{\chi}M]=m[L]$.
\end{corollary}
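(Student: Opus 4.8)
The plan is to realize the simple module $M$ as a quotient of an induced module and then transport the isotypic structure recorded in Corollary~\ref{cor: 2.4.2} across the restriction functor. Since $U_\chi(\gl_n)$ is finite-dimensional, $M$ is finite-dimensional, and hence so is $\res_\chi M$; being nonzero, it contains a simple $U_\chi(\sl_n)$-submodule $L$. First I would use the inclusion $L \hookrightarrow \res_\chi M$, which is a nonzero element of $\Hom_{U_\chi(\sl_n)}(L, \res_\chi M)$, together with the adjunction \eqref{eq: 2.1} to produce a nonzero homomorphism $\ind_\chi L \to M$. Because $M$ is simple, this map is surjective, so $M$ is a quotient of $\ind_\chi L$.

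Next I would apply the exact functor $\res_\chi$. Restriction carries a quotient to a quotient, so $\res_\chi M$ is a quotient of $\res_\chi \ind_\chi L$; in particular the multiset of composition factors of $\res_\chi M$ is contained in that of $\res_\chi \ind_\chi L$. By Corollary~\ref{cor: 2.4.2} we have $[\res_\chi \ind_\chi L] = p[L]$, i.e. every composition factor of $\res_\chi \ind_\chi L$ is isomorphic to $L$. Consequently every composition factor of $\res_\chi M$ is isomorphic to $L$, which gives $[\res_\chi M] = m[L]$ with $m$ the composition length of $\res_\chi M$; this $m$ is a positive integer since $\res_\chi M \neq 0$.

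As for where care is needed: the one point to get right is to take $L$ as a genuine \emph{submodule} of $\res_\chi M$ rather than merely a composition factor, since only then does the inclusion furnish a nonzero Hom that the left adjoint $\ind_\chi$ can detect via \eqref{eq: 2.1}. Had I started instead from a simple quotient of $\res_\chi M$, I would need a right adjoint (coinduction), which is not the functor set up here. Beyond this subtlety the argument is essentially formal: the substantive input, namely that $\res_\chi \ind_\chi L$ is $L$-isotypic of length $p$, is exactly Corollary~\ref{cor: 2.4.2}, while everything else is the reciprocity \eqref{eq: 2.1}, the simplicity of $M$, and the exactness of $\res_\chi$.
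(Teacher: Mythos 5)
Your proposal is correct and follows essentially the same route as the paper: choose a simple submodule $L \hookrightarrow \res_{\chi}M$, apply the reciprocity \eqref{eq: 2.1} to obtain a nonzero map $\ind_{\chi}L \to M$, and then use the fact that all composition factors of $\res_{\chi}\ind_{\chi}L$ are isomorphic to $L$ (the paper cites Proposition \ref{prop: 2.4.1}, you cite its Grothendieck-group restatement, Corollary \ref{cor: 2.4.2}) to force every composition factor of $\res_{\chi}M$ to be $L$. Your observation that simplicity of $M$ upgrades the nonzero hom to a surjection, so that $\res_{\chi}M$ is a quotient of $\res_{\chi}\ind_{\chi}L$, is a slightly cleaner way of phrasing the paper's comparison $m'[L]=[\res_{\chi}M]+[E]$, but it is the same argument.
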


\begin{proof}
	Let $L$ be a irreducible submodule of ${\res}_{\chi}M$, then $\Hom_{U_{\chi}(\frak {sl}_n)}(L, {\res}_{\chi}M)\neq 0$. Using the reciprocity in \eqref{eq: 2.1}, we have
$$\Hom_{U_{\chi}(\frak {gl}_n)}({\ind}_{\chi}L, M)=\Hom_{U_{\chi}(\frak {sl}_n)}(L, {\res}_{\chi}M)\neq 0.$$
 so $M$ is a composition factor of ${\ind}_{\chi}L$. By Proposition \ref{prop: 2.4.1}, each composition factor of ${\res}_{\chi}{\ind}_{\chi}L$ is isomorphic to $L$. Thus, there exist an ${U_{\chi}(\frak {sl}_n)}$-module $E$ and a positive integer $m'$, such that $m'[L]=[{\res}_{\chi}{\ind}_{\chi}L]=[{\res}_{\chi}M]+[E]$. Thus there exists a positive integer $m \textless m'$, such that $[{\res}_{\chi}M]=m[L]$.
\end{proof}

\begin{remark} The proof of Corollary \ref{cor: 2.4.3} also implies that $L$ is a composition factor of ${\res}_{\chi}M$ if and only if $M$ is a composition factor of ${\ind}_{\chi}L$.
\end{remark}


\begin{lemma}\label{lem: 2.4.4} Let $A$ be a finite dimensional associative algebra over $\bbk$, $M$ be a finite dimensional $R$-module. Then $M$ is indecomposable if and only if $\text{End}_A(M)$ is local ring.
\end{lemma}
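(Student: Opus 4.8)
The plan is to prove this standard characterization (Fitting's Lemma together with the correspondence between idempotents and direct-sum decompositions). Throughout I write $E=\text{End}_A(M)$; since $M$ is finite-dimensional over $\bbk$ we have $E\subseteq\text{End}_{\bbk}(M)$ finite-dimensional, and this is the only finiteness input the argument needs. (Here I read the hypothesis as $M$ being a finite-dimensional $A$-module.)

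First I would dispose of the easy direction. Suppose $E$ is local, so its non-units form a proper two-sided ideal $\mathfrak{m}$. If $e\in E$ satisfies $e^2=e$, then $e$ and $1-e$ cannot both lie in $\mathfrak{m}$, since $e+(1-e)=1\notin\mathfrak{m}$; hence one of them is a unit, and a unit idempotent forces $e=1$ or $e=0$. As any decomposition $M=M_1\oplus M_2$ with both summands nonzero furnishes the projection onto $M_1$ as a nontrivial idempotent, the absence of nontrivial idempotents gives that $M$ is indecomposable.

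For the converse, assume $M$ is indecomposable. The key step is Fitting's Lemma: for any $f\in E$ there is $r\geq 1$ with $M=\ker(f^r)\oplus\text{Im}(f^r)$, which holds because the ascending chain of kernels and the descending chain of images of the powers of $f$ stabilize in the finite-dimensional space $M$. Indecomposability then forces $\ker(f^r)=0$ or $\text{Im}(f^r)=0$; in the first case $f^r$, and hence $f$, is bijective, i.e. a unit of $E$, while in the second case $f^r=0$, so $f$ is nilpotent. Thus every element of $E$ is either a unit or nilpotent, and in particular the set $N$ of non-units coincides with the set of nilpotent elements.

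It remains to show $N$ is a two-sided ideal, which is exactly the assertion that $E$ is local; this is the part demanding the most care. For absorption, if $f\in N$ and $g\in E$, then $gf$ cannot be a unit: an inverse $w$ of $gf$ would give $(wg)f=1$, a left inverse of $f$, and in a finite-dimensional algebra a one-sided inverse is a genuine inverse, contradicting that the nilpotent $f$ is a non-unit; the argument for $fg$ is symmetric. For additive closure, suppose $f,g\in N$ with $f+g=u$ a unit. Using that $N$ absorbs left multiplication, I may replace $f,g$ by $u^{-1}f,u^{-1}g\in N$, which sum to $1$; then $u^{-1}g=1-u^{-1}f$ with $u^{-1}f$ nilpotent, and $1-(\text{nilpotent})$ is invertible via the finite geometric series, contradicting $u^{-1}g\in N$. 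Hence $N$ is closed under addition, so it is a two-sided ideal comprising precisely the non-units, whence $E$ is local. I expect this final additive-closure step to be the main obstacle, since it is where the unit-or-nilpotent dichotomy from Fitting's Lemma and the invertibility of $1-(\text{nilpotent})$ must be combined.
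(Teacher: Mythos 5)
Your proof is correct and rests on exactly the same key tool as the paper's (very terse) proof, namely Fitting's Lemma for the finite-dimensional module $M$, giving the unit-or-nilpotent dichotomy in $\text{End}_A(M)$. The only difference is one of packaging: the paper invokes the standard characterization that a ring is local iff for every $x$ either $x$ or $1-x$ is a unit (which, combined with the dichotomy, finishes immediately), whereas you verify locality directly by showing the non-units form a two-sided ideal; that extra work (the absorption and additive-closure steps) is precisely the content of the characterization the paper cites, so the two arguments coincide in substance.
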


\begin{proof}
	Recall that $\text{End}_A(M)$ is a local ring if and only if for all $x \in A$, either $x$ or $1-x$ is unit. Then Fitting Lemma yields the following conclusion.
\end{proof}

\begin{lemma} \rm{(\cite[Theorem 1.3]{Rol})} \label{lem: 2.4.5}  Let $K$ be a $p$-group, $e$ be the identity element  of $K$. Assume that $A=\bigoplus_{x \in K}A_x$ is a $K$-graded algebra over $\bbk$. If $A_e$ is a local ring, so is $A$.
\end{lemma}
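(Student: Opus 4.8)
The plan is to reduce, via the \emph{graded} Jacobson radical, to the case where $A$ is graded‑semisimple, and then to invoke the graded Artin--Wedderburn structure theorem together with the vanishing of the Schur multiplier of a finite $p$‑group in characteristic $p$. We may assume $A$ is finite‑dimensional over $\bbk$, which is the situation in all our applications. Let $\mathfrak{N}$ be the sum of all graded nilpotent two‑sided ideals of $A$; since the sum of two graded nilpotent ideals is again graded and nilpotent and $A$ is Artinian, $\mathfrak{N}$ is itself a graded nilpotent ideal, so $\mathfrak{N}\subseteq J(A)$. Hence $A$ is local if and only if $A/\mathfrak{N}$ is local, while $(A/\mathfrak{N})_e=A_e/(A_e\cap\mathfrak{N})$ is a quotient of the local ring $A_e$, hence local. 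Replacing $A$ by $A/\mathfrak{N}$, we may therefore assume $A$ has no nonzero graded nilpotent ideal.

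Now the graded structure theorem gives $A\cong\prod_{k}M_{n_k}(D_k)$, a finite product of graded‑simple algebras, with each $D_k$ a graded division algebra supported on a (necessarily finite) subgroup $\Gamma_k\leq K$. Over the algebraically closed field $\bbk$ one has $(D_k)_e\cong\bbk$, so choosing a nonzero homogeneous element in each one‑dimensional graded component exhibits $D_k$ as a twisted group algebra $\bbk^{\sigma_k}\Gamma_k$ with class $[\sigma_k]\in H^{2}(\Gamma_k,\bbk^{\times})$. Here $\Gamma_k$ is a finite $p$‑group and $\mathrm{char}\,\bbk=p$, and $\bbk^{\times}$ is uniquely $p$‑divisible (roots exist since $\bbk$ is algebraically closed, and $x^{p}=1$ forces $x=1$ because $(x-1)^{p}=x^{p}-1=0$). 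Thus multiplication by $|\Gamma_k|$ is an automorphism of $H^{2}(\Gamma_k,\bbk^{\times})$; as it is also zero (since $\Gamma_k$ is finite), $H^{2}(\Gamma_k,\bbk^{\times})=0$, so $D_k\cong\bbk\Gamma_k$, a local ring with $D_k/J(D_k)\cong\bbk$.

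Write $M_{n_k}(D_k)=\mathrm{End}_{D_k}(V_k)$ for a graded free $D_k$‑module $V_k$ of rank $n_k$. Grouping the degree shifts of $V_k$ into cosets of $\Gamma_k$ and using that $D_k=\bbk\Gamma_k$ is \emph{untwisted}, a direct matrix‑unit computation identifies the degree‑$e$ component $\bigl(M_{n_k}(D_k)\bigr)_e$ with a finite product of full matrix algebras over $\bbk$. Since such a product is local precisely when it consists of a single block equal to $\bbk$, the locality of $A_e=\prod_k\bigl(M_{n_k}(D_k)\bigr)_e$ forces exactly one index $k$ to occur, with $n_k=1$ and all shifts of $V_k$ in a single coset; hence $A\cong D_k\cong\bbk\Gamma$ for a finite $p$‑subgroup $\Gamma\leq K$. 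As the augmentation ideal of $\bbk\Gamma$ is nilpotent with quotient $\bbk$, the ring $\bbk\Gamma$ is local, and therefore so is $A$.

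The point that requires care --- and which I expect to be the main obstacle --- is that the ordinary Jacobson radical $J(A)$ need \emph{not} be a graded ideal once $\mathrm{char}\,\bbk$ divides $|K|$; already for $A=\bbk[\bbz/p\bbz]$, graded by $\bbz/p\bbz$, the radical $J(A)$ is nonzero but is not a sum of homogeneous components, so one cannot pass to $A/J(A)$ directly. This is why the argument must be routed through the graded radical and graded Wedderburn theory rather than the ordinary ones. The $p$‑group hypothesis on $K$ enters decisively only through the vanishing $H^{2}(\Gamma,\bbk^{\times})=0$ in characteristic $p$: without it one would meet nontrivial twisted group algebras $\bbk^{\sigma}\Gamma$, which can be full matrix algebras, and the statement would fail.
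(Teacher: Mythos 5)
The paper offers no proof of this lemma at all --- it is quoted verbatim from Farnsteiner \cite[Theorem 1.3]{Rol} --- so there is no internal argument to measure yours against; judged on its own, your proof is correct in the finite-dimensional case, and that restriction is harmless here since the lemma is only ever applied (in Proposition \ref{prop: 2.4.6}) to $\mathrm{End}_{\frak{sl}_n}(M)$ for finite-dimensional $M$, graded by the finite group $\mathbb{F}_p$. The route you take --- pass to the graded radical $\mathfrak{N}$, apply graded Artin--Wedderburn, recognize each graded division algebra as a twisted group algebra $\bbk^{\sigma}\Gamma$ over its (finite) support subgroup $\Gamma\leq K$, and kill the twist using that $H^{2}(\Gamma,\bbk^{\times})=0$ because $\bbk^{\times}$ is uniquely $p$-divisible in characteristic $p$ --- is complete: the coset decomposition of $\bigl(\mathrm{End}_{D}(V)\bigr)_e$ into matrix blocks over $D_e=\bbk$ and the idempotent argument forcing a single $1\times 1$ block are both sound, and your closing observation that $J(A)$ need not be a graded ideal when $p$ divides $|K|$ correctly identifies why one must route the argument through the graded theory and where the $p$-group hypothesis is genuinely used. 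What the citation buys the paper is brevity and possibly greater generality (your reduction through a nilpotent graded radical and graded Artin--Wedderburn leans on $A$ being Artinian, so an infinite-dimensional version would need a different argument, e.g.\ one phrased purely in terms of idempotents or the graded Jacobson radical of an arbitrary graded ring); what your proof buys is self-containedness, making visible exactly which hypotheses --- $K$ a $p$-group, $\operatorname{char}\bbk=p$, $\bbk$ algebraically closed --- are doing the work.
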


\begin{proposition}\label{prop: 2.4.6} Let ${\chi}\in \frak {gl}_n^*$, $M$ be an indecomposable $U_{\chi}(\frak {gl}_n)$-module. Then ${\res}_{\chi}M$ is also an indecomposable $U_{\chi}(\frak {sl}_n)$-module.
\end{proposition}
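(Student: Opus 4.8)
The plan is to realize $U_\chi(\gl_n)$ as a $\bbz/p$-graded algebra over $U_\chi(\sl_n)$ and then invoke Lemmas \ref{lem: 2.4.4} and \ref{lem: 2.4.5}. Concretely, set $z=E_{11}$ as before, so $\gl_n=\sl_n\oplus\bbk z$ and $z^{[p]}=z$ in $U_\chi(\gl_n)$ (note $\chi(z)$ contributes a scalar, but after the standard normalization $z^p-z$ is central). The operator $\ad z$ acts semisimply on $U_\chi(\gl_n)$ with eigenvalues in $\bbf_p$, since $z^p-z$ is central and acts by a scalar on the finite-dimensional algebra; this gives a decomposition $U_\chi(\gl_n)=\bigoplus_{a\in\bbf_p}U_\chi(\gl_n)_a$ where $U_\chi(\gl_n)_a=\{x\mid [z,x]=ax\}$, and one checks $U_\chi(\gl_n)_a\cdot U_\chi(\gl_n)_b\subseteq U_\chi(\gl_n)_{a+b}$, so this is a grading by the $p$-group $K=\bbz/p$ with $U_\chi(\gl_n)_0\supseteq U_\chi(\sl_n)$. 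In fact $U_\chi(\gl_n)_0=U_\chi(\sl_n)[z^p-z]$, but modulo the central relation $z^p-z=\chi(z)^p$ (a scalar), we get $U_\chi(\gl_n)_0=U_\chi(\sl_n)$ exactly.

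First I would establish this grading precisely, being careful about the $p$-character normalization so that the degree-zero component is genuinely $U_\chi(\sl_n)$. Next, given an indecomposable $U_\chi(\gl_n)$-module $M$, I want to show $\res_\chi M$ is indecomposable as a $U_\chi(\sl_n)$-module; by Lemma \ref{lem: 2.4.4} this is equivalent to $\operatorname{End}_{U_\chi(\sl_n)}(\res_\chi M)$ being a local ring. The key observation is that $A:=\operatorname{End}_{U_\chi(\sl_n)}(\res_\chi M)$ carries a natural $\bbf_p$-grading: by \eqref{sum}, $A=\bigoplus_{a\in\bbf_p}\Hom_{\sl_n}(\res_\chi M,\res_\chi M)_a$, where the grading is by the $z$-action $(z.f)(m)=z.f(m)-f(z.m)$, and one verifies that composition respects this grading, i.e. $A_a\cdot A_b\subseteq A_{a+b}$. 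Moreover, by Lemma \ref{lem: 2.2.1}, $A_0=\Hom_{\sl_n}(\res_\chi M,\res_\chi M)_0\cong\Hom_{\gl_n}(M,M)=\operatorname{End}_{U_\chi(\gl_n)}(M)$, which is a local ring precisely because $M$ is indecomposable (Lemma \ref{lem: 2.4.4} again).

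Then I would apply Lemma \ref{lem: 2.4.5} with the $p$-group $K=\bbz/p$ and the $K$-graded algebra $A=\bigoplus_{a\in\bbf_p}A_a$: since the identity component $A_0\cong\operatorname{End}_{U_\chi(\gl_n)}(M)$ is local, the lemma gives that $A$ itself is local, hence $\res_\chi M$ is indecomposable by Lemma \ref{lem: 2.4.4}. This completes the argument.

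The main obstacle I anticipate is verifying that the $\bbf_p$-grading on $A=\operatorname{End}_{U_\chi(\sl_n)}(\res_\chi M)$ is genuinely a $K$-grading in the sense required by Lemma \ref{lem: 2.4.5} — in particular that it is a grading as an \emph{algebra} (composition-compatible) and that the identity endomorphism lands in degree $0$ — and pinning down the identification $A_0\cong\operatorname{End}_{U_\chi(\gl_n)}(M)$ as a ring isomorphism (not merely a linear one) so that locality transfers. The linear isomorphism $A_0\cong\Hom_{\gl_n}(M,M)$ from Lemma \ref{lem: 2.2.1} must be checked to be multiplicative: an $\sl_n$-map $f\colon M\to M$ with $z.f=0$ means $z.f(m)=f(z.m)$ for all $m$, i.e. $f$ is actually $\gl_n$-linear, so the identification is just "$A_0$ consists of the $\gl_n$-endomorphisms of $M$", which is manifestly a subring; this should go through cleanly once the bookkeeping with the $p$-character is handled. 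Everything else is a direct application of the cited lemmas.
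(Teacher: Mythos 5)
Your proposal follows essentially the same route as the paper: decompose $\operatorname{End}_{\frak{sl}_n}(\res_\chi M)$ into $z$-eigenspaces via \eqref{sum}, identify the degree-zero component with $\operatorname{End}_{\frak{gl}_n}(M)$ (local by Lemma \ref{lem: 2.4.4} since $M$ is indecomposable), and apply Lemma \ref{lem: 2.4.5} with $K=\bbz/p$ followed by Lemma \ref{lem: 2.4.4} again. The only inaccuracy is your side remark that $U_\chi(\gl_n)_0=U_\chi(\sl_n)$ ``exactly'' --- the degree-zero component still contains $z$ and is in fact free of rank $p$ over $U_\chi(\sl_n)$ --- but this claim plays no role in the argument, which correctly grades the endomorphism algebra rather than the enveloping algebra.
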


\begin{proof} Note that
	$\text{End}_{\frak {sl}_n}(M)=\bigoplus_{a\in \mathbb{F}_p}\text{End}_{\frak {sl}_n}(M)_a=\bigoplus_{a\in \mathbb{F}_p}\text{End}_{\frak {gl}_n}(M \otimes \bbk_a, M)$. So $\text{End}_{\frak {sl}_n}(M)$ is $\mathbb{F}_p$-graded. As $M$ is an indecomposable $U_{\chi}(\frak {gl}_n)$-module, $\text{End}_{\frak {sl}_n}(M)_0=\text{End}_{\frak {gl}_n}(M)$ is local ring(by Lemma \ref{lem: 2.4.4}). By appling Lemma \ref{lem: 2.4.5}, we have $\text{End}_{\frak {gl}_n}(M)$ is a local ring. Now Lemma \ref{lem: 2.4.4} yields the conclusion.
\end{proof}

\begin{proposition}\label{prop: 2.4.7} If $P$ is an indecomposable projective module over $U_{\chi}(\frak {gl}_n)$, then ${\res}_{\chi}P$ is also an indecomposable projective module over $U_{\chi}(\frak {sl}_n)$.
\end{proposition}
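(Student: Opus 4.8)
The plan is to reduce to Proposition~\ref{prop: 2.4.6}, which already yields the indecomposability of $\res_\chi P$, and to prove separately that $\res_\chi$ sends projective $U_\chi(\gl_n)$-modules to projective $U_\chi(\sl_n)$-modules. The key structural input for the latter is that $U_\chi(\gl_n)$ is free of rank $p$ as a module over $U_\chi(\sl_n)$.

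First I would record that $\sl_n$ is a restricted (in particular $p$-closed) subalgebra of $\gl_n$, so that $U_\chi(\sl_n)$ genuinely embeds in $U_\chi(\gl_n)$: for $x\in\sl_n$ one has $x^{[p]}=x^{p}$, whose eigenvalues over $\overline{\bbk}$ are the $p$-th powers of those of $x$, so $\operatorname{tr}(x^{[p]})=(\operatorname{tr}x)^{p}=0$. Then, fixing a $\bbk$-basis of $\gl_n$ that lists a basis of $\sl_n$ first and $z=E_{11}$ last, and ordering it accordingly, the PBW basis of $U_\chi(\gl_n)$ is the union over $0\le a\le p-1$ of $\{u\,z^{a}\}$, with $u$ running through the PBW basis of $U_\chi(\sl_n)$. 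Hence
\begin{align*}
U_\chi(\gl_n)=\bigoplus_{a=0}^{p-1}U_\chi(\sl_n)\,z^{a}
\end{align*}
as left $U_\chi(\sl_n)$-modules, so that $\res_\chi U_\chi(\gl_n)\cong U_\chi(\sl_n)^{\oplus p}$.

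The remainder is formal. Being projective over the finite-dimensional algebra $U_\chi(\gl_n)$, $P$ is a direct summand of a finite free module $U_\chi(\gl_n)^{\oplus k}$; applying the additive functor $\res_\chi$ and the isomorphism just displayed exhibits $\res_\chi P$ as a direct summand of $U_\chi(\sl_n)^{\oplus pk}$, hence as a projective $U_\chi(\sl_n)$-module. Since $P$ is in particular indecomposable, Proposition~\ref{prop: 2.4.6} gives that $\res_\chi P$ is indecomposable, which completes the argument.

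I expect the only non-routine point to be the freeness of $U_\chi(\gl_n)$ over $U_\chi(\sl_n)$, i.e.\ the combination of the $p$-closedness of $\sl_n$ in $\gl_n$ with the choice of a PBW ordering singling out $z$; once this is in place the summand-of-free argument is standard. One could instead note that $\res_\chi$ has the coinduction functor $\Hom_{U_\chi(\sl_n)}(U_\chi(\gl_n),-)$ as an exact right adjoint --- exact precisely because $U_\chi(\gl_n)$ is $U_\chi(\sl_n)$-free --- so $\res_\chi$ preserves projectives; but the computation above is shorter.
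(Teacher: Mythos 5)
Your proposal is correct and follows the same route as the paper: the paper's proof consists precisely of noting that $U_{\chi}(\frak{gl}_n)$ is free over $U_{\chi}(\frak{sl}_n)$, deducing that $\res_{\chi}P$ is projective, and invoking Proposition \ref{prop: 2.4.6} for indecomposability. You have merely supplied the details of the freeness claim (the $p$-closedness of $\frak{sl}_n$ and the PBW decomposition $U_\chi(\frak{gl}_n)=\bigoplus_{a=0}^{p-1}U_\chi(\frak{sl}_n)z^a$), which the paper leaves implicit.
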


\begin{proof} Note that $U_{\chi}(\frak {gl}_n)$ is free over $U_{\chi}(\frak {sl}_n)$. So
	${\res}_{\chi}P$ is a projective $U_{\chi}(\frak {sl}_n)$-module. Proposition \ref{prop: 2.4.6} yields that ${\res}_{\chi}P$ is indecomposable.
\end{proof}

As a corollary, we have
\begin{corollary}\label{cor: 2.4.8} Let $M$ be a simple $U_{\chi}(\frak {gl}_n)$-module, $P_M$ be its projective cover. Then there exist a simple $U_{\chi}(\frak {sl}_n)$-module $E$, such that ${\res}_{\chi}P_M$ is its projective cover.
\end{corollary}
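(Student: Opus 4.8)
The plan is to deduce Corollary~\ref{cor: 2.4.8} directly from Proposition~\ref{prop: 2.4.7} together with the elementary functorial properties of $\res_\chi$ and $\ind_\chi$ established earlier. Recall that $P_M$ is by definition an indecomposable projective $U_\chi(\gl_n)$-module with $P_M/\mathrm{rad}\,P_M\cong M$. By Proposition~\ref{prop: 2.4.7}, $\res_\chi P_M$ is an indecomposable projective $U_\chi(\sl_n)$-module; an indecomposable projective module over a finite dimensional algebra is the projective cover of its own head, so it suffices to identify $E:=\res_\chi P_M\big/\mathrm{rad}(\res_\chi P_M)$ as a simple $U_\chi(\sl_n)$-module, and then Corollary~\ref{cor: 2.4.3} (or the adjointness~\eqref{eq: 2.1}) will show $E$ is in fact a constituent of $\res_\chi M$.

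First I would observe that the radical is preserved by restriction in the following weak sense: since $\res_\chi$ is exact and $U_\chi(\gl_n)$ is free (hence faithfully flat) over $U_\chi(\sl_n)$, applying $\res_\chi$ to the surjection $P_M\twoheadrightarrow M$ gives a surjection $\res_\chi P_M\twoheadrightarrow \res_\chi M$. Now $\res_\chi M$ need not be simple, but by Corollary~\ref{cor: 2.4.3} there is a simple $U_\chi(\sl_n)$-module $E$ and $m\geq 1$ with $[\res_\chi M]=m[E]$; in particular $E$ is a quotient of $\res_\chi M$, hence a quotient of $\res_\chi P_M$. Since $\res_\chi P_M$ is indecomposable projective, its head is simple, so the head of $\res_\chi P_M$ must be exactly this $E$. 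Therefore $\res_\chi P_M$ is a projective module with simple head $E$, i.e.\ it is the projective cover of $E$, which is the assertion.

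Alternatively, and perhaps more cleanly, I would argue entirely via adjointness: for any simple $U_\chi(\sl_n)$-module $L$ we have $\Hom_{U_\chi(\sl_n)}(\res_\chi P_M, L)\cong \Hom_{U_\chi(\gl_n)}(P_M, \ind_\chi L)$, but one must be careful that $\ind_\chi$ is the \emph{left} adjoint of $\res_\chi$, not the right one, so~\eqref{eq: 2.1} goes the wrong way for this. The safe route is the one in the previous paragraph, using only exactness of $\res_\chi$, freeness of $U_\chi(\gl_n)$ over $U_\chi(\sl_n)$, Proposition~\ref{prop: 2.4.7}, and Corollary~\ref{cor: 2.4.3}.

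The only genuine subtlety — and the step I would be most careful about — is the identification of the head of $\res_\chi P_M$ with the simple module $E$ appearing in Corollary~\ref{cor: 2.4.3}, rather than with some unrelated simple. This is forced by the fact that $\res_\chi P_M\twoheadrightarrow \res_\chi M$ and $\res_\chi M$ has $E$ as its unique simple quotient (being $[\,\cdot\,]$-equal to $m[E]$ and hence having every composition factor isomorphic to $E$); combined with the indecomposability-projectivity of $\res_\chi P_M$ from Proposition~\ref{prop: 2.4.7}, there is no room for any other simple to appear in the head. Everything else is formal. A short remark afterward could note that in fact $[\res_\chi P_M] = m\,[P_E]$ for the appropriate multiplicity, matching Corollary~\ref{cor: 2.4.2}, but that is not needed for the statement.
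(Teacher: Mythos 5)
Your argument is correct and takes essentially the same route as the paper, which states this result as an immediate corollary of Proposition~\ref{prop: 2.4.7} via the standard fact that an indecomposable projective module over a finite dimensional algebra is the projective cover of its simple head. Your additional identification of $E$ with the unique composition factor of $\res_{\chi}M$ from Corollary~\ref{cor: 2.4.3} goes beyond what the corollary literally asserts, but it is valid (and is precisely what the paper later relies on implicitly in the proof of Theorem~\ref{thm: 2.4.12}), and your caution about \eqref{eq: 2.1} being a left- rather than right-adjunction is well placed.
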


\begin{lemma}\label{lem: 2.4.9}(\cite[Lemma 1.7.6]{Ben}) Let $A$ be a finite dimensional associative algebra over $\bbk$, $E$ be a simple module over $A$. Suppose that $M$ is an $A$-module with finite length which is denoted by $l(M)$, $P_E$ is the projective cover of $E$. Then $$[M:E]=\dim\Hom_A(P_E, M).$$
\end{lemma}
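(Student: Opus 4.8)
The plan is to prove the identity by induction on the length $l(M)$, using two ingredients: exactness of the functor $\Hom_A(P_E,-)$ (because $P_E$ is projective) and additivity of the composition multiplicity $[-:E]$ on short exact sequences (Jordan--H\"older). Neither ingredient is deep; the whole lemma is a standard fact in the representation theory of finite dimensional algebras, and one could simply invoke \cite[Lemma 1.7.6]{Ben}, but it is cheap to include the argument.

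First I would settle the base case. If $M=0$, both sides are $0$. If $M$ is simple, then any nonzero $\varphi\in\Hom_A(P_E,M)$ is surjective, and since $\varphi(\mathrm{rad}(P_E))\subseteq\mathrm{rad}(M)=0$, it factors through the top $P_E/\mathrm{rad}(P_E)$, which is $E$ because $P_E$ is the \emph{projective cover} of $E$ (this is where that hypothesis is used). Hence $\Hom_A(P_E,M)\cong\Hom_A(E,M)$, and by Schur's lemma over the algebraically closed field $\bbk$ this space is $1$-dimensional if $M\cong E$ and $0$ otherwise, which is exactly $[M:E]$.

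For the inductive step, choose a simple submodule $M'\subseteq M$ and put $M''=M/M'$, so $l(M'),l(M'')<l(M)$. Applying $\Hom_A(P_E,-)$ to $0\to M'\to M\to M''\to 0$ and using $\mathrm{Ext}^1_A(P_E,M')=0$ gives a short exact sequence of $\bbk$-vector spaces, whence
\[
\dim\Hom_A(P_E,M)=\dim\Hom_A(P_E,M')+\dim\Hom_A(P_E,M'').
\]
On the other hand $[M:E]=[M':E]+[M'':E]$ by Jordan--H\"older. The induction hypothesis applied to $M'$ and $M''$ then yields $[M:E]=\dim\Hom_A(P_E,M)$. The only point that requires a moment's care is the base-case identification $\Hom_A(P_E,E)\cong\bbk$; everything else is bookkeeping.
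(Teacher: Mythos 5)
Your proof is correct. The paper itself gives no argument for this lemma --- it is stated with a bare citation to \cite[Lemma 1.7.6]{Ben} --- so there is nothing internal to compare against; your induction on $l(M)$, with the base case handled by factoring every map $P_E\to M$ through the top $P_E/\mathrm{rad}(P_E)\cong E$ and applying Schur's lemma over the algebraically closed field $\bbk$, is exactly the standard textbook argument and is complete. The one hypothesis worth flagging is algebraic closedness of $\bbk$ (so that $\Hom_A(E,E)$ is one-dimensional); without it the correct statement would be $[M:E]=\dim\Hom_A(P_E,M)/\dim\Hom_A(E,E)$, but in the setting of this paper your version is the right one.
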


Keep in mind that we have triangular decomposition $\frak {gl}_n=\frak n \oplus \frak h \oplus \frak {n^-}, \frak {sl}_n=\frak n \oplus \frak h' \oplus \frak {n^-}$ where $\hhh'$ is a codimension-one subspace (subalgebra) of $\hhh$ with $\hhh=\hhh' \oplus \mathbb{K}z$. Let $W$ be the Weyl group of $\sl_n$.

\begin{lemma}\label{lem: 2.4.10}
Let $\chi \in \frak {gl}^*_n, \lambda \in \frak h^*$. The following statements hold.
\begin{itemize}
\item[(1)] $\chi \in \frak {gl}^*_n$ is regular(resp. subregular) nilpotent if and only if $\chi|_{\frak {sl}_n} \in \frak {sl}_n^*$ is regular(resp. subregular) nilpotent;
\item[(2)] $\lambda \in W.0 \subset \frak h^*$ if and only if $\lambda |_{\hhh'} \in W.0 \subset \frak h'^*$;
\item[(3)] If $\chi$ is nilpotent and $\lambda |_{\frak h'} \in \mathscr X_p:=\sum_{i=1}^{n-1}\mathbb{F}_p\alpha_i|_{\frak {h}'}$, then ${\res}_{\chi}Z_\chi(\lambda)=Z_{\chi |_{\frak {sl}_n}}(\lambda |_{\frak h'})$.
    \end{itemize}
\end{lemma}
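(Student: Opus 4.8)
I would prove the three parts separately. Parts (2) and (3) come down to a combinatorial computation and a dimension count; the only step demanding genuine care is part (1), namely checking that restriction to $\frak{sl}_n$ carries the coadjoint nilpotent cone of $\frak{gl}_n^*$ bijectively and $\SL_n$-equivariantly onto that of $\frak{sl}_n^*$, compatibly with Lusztig's stratification, so that regularity and subregularity are detected after restriction.

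For (1), the plan is to study $r\colon\frak{gl}_n^*\to\frak{sl}_n^*$, $\chi\mapsto\chi|_{\frak{sl}_n}$. It is $\SL_n$-equivariant since $\frak{sl}_n$ is a $G$-stable ideal of $\frak{gl}_n$, and, identifying $\frak{gl}_n^*$ with $\frak{gl}_n$ through the nondegenerate trace form, it becomes the projection $\frak{gl}_n\to\frak{gl}_n/\bbk I_n\cong\frak{sl}_n^*$. I would then show $r$ restricts to an $\SL_n$-equivariant bijection $\mathscr N_{\frak{gl}_n^*}\to\mathscr N_{\frak{sl}_n^*}$: a coset $M+\bbk I_n$ is coadjoint nilpotent exactly when $M-cI_n$ is nilpotent for some $c\in\bbk$ (one conjugates it to an upper triangular matrix with constant diagonal), so it contains a genuinely nilpotent matrix, which is the unique nilpotent element of the coset since two nilpotent matrices lying in one coset share the eigenvalue $0$ and hence coincide. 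As Lusztig's partition of $\mathscr N_{\frak{sl}_n^*}$ is by $\SL_n$-orbits, $r$ induces a bijection of strata, and comparing \eqref{eq: regular} and \eqref{eq: subregular} shows that $r$ sends the standard regular (resp.\ subregular) representative of $\frak{gl}_n^*$ to that of $\frak{sl}_n^*$, since those relations constrain only the root vectors $X_{-\alpha_i}\in\frak{sl}_n$, the vanishing on the remaining $\ggg_\alpha\subset\frak{sl}_n$, and the vanishing on $\hhh\supseteq\hhh'$. As $\chi$ is (throughout the paper) a nilpotent functional, assertion (1) is exactly this bijection restricted to the regular, resp.\ subregular, stratum.

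For (2), I would work inside $\mathscr X_p$, so that all $\varepsilon$-coordinates lie in $\mathbb{F}_p$. Recall $\rho=\sum_{j=1}^{n}(n-j)\varepsilon_j$ and $\varpi_n=\varepsilon_1+\cdots+\varepsilon_n$, that $W=\frak S_n$ permutes the $\varepsilon_j$ and fixes $\varpi_n$, and that the restriction $\hhh^*\to(\hhh')^*$ is $W$-equivariant and surjective with kernel $\bbk\varpi_n$ (because $\varpi_n|_{\hhh'}=0$ and $\hhh'$ has codimension one). Since $\lambda\in W.0$ means $\lambda+\rho\in W\rho$, applying the restriction map gives the forward implication. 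Conversely, $(\lambda+\rho)|_{\hhh'}\in W(\rho|_{\hhh'})$ forces $\lambda+\rho=w(\rho)+c\,\varpi_n$ for some $w\in W$ and $c\in\mathbb{F}_p$; applying $w^{-1}$ and using $W\varpi_n=\varpi_n$, it remains to check $\rho+c\,\varpi_n\in W\rho$, i.e.\ that $\{c,c+1,\dots,c+n-1\}$ and $\{0,1,\dots,n-1\}$ coincide as multisets in $\bbk$. This is precisely where $p\mid n$ is used: each multiset reduces to every residue of $\mathbb{F}_p$ taken $n/p$ times. Hence $\lambda+\rho\in W\rho$, so $\lambda\in W.0$.

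For (3), set $\chi'=\chi|_{\frak{sl}_n}$, $\lambda'=\lambda|_{\hhh'}$ and let $v_\lambda=1\otimes1$ generate $Z_\chi(\lambda)$. Since $\nnn^-\subset\frak{sl}_n$ and $U_\chi(\frak{gl}_n)=U_\chi(\nnn^-)\,U_\chi(\bbb)$ by PBW, one has $Z_\chi(\lambda)=U_\chi(\nnn^-)\,v_\lambda=U_\chi(\frak{sl}_n)\,v_\lambda$, so $\res_\chi Z_\chi(\lambda)$ is generated as a $U_{\chi'}(\frak{sl}_n)$-module by $v_\lambda$, which satisfies $\nnn^+v_\lambda=0$ and $h.v_\lambda=\lambda'(h)v_\lambda$ for $h\in\hhh'$, the point being that $\bbk_\lambda$ restricts to $\bbk_{\lambda'}$ over $U_{\chi'}(\hhh'\oplus\nnn^+)$. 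The universal property of the baby Verma module then gives a surjection $Z_{\chi'}(\lambda')\twoheadrightarrow\res_\chi Z_\chi(\lambda)$, and since $\dim Z_\chi(\lambda)=p^{\dim\nnn^-}=p^{\abs{R^+}}=\dim Z_{\chi'}(\lambda')$ (both free of rank $p^{\dim\nnn^-}$ over $U_\chi(\nnn^-)$) this surjection is an isomorphism, which with $v_\lambda$ taken as the distinguished generator is the claimed equality.
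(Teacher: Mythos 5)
Your proposal is correct, and it is considerably more complete than what the paper itself records. The paper's entire proof of Lemma \ref{lem: 2.4.10} is the single sentence asserting that $\lambda=w.\mu$ if and only if $\lambda|_{\hhh'}=w.\mu|_{\hhh'}$, offered only for part (2); parts (1) and (3) receive no argument at all, and that sentence is not literally true, since $\lambda|_{\hhh'}=(w.\mu)|_{\hhh'}$ only forces $\lambda-w.\mu\in\bbk\varpi_n$. The genuine content of the converse direction of (2) is exactly your multiset computation that $\{c,c+1,\ldots,c+n-1\}$ and $\{0,1,\ldots,n-1\}$ agree in $\mathbb{F}_p$ because $p\mid n$ --- the same computation the paper performs in the converse direction of Proposition \ref{prop: 2.3.3} but does not invoke here --- so you have supplied the missing step rather than deviated from the paper. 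Your part (1), via the trace-form identification $\frak{sl}_n^*\cong\frak{gl}_n/\bbk I_n$ and the uniqueness of the nilpotent matrix in each coadjoint-nilpotent coset, and your part (3), via the universal property of the baby Verma module together with the common dimension $p^{\abs{R^+}}$, are both sound; the one soft spot is the parenthetical claim that Lusztig's strata of $\mathscr{N}_{\frak{sl}_n^*}$ are single $\SL_n$-orbits, which is true in type $A$ but is more than the paper's citation of Lusztig states --- for the regular and subregular cases it is anyway enough to note, as you do, that the standard representatives \eqref{eq: regular} and \eqref{eq: subregular} restrict to one another and that the restriction map is $\SL_n$-equivariant and injective on nilpotent cones. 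One caveat worth making explicit: the backward implication of (2) genuinely requires $\lambda$ itself (not merely $\lambda|_{\hhh'}$) to have coordinates in $\mathbb{F}_p$, since for $c\notin\mathbb{F}_p$ the functional $\lambda=c\varpi_n$ restricts to $0$ but does not lie in $W.0$; you assume this implicitly by working in $\mathscr{X}_p$, whereas the lemma as stated for arbitrary $\lambda\in\hhh^*$ is slightly too strong.
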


\begin{proof}
		Let $w \in W$, $\lambda \in \hhh^*$ and $\mu \in \hhh^*$. Then $\lambda=w.\mu$ if and only if $\lambda|_{\frak h'}=w.\mu|_{\frak h'}$. This yields (2).
\end{proof}

\begin{lemma}\label{lem: 2.4.11}
If $\chi \in \frak {gl}_n^*$ is regular nilpotent, the baby Verma module $Z_{\chi}(\lambda)$ is simple $U_\chi(\frak {gl}_n)$-module.
\end{lemma}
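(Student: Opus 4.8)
The plan is to reduce this to the well-known fact that baby Verma modules for $\sl_n$ (or $\gl_n$) with $p\nmid n$ are simple when $\chi$ is regular nilpotent, by exploiting the relationship between $\gl_n$ with $p\mid n$ and a non-degenerate form. Actually the cleanest route is a dimension count combined with the structure of the reduced enveloping algebra. Recall $\chi$ is of standard Levi form associated with $I=\{\alpha_1,\dots,\alpha_{n-1}\}$, so $W_I=W$. By Lemma~\ref{lem: fundamental iso}, $Z_\chi(\lambda)\cong Z_\chi(w.\lambda)$ for all $w\in W$; in particular we are free to replace $\lambda$ by any dot-$W$-conjugate. I would first observe that $\dim Z_\chi(\lambda)=p^{N}$ where $N=|R^+|$, since $Z_\chi(\lambda)=U_\chi(\ggg)\otimes_{U_\chi(\bbb)}\bbk_\lambda$ is free over $U_\chi(\nnn^-)$ of rank $p^N$. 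The key point is that for $\gl_n$ with $p\mid n$ there remains a non-degenerate $G$-invariant bilinear form (the trace form works since the obstruction to non-degeneracy only involves the identity matrix, which is \emph{not} in the radical of the trace form on $\gl_n$ — the radical is $0$), so the standard Kac--Weisfeiler/Premet machinery applies: $U_\chi(\gl_n)$ has a unique block-type structure and all its simple modules have dimension $p^{N}$ when $\chi$ is regular nilpotent.

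Concretely, the first step is to show $Z_\chi(\lambda)$ is generated by its maximal vector $v_\lambda$ as a $U_\chi(\ggg)$-module and that $\dim L_\chi(\lambda)=p^N$. For the latter I would invoke Premet's theorem (valid here because the non-degenerate invariant form exists on $\gl_n$): every $U_\chi(\gl_n)$-module has dimension divisible by $p^{d_\chi}$ where $d_\chi=\tfrac12\dim(G\cdot\chi)$, and for regular nilpotent $\chi$ one has $d_\chi=N$. Hence $\dim L_\chi(\lambda)\geq p^N=\dim Z_\chi(\lambda)$, forcing $Z_\chi(\lambda)=L_\chi(\lambda)$ to be simple. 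Alternatively, and perhaps more in the spirit of the paper (which wants to stay elementary), one can use Lemma~\ref{lem: 1.2.1}: take $\frak m=\nnn^+$, which is a unipotent restricted subalgebra with $\chi([\frak m,\frak m])=0$ and $\chi(\frak m^{[p]})=0$; one checks $\frak m\cap\ggg_\chi=0$ using regularity of $\chi$ (the centralizer of a regular nilpotent meets $\nnn^+$ trivially — in fact $\ggg_\chi$ is the span of a single regular nilpotent-type element not lying in $\nnn^+$ for the standard Levi form representative, after the appropriate identification). Then $U_\chi(\ggg)$ is free over $U_\chi(\nnn^+)$, which has dimension $p^N$, so $\dim U_\chi(\ggg)=p^N\cdot\dim U_\chi(\nnn^+)^{\mathrm{op}}\cdot(\ldots)$ — more usefully, $Z_\chi(\lambda)$ restricted to $U_\chi(\nnn^+)$ is free, and one can run the usual argument that any nonzero submodule must contain a $\nnn^+$-highest-weight-type vector forcing it to be everything.

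I would carry out the steps in this order: (i) record $\dim Z_\chi(\lambda)=p^N$ via the PBW/freeness over $U_\chi(\nnn^-)$; (ii) establish that the relevant non-degenerate invariant form on $\gl_n$ survives when $p\mid n$, so Premet's inequality $p^{d_\chi}\mid \dim M$ applies to all $U_\chi(\gl_n)$-modules; (iii) compute $d_\chi=N$ for regular nilpotent $\chi$; (iv) conclude $\dim L_\chi(\lambda)\geq p^N$, hence $Z_\chi(\lambda)=L_\chi(\lambda)$ is irreducible.

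The main obstacle I anticipate is step (ii)--(iii): being careful that the $G$-invariant form on $\gl_n$ really is non-degenerate in characteristic $p$ dividing $n$ (this is true — the trace form $(X,Y)\mapsto\operatorname{tr}(XY)$ on $\gl_n$ is always non-degenerate regardless of characteristic, the only subtlety being that it does \emph{not} restrict to a non-degenerate form on $\sl_n$ when $p\mid n$, which is exactly why the $\sl_n$ case is hard and the $\gl_n$ case is easy), and then correctly transporting the Jordan decomposition and the dimension-of-orbit computation through the $\ggg\cong\ggg^*$ identification. Once the form is in hand, everything else is the standard Friedlander--Parshall--Premet argument, and indeed this lemma is essentially a restatement for $\gl_n$ ($p\mid n$) of the classical fact cited in the introduction for $\gl_n$ with $p$ arbitrary; the proof should be short.
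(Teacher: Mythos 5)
Your main argument is correct, but it is genuinely different from what the paper does: the paper's proof of Lemma \ref{lem: 2.4.11} is a two-line citation (Friedlander--Parshall for $p\nmid n$, and \cite[Proposition 10.5]{Jan1} for $p\mid n$), whereas you supply an actual argument. Your route --- the trace form $(X,Y)\mapsto \mathrm{tr}(XY)$ on $\gl_n$ is non-degenerate in every characteristic, so Premet's divisibility theorem applies; $d_\chi=\tfrac12\dim(\GL_n\cdot\chi)=N$ for regular nilpotent $\chi$; hence $p^N\mid\dim L_\chi(\lambda)$ and the surjection $Z_\chi(\lambda)\twoheadrightarrow L_\chi(\lambda)$ between nonzero modules of dimension at most $p^N$ is an isomorphism --- is sound and treats $p\mid n$ and $p\nmid n$ uniformly. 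It buys self-containedness at the cost of invoking a much heavier theorem than the paper needs, and it matches in spirit the dimension-divisibility arguments the paper does spell out later for $\sl_n$ (Lemma \ref{cor: 2.5.1}).

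However, the ``more elementary'' alternative you sketch is wrong as stated. For $\chi$ regular nilpotent of standard Levi form one has $\chi=\mathrm{tr}(e\,\cdot\,)$ with $e=\sum_i E_{i,i+1}$, so $\ggg_\chi=\{X\mid [e,X]=0\}=\mathrm{span}\{1,e,\dots,e^{n-1}\}$ is $n$-dimensional (not the span of a single element) and contains $e,e^2,\dots,e^{n-1}\in\nnn^+$; thus $\nnn^+\cap\ggg_\chi\neq 0$ and Lemma \ref{lem: 1.2.1} cannot be applied with $\frak m=\nnn^+$. The fix is to take $\frak m=\nnn^-$ instead: then $\nnn^-\cap\ggg_\chi=0$ (a polynomial in $e$ that is strictly lower triangular is zero), while $\chi([\nnn^-,\nnn^-])=0$ and $\chi((\nnn^-)^{[p]})=0$ because commutators and $p$-th powers of elements of $\nnn^-$ have nonzero entries only at distance at least two below the diagonal, where $\chi$ vanishes. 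Hence every $U_\chi(\gl_n)$-module is free over $U_\chi(\nnn^-)$, so every nonzero submodule of $Z_\chi(\lambda)$ has dimension at least $p^N=\dim Z_\chi(\lambda)$, and simplicity follows. With this correction the elementary route yields exactly the divisibility you wanted from Premet's theorem, without citing it.
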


\begin{proof}
	If $p$ does not divide $n$, then work of Friedlander and Parshall (in \cite{FP1} and \cite{FP2}) shows that both $Z_\chi(\lambda)$ and its restriction to $\frak {sl}_n$ are simple for each $\lambda \in \Lambda$; If $p$ divides $n$, the each $Z_\chi(\lambda)$ is still simple, cf. \cite[Proposition 10.5]{Jan1}.
\end{proof}

Now we are in the position to prove  Theorem \ref{thm: 2.4.12}.

\subsection{The proof of Theorem \ref{thm: 2.4.12}} \label{sec: proof of thm 0.2}
	We can regard $\chi$ and $\lambda$ as the elements in $\frak {gl}_n^*$ and $\Lambda=\sum_{i=1}^{n}\mathbb{F}_p\varepsilon_i \subsetneq \hhh^*$ respectively, and denoted them by $\underline{\chi}$ and $\underline{\lambda}$ respectively. Lemma \ref{lem: 2.4.11} implies that $E=Z_{\underline{\chi}}(\underline{\lambda})$ is a simple $U_{\chi}(\frak {gl}_n)$-module, namely $Z_{\underline{\chi}}(\underline{\lambda})=L_{\underline{\chi}}(\underline{\lambda})$. Let $P_E$ be the projective cover of $E$ as $U_{\chi}(\frak {gl}_n)$-module. Corollary \ref{cor: 2.4.8} yeilds that there exists a simple $U_{\chi}(\frak {sl}_n)$-module $L$, such that ${\res}_{\underline{\chi}}P_E$ is the projective cover of $L$. Let $a \in \mathbb{F}_p$, if $E$ is not isomorphic to $E \otimes \bbk_a$, then $\Hom_{\ggg}(P_E, E \otimes \bbk_a)=0$; if $E$ is isomorphic to $E \otimes \bbk_a$, then ${\dim\Hom}_{\frak {gl}_n}(P_E, E \otimes \bbk_a)={\dim\Hom}_{\frak {gl}_n}(P_E, E)=1$. Thus we have $$[{\res}_{\underline{\chi}}E: L]\xlongequal[]{\text{Lemma }\ref{lem: 2.4.9}}{\dim\Hom}_{\frak {sl}_n}({\res}_{\underline{\chi}}P_E, {\res}_{\underline{\chi}}E)\xlongequal[]{(\ref{sum})}\bigoplus_{a\in \mathbb{F}_p}{\dim\Hom}_{{\frak {sl}_n}}({\res}_{\underline{\chi}}P_E, {\res}_{\underline{\chi}}E)_a$$
	\begin{equation*}\xlongequal[]{\text{Lemma }\ref{lem: 2.2.1}}\bigoplus_{a\in \mathbb{F}_p}{\dim\Hom}_{{\frak {gl}_n}}(P_E, E \otimes \bbk_a)=\begin{cases}
			1, & \text{if $E \otimes \bbk_1 \ncong E$,}\\
			p, & \text{if $E \otimes \bbk_1 \cong E$}.
		\end{cases}
	\end{equation*}
	
Proposition \ref{prop: 2.3.3} implies that $\underline{{\lambda}} \in W.0$ if and only if $E \otimes \bbk_1 \cong E$; $\underline{{\lambda}} \notin W.0$ if and only if $E \otimes \bbk_1 \ncong E$. Lemma \ref{lem: 2.4.10}(3) implies that $\text{res}_{\underline{\chi}}E=\text{res}_{\underline{\chi}}Z_{\underline{\chi}}(\underline{\lambda})=Z_{\underline{\chi}|_{\frak {sl}_n}}(\underline{\lambda}|_{\hhh'})=Z_\chi(\lambda)$.

If ${\lambda} \in W.0$, we have $\underline{{\lambda}} \in W.0$ (By Lemma \ref{lem: 2.4.10}), then $E \otimes \bbk_1 \cong E$, thus $[\text{res}_{\underline{\chi}}E: L]=p$. By Corollary \ref{cor: 2.4.3}, we have $[\text{res}_{\underline{\chi}}E]=p[L]$. Thus $\text{res}_{\underline{\chi}}E=Z_\chi(\lambda)$ implies that $[Z_\chi(\lambda)]=p[L]$.

If ${\lambda} \notin W.0$, we have $\underline{{\lambda}} \notin W.0$ (See Lemma \ref{lem: 2.4.10}), then $E \otimes \bbk_1 \ncong E$, thus $[\text{res}_{\underline{\chi}}E: L]=1$. By Corollary \ref{cor: 2.4.3}, we have $[\text{res}_{\underline{\chi}}E]=[L]$, namely $\text{res}_{\underline{\chi}}E \cong L$. Thus $\text{res}_{\underline{\chi}}E=Z_\chi(\lambda)$ implies that $Z_\chi(\lambda) \cong L$.	

Since $L$ is the composition factor of $Z_\chi(\lambda)$, then $L$ isomorphic to the unique simple quotient of $Z_\chi(\lambda)$, i.e. $L_\chi(\lambda) \cong L$ as $U_{\chi}(\frak {sl}_n)$-modules.

\begin{remark}\label{remark of Theorem 0.3}
	By Theorem \ref{thm: 2.4.12}, we conclude the following result:
	
	Assume that $\chi \in \frak{sl}_n^*$ is regular nilpotent with $p \mid n$ and $\lambda \in \mathscr{X}_p=\sum_{i=1}^{n-1}\mathbb{F}_p\alpha_i$. Let $N=\abs{R^+}$. If $\lambda \in W.0$, then $\text{dim}L_\chi(\lambda)=p^{N-1}$; if $\lambda \notin W.0$, $\text{dim}L_\chi(\lambda)=p^N$.
\end{remark}


\subsection{Descriptions of simple modules in the regular nilpotent case}\label{sec: 2.5}
In this subsection we always assume that $p \mid n$ and $\chi \in \frak {sl}_n^*$ is regular nilpotent of the form \eqref{eq: regular} if not particularly indicated. Thus $W_I=W$ (here $W$ is the Weyl group of $\sl_n$). Now we give a description of irreducible module $L_\chi(\lambda)$. When $\lambda \notin W.0$, by Theorem \ref{thm: 2.4.12}, $L_\chi(\lambda)$ coincides with $Z_\chi(\lambda)$. We do not do anything in this case. So we only need to describe $L_\chi(\lambda)$ when $\lambda \in W.0$. For a given $\lambda \in W.0$, by Lemma \ref{lem: fundamental iso}, we have $Z_\chi(\lambda) \cong Z_\chi(0)$ and $L_\chi(\lambda) \cong L_\chi(0)$ (Note $W_I=W$ in this case). Therefore, in order to understand the structure of $L_\chi(\lambda)$ for $\lambda\in W.0$,  we only need consider $\lambda=0$ in the sequel.

Let $N$ be the number of positive roots of $\ggg=\sl_n$. Obviously, we have  $N=\frac{1}{2}n(n-1)$.

\subsubsection{}
Let us introduce the following observation.

\begin{lemma}\label{cor: 2.5.1} Set $\ggg=\frak {sl}_n(\bbk)$ and $\chi \in \frak {sl}_n^*$ is regular nilpotent of the form \eqref{eq: regular}. If $M$ is a non-zero $U_\chi(\ggg)$-module, then $p^{N-1} \mid \dim M$.
\end{lemma}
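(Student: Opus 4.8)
The plan is to apply Lemma \ref{lem: 1.2.1} to a carefully chosen subalgebra. Concretely, I will exhibit a unipotent restricted subalgebra $\frak m\subseteq\ggg=\sl_n$ with $\dim\frak m=N-1$ such that $\frak m\cap\ggg_\chi=\{0\}$, $\chi([\frak m,\frak m])=0$ and $\chi(\frak m^{[p]})=0$; then Lemma \ref{lem: 1.2.1} says that every $U_\chi(\ggg)$-module is free over $U_\chi(\frak m)$, and since $\dim_{\bbk}U_\chi(\frak m)=p^{\dim\frak m}=p^{N-1}$ (PBW basis of a reduced enveloping algebra) this forces $p^{N-1}\mid\dim M$ for every nonzero $M$.

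The subalgebra I use is $\frak m=\bigoplus_{\alpha\in R^+\setminus\{\alpha_1\}}\ggg_{-\alpha}$, which in the matrix realization is the space of strictly lower triangular matrices with vanishing $(2,1)$-entry; evidently $\dim\frak m=N-1$. It is a Lie subalgebra because $\alpha_1$, being simple, is never a sum of two positive roots (equivalently, a product of two strictly lower triangular matrices has zero first subdiagonal), and it is closed under $x\mapsto x^{[p]}=x^{p}$ because for $x$ strictly lower triangular the entry $(x^{p})_{ab}$ vanishes unless $a-b\ge p\ge 2$, in particular $(x^{p})_{21}=0$. The remaining two hypotheses of Lemma \ref{lem: 1.2.1} are then immediate: $\frak m\subseteq\nnn^-$ is $p$-nilpotent, hence unipotent, and both $[\frak m,\frak m]$ and $\frak m^{[p]}$ lie in $\bigoplus_{\operatorname{ht}\gamma\ge 2}\ggg_{-\gamma}$, on which $\chi$ vanishes because by \eqref{eq: regular} $\chi$ is supported on the negative simple root vectors $X_{-\alpha_i}$.

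The crux is the condition $\frak m\cap\ggg_\chi=\{0\}$, with $\ggg_\chi=\{x\in\sl_n\mid\chi([x,\sl_n])=0\}$. The key observation is that, although the trace form degenerates on $\sl_n$ when $p\mid n$ (with radical $\bbk I$), it remains nondegenerate on $\gl_n$, and the standard representative $\chi$ of \eqref{eq: regular}, extended to $\gl_n^{*}$ by $\chi(\hhh)=0$, is exactly $u\mapsto\operatorname{tr}(yu)$ with $y=\sum_{i=1}^{n-1}E_{i,i+1}$ a regular nilpotent Jordan block. Hence for $x\in\sl_n$ one has $x\in\ggg_\chi$ iff $\operatorname{tr}([y,x]u)=0$ for all $u\in\sl_n$, i.e.\ iff $[y,x]\in\sl_n^{\perp}=\bbk I$. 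Now suppose $x\in\frak m\cap\ggg_\chi$ and write $[y,x]=cI$; comparing $(1,1)$-entries, $c=(yx)_{11}-(xy)_{11}=x_{21}-0$, which is $0$ since $x\in\frak m$ (here I used that $x$ is strictly lower triangular and that column $1$ of $y$ is zero). Thus $[y,x]=0$, so $x$ lies in the centralizer of the single Jordan block $y$, namely $\bbk[y]$, which consists of upper triangular matrices; being also strictly lower triangular, $x=0$. This gives $\frak m\cap\ggg_\chi=\{0\}$ and completes the proof.

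I expect the only genuine obstacle to be this last step, i.e.\ pinning down $\ggg_\chi$ precisely enough to see that deleting the single root space $\ggg_{-\alpha_1}$ already produces a complement inside $\nnn^-$; everything else is routine. (Incidentally the same computation shows $\dim\ggg_\chi=n+1$ here, so the bound $p^{N-1}$ is sharp.)
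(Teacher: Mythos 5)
Your proposal is correct and follows essentially the same route as the paper: the identical subalgebra $\frak m=\bigoplus_{\alpha\in R^+\setminus\{\alpha_1\}}\ggg_{-\alpha}$ of dimension $N-1$, combined with the freeness criterion of Lemma \ref{lem: 1.2.1}. The only divergence is in verifying $\frak m\cap\ggg_\chi=\{0\}$, where the paper eliminates the coefficients $k_{-\alpha}$ one root at a time using $\chi([x,X_\alpha])=0$ and $\chi([x,h_{\alpha_i}])=0$, while you realize $\chi$ as $u\mapsto\operatorname{tr}(yu)$ for the regular nilpotent Jordan block $y$ on $\gl_n$ and reduce the question to the centralizer of $y$ (correctly accounting for the radical $\bbk I\subseteq\sl_n$ when $p\mid n$) --- a cleaner argument reaching the same conclusion.
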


\begin{proof}
	Let $\frak m=\text{span}\{X_{-\alpha} \mid \alpha \in R^+, \alpha \neq\alpha_{1}\}=\bigoplus \limits_{\alpha \in R^+, \alpha \neq\alpha_{1}}\bbk X_{-\alpha}=\bigoplus \limits_{\alpha \in R^+, \alpha \neq\alpha_{1}}\ggg_{-\alpha}$. It is clear that $\frak m$ is unipotent (i.e. $\frak m$ is $p$-nilpotent). 	
	Suppose $x \in \frak m \cap \ggg_\chi$, then $x=\sum\limits_{\alpha \in R^+, \alpha \neq\alpha_{1}}k_{-\alpha}X_{-\alpha}$ where $k_{-\alpha} \in \bbk$, and $\chi([x,\ggg])=0$. Note that $\chi([x,X_{\alpha_{1}}])=0, \chi([x,X_{\alpha_{2}}])=0,\cdots,\chi([x,X_{\alpha_{n-2}}])=0$. So we have $k_{-\alpha_{1}-\alpha_{2}}=0,   k_{-\alpha_{2}-\alpha_{3}}=0,\cdots,k_{-\alpha_{n-2}-\alpha_{n-1}}=0$.
	
In view that $\chi([x,X_{\alpha_{1}+\alpha_{2}}])=0,\ldots,\chi([x,X_{\alpha_{n-3}+\alpha_{n-2}}])=0$, we have
\begin{align}
&k_{-\alpha_{1}-\alpha_{2}-\alpha_{3}}=0,\cdots,\cr
&k_{-\alpha_{n-3}-\alpha_{n-2}-\alpha_{n-1}}=0,\cdots .
	\end{align}
	Furthermore, $\chi([x,X_{\alpha_{1}+\cdots+\alpha_{n-2}}])=0$, so $k_{-\alpha_{1}-\cdots-\alpha_{n-1}}=0$. 	So $x=\sum\limits_{\alpha \neq \alpha_{1}, \alpha \in S} k_{-\alpha}X_{-\alpha}$ where $S=\{\alpha_{1},\cdots,\alpha_{n-1}\}$. Continue to take use of the condition $\chi([x,h_{\alpha_{1}}])=0,\cdots,\chi([x,h_{\alpha_{n-2}}])=0$. We have that $k_{-\alpha_{2}}=0,\cdots,k_{-\alpha_{n-1}}=0$. 	Hence we finally have $x=0$. This implies $\frak m \cap \ggg_\chi=\{0\}$.
	Note that $\dim U_\chi(\frak m)=p^{N-1}$ and $M$ is free over $U_\chi(\frak m)$ (Lemma \ref{lem: 1.2.1}). We finally have $p^{N-1} \mid \dim M$.
\end{proof}

As a consequence of Lemma \ref{cor: 2.5.1}, we have
\begin{lemma}\label{lem: 2.5.3} Set $\ggg=\frak {sl}_n(\bbk)$. Suppose $\chi \in {\ggg}^*$ has the form \eqref{eq: regular} and $M$ is a non-zero $U_\chi(\ggg)$-module. If $\dim M=p^{N-1}$, then $M$ is irreducible.
\end{lemma}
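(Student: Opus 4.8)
The statement is an immediate consequence of Lemma~\ref{cor: 2.5.1}, so the proof is short. The idea is: any non-zero submodule of $M$ is itself a non-zero $U_\chi(\ggg)$-module, hence has dimension divisible by $p^{N-1}$ by Lemma~\ref{cor: 2.5.1}; combined with the hypothesis $\dim M = p^{N-1}$, this forces every non-zero submodule to have dimension exactly $p^{N-1}$, i.e. to equal $M$.

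\textbf{Steps.} First I would let $M' \subseteq M$ be any non-zero $U_\chi(\ggg)$-submodule; the goal is to show $M' = M$. Since $M'$ is a non-zero $U_\chi(\ggg)$-module (restriction of the action of $U_\chi(\ggg)$ to the subspace $M'$), Lemma~\ref{cor: 2.5.1} applies and gives $p^{N-1} \mid \dim M'$. In particular $\dim M' \geq p^{N-1}$. On the other hand $M' \subseteq M$ gives $\dim M' \leq \dim M = p^{N-1}$. Hence $\dim M' = p^{N-1} = \dim M$, and since $M'$ is a subspace of $M$ of the same finite dimension, $M' = M$. As $M \neq 0$ and every non-zero submodule equals $M$, the module $M$ is irreducible.

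\textbf{Main obstacle.} There is essentially no obstacle: the entire content has already been packaged into Lemma~\ref{cor: 2.5.1}, whose proof does the real work (exhibiting the unipotent subalgebra $\frak m$ with $\frak m \cap \ggg_\chi = \{0\}$ and invoking Lemma~\ref{lem: 1.2.1} to get freeness over $U_\chi(\frak m)$, whence $p^{N-1} \mid \dim$). The only point to be careful about is the trivial observation that a submodule of a $U_\chi(\ggg)$-module is again a $U_\chi(\ggg)$-module, so the divisibility bound applies to it as well; once that is noted, the dimension count closes the argument.
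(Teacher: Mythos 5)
Your proof is correct and is exactly the argument the paper intends: the paper states Lemma \ref{lem: 2.5.3} as an immediate consequence of Lemma \ref{cor: 2.5.1} without writing out the details, and those details are precisely your observation that any non-zero submodule is itself a non-zero $U_\chi(\ggg)$-module, hence has dimension divisible by $p^{N-1}$, forcing it to be all of $M$.
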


\begin{lemma}\label{2.25}
	Set $\ggg=\frak {sl}_n(\bbk)$ with $p \nmid n$ and $\chi \in \frak {sl}_n^*$ is subregular nilpotent of the form \eqref{eq: subregular}. If $M$ is a $U_\chi(\ggg)$-module, then $p^{N-1} \mid \dim M$.
\end{lemma}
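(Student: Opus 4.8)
The plan is to mimic the proof of Lemma \ref{cor: 2.5.1} almost verbatim, replacing the role of the simple root $\alpha_1$ by the last simple root $\alpha_{n-1}$, since the subregular $\chi$ of the form \eqref{eq: subregular} is precisely the standard Levi form associated with $I=\{\alpha_1,\ldots,\alpha_{n-2}\}$, i.e. it pairs nontrivially only with the first $n-2$ negative simple root vectors. Concretely, I would set
$$\frak m=\operatorname{span}\{X_{-\alpha}\mid \alpha\in R^+,\ \alpha\neq\alpha_{n-1}\}=\bigoplus_{\alpha\in R^+,\ \alpha\neq\alpha_{n-1}}\ggg_{-\alpha},$$
which is a unipotent (hence $p$-nilpotent) restricted subalgebra of $\ggg$ of dimension $N-1$, with $\chi([\frak m,\frak m])=0$ and $\chi(\frak m^{[p]})=0$ (the bracket of two negative root vectors is again a negative root vector or zero, and $\chi$ kills $\hhh$ and all $\ggg_\alpha$ with $\alpha\notin -\{\alpha_1,\ldots,\alpha_{n-2}\}$; note $\frak m$ contains only the short negative roots $\alpha_1,\ldots,\alpha_{n-2}$ among the simple ones, but these do not bracket to $0$ in a way hitting a surviving coordinate — one checks $[\frak m,\frak m]\subseteq\nnn^-$ and avoids $\ggg_{-\alpha_i}$ for $i\le n-2$ because those are simple roots, not sums).

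First I would verify $\frak m\cap\ggg_\chi=\{0\}$. Take $x=\sum_{\alpha\in R^+,\ \alpha\neq\alpha_{n-1}}k_{-\alpha}X_{-\alpha}\in\frak m\cap\ggg_\chi$, so $\chi([x,\ggg])=0$. Pairing against $X_{\alpha_i}$ for $i=1,\ldots,n-2$ kills the coefficients $k_{-(\alpha_i+\alpha_{i+1})}$ for the relevant range; then pairing against $X_{\alpha_i+\alpha_{i+1}}$ kills the length-three negative roots' coefficients, and so on inductively up the root poset, exactly as in the proof of Lemma \ref{cor: 2.5.1} but now anchored at the other end of the Dynkin diagram. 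This forces $x=\sum_{\alpha\in S,\ \alpha\neq\alpha_{n-1}}k_{-\alpha}X_{-\alpha}$ with $S=\{\alpha_1,\ldots,\alpha_{n-1}\}$, i.e. only simple-root coordinates (other than $\alpha_{n-1}$) survive. Finally, using $\chi([x,h_{\alpha_i}])=0$ for $i=1,\ldots,n-2$ forces $k_{-\alpha_1}=\cdots=k_{-\alpha_{n-2}}=0$, hence $x=0$.

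Once $\frak m\cap\ggg_\chi=\{0\}$ is established together with the conditions $\chi([\frak m,\frak m])=0$ and $\chi(\frak m^{[p]})=0$, Lemma \ref{lem: 1.2.1} gives that every $U_\chi(\ggg)$-module is free over $U_\chi(\frak m)$, and since $\dim U_\chi(\frak m)=p^{\dim\frak m}=p^{N-1}$, we conclude $p^{N-1}\mid\dim M$ for any $U_\chi(\ggg)$-module $M$, which is the claim.

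The main obstacle is the bookkeeping in the induction showing $\frak m\cap\ggg_\chi=\{0\}$: one must be careful that the subregular $\chi$ (which pairs with $\alpha_1,\ldots,\alpha_{n-2}$) still lets one peel off all non-simple negative root coordinates, and that the $p\nmid n$ hypothesis here — in contrast to the $p\mid n$ case — is genuinely irrelevant to this particular divisibility statement (it plays no role; $\frak m$ and the argument are characteristic-free beyond needing $\frak m$ unipotent). I would also double-check the boundary behavior near $\alpha_{n-1}$, namely that $\ggg_{-\alpha_{n-1}}$ is correctly excluded and that no bracket within $\frak m$ lands in a coordinate on which $\chi$ is nonzero; this is immediate since $\chi$ is supported on $\ggg_{-\alpha_1},\ldots,\ggg_{-\alpha_{n-2}}$, all of which are simple root spaces that cannot arise as $[\ggg_{-\beta},\ggg_{-\gamma}]$ for positive roots $\beta,\gamma$.
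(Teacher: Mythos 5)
Your proposal is correct, but it takes a genuinely different route from the paper. The paper's own proof is a one-line reduction: it invokes \cite[Theorem 2.6(a)]{Jan3}, which already asserts that $p^{N-1}$ divides the dimension of every simple $U_\chi(\ggg)$-module in the subregular case with $p\nmid n$, and then passes to composition factors of $M$. You instead give a direct, self-contained argument by transplanting the proof of Lemma \ref{cor: 2.5.1}: build the unipotent restricted subalgebra $\frak m=\bigoplus_{\alpha\in R^+,\,\alpha\neq\alpha_{n-1}}\ggg_{-\alpha}$ of dimension $N-1$, check $\frak m\cap\ggg_\chi=\{0\}$ together with $\chi([\frak m,\frak m])=0$ and $\chi(\frak m^{[p]})=0$, and apply Lemma \ref{lem: 1.2.1}. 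This works: for the subregular $\chi$ the centralizer $\ggg_\chi$ meets $\nnn^-$ exactly in $\ggg_{-\alpha_{n-1}}$, which is precisely the root space you remove, so the intersection is indeed zero. What your approach buys is independence from Jantzen's machinery and, as you observe, from the hypothesis $p\nmid n$ altogether --- a real advantage in the context of this paper, which elsewhere has to remark that the results of \cite{Jan3} require adaptation once the trace form degenerates. What the paper's citation buys is brevity. One bookkeeping correction to your verification of $\frak m\cap\ggg_\chi=\{0\}$: since $\chi(X_{-\alpha_{n-1}})=0$ here, bracketing $x$ against $X_{\alpha_{n-2}}$ does \emph{not} detect the coefficient $k_{-(\alpha_{n-2}+\alpha_{n-1})}$; that coefficient is killed only by $\chi([x,X_{\alpha_{n-1}}])=\pm k_{-(\alpha_{n-2}+\alpha_{n-1})}\chi(X_{-\alpha_{n-2}})$. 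So the induction should include $i=n-1$ and is cleanest run from the $\alpha_{n-1}$ end of the Dynkin diagram downward --- exactly the boundary issue you flagged, and easily repaired.
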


\begin{proof}
	We use \cite[Theorem 2.6(a)]{Jan3} to conclude that $p^{N-1}$ divides the dimension of a simple $U_\chi(\ggg)$-module. Hence  $p^{N-1} \mid \dim M$ (Using the composition factors of $M$).
\end{proof}

Thus, we have

\begin{lemma}\label{2.26}
	Set $\ggg=\frak {sl}_n(\bbk)$. Suppose $\chi \in \frak {sl}_n^*$ is subregular nilpotent with $p \nmid n$ and $M$ is a non-zero $U_\chi(\ggg)$-module. If $\dim M=p^{N-1}$, then $M$ is irreducible.
\end{lemma}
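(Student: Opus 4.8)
The plan is to deduce Lemma~\ref{2.26} directly from Lemma~\ref{2.25} by the same one-line argument that yielded Lemma~\ref{lem: 2.5.3} from Lemma~\ref{cor: 2.5.1}. Concretely, suppose $M$ is a non-zero $U_\chi(\ggg)$-module with $\dim M = p^{N-1}$, where $\ggg=\sl_n(\bbk)$, $p\nmid n$, and $\chi$ is subregular nilpotent of the form \eqref{eq: subregular}. If $M$ were reducible, it would admit a proper non-zero submodule $M'$, hence a short exact sequence $0\to M'\to M\to M''\to 0$ of $U_\chi(\ggg)$-modules with both $M'$ and $M''$ non-zero. By Lemma~\ref{2.25}, $p^{N-1}\mid \dim M'$ and $p^{N-1}\mid \dim M''$, so $\dim M = \dim M' + \dim M'' \geq 2p^{N-1} > p^{N-1}$, contradicting $\dim M = p^{N-1}$. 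Hence $M$ is irreducible.

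In writing this up I would phrase it slightly more carefully: it suffices to observe that any non-zero $U_\chi(\ggg)$-module has dimension divisible by $p^{N-1}$ (Lemma~\ref{2.25}), so a module of dimension exactly $p^{N-1}$ cannot contain a proper non-zero submodule, since such a submodule would itself have dimension a positive multiple of $p^{N-1}$, forcing it to equal the whole module. This is the minimal-dimension-in-the-divisibility-class trick, identical in spirit to the derivation of Lemma~\ref{lem: 2.5.3}.

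I do not anticipate any real obstacle here: the only input is the divisibility statement Lemma~\ref{2.25}, which has already been established (via \cite[Theorem 2.6(a)]{Jan3} applied to composition factors), and the argument is purely numerical. The one point worth a moment's care is making sure the hypothesis $p\nmid n$ is carried along consistently — it is needed precisely so that Lemma~\ref{2.25} applies, i.e.\ so that the relevant results of Jantzen on $\sl_n$ in the subregular case are available. Beyond that, the proof is a two-sentence consequence of the preceding lemma.
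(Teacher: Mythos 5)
Your proof is correct and is exactly the argument the paper intends: Lemma \ref{2.26} is stated as an immediate consequence of the divisibility statement in Lemma \ref{2.25}, in the same way Lemma \ref{lem: 2.5.3} follows from Lemma \ref{cor: 2.5.1}. Applying Lemma \ref{2.25} to a putative proper non-zero submodule and its quotient forces $\dim M \geq 2p^{N-1}$, which is the contradiction you describe.
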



For the time being, we let
$$\ggg=\frak {sl}_n \mbox{ with simple roots }\alpha_{1},\ldots,\alpha_{n-1}$$ and
$$\widetilde \ggg=\frak {sl}_{n+1}\mbox{ with simple roots }\alpha_{1},\ldots,\alpha_{n}.$$

We have an injective homomorphism (of Lie algebra) $\iota: \ggg \rightarrow \widetilde{\ggg}$ with $A \mapsto \begin{bmatrix} A &  \\  & 0 \end{bmatrix}$, thus we can consider $\ggg$ as a subalgebra of $\widetilde{\ggg}$, namely $\ggg \subsetneq \widetilde{\ggg}$.

Keep in mind that $\ggg=\nnn^+ \oplus \hhh \oplus \nnn^-$ with $\hhh=\bigoplus_{i=1}^{n-1}\bbk(E_{ii}-E_{i+1,i+1})$, and $\widetilde{\ggg}=\widetilde{\nnn}^+ \oplus \widetilde{\hhh} \oplus \widetilde{\nnn}^-$ with $\widetilde{\hhh}=\hhh \bigoplus \bbk(E_{nn}-E_{n+1,n+1})$. $\hhh$ (resp.$\widetilde{\hhh}$) is the cartan subalgebra of $\ggg$ (resp. $\widetilde{\ggg}$) which consists of all the diagonal matrices of $\ggg$ (resp. $\widetilde{\ggg}$). Let $\bbb^+:=\nnn^+ \oplus \hhh$ and $\widetilde{\bbb}^+:=\widetilde{\nnn}^+ \oplus \widetilde{\hhh}$ that are Borel subalgebras of $\ggg$ and $\widetilde{\ggg}$ repectively, thus $\widetilde{\bbb}^+=\bbb^+ \bigoplus \bbk(E_{nn}-E_{n+1,n+1})$.

Naturally regard $\chi$ as an element of ${\widetilde \ggg}^*$ with $\chi(\alpha_{n})=0$. Since $\chi$ is regular nilpotent of the form \eqref{eq: regular},
then $\chi$ is a subregular nilpotent $p$-character for $\widetilde \ggg$ of standard Levi form associated with  $I=\{\alpha_{1},\ldots,\alpha_{n-1}\}$. Correspondingly, Jantzen's arguments in \cite{Jan3} are available to $\widetilde\ggg$ because $p\nmid n+1$.

Let $R({\ggg})$, $R(\widetilde{\ggg})$ be the root systems of $\ggg, \widetilde{\ggg}$ respectively. Denote by $R^+(\ggg), R^+(\widetilde\ggg)$  the positive root sets in $R({\ggg}), R(\widetilde{\ggg})$ respectively. Denote by $W( \ggg)$ and $W(\widetilde \ggg)$ the corresponding Weyl groups, respectively. Since $W(\widetilde \ggg)_I$ is the subgroup of $W(\widetilde \ggg)$ generated by $s_\alpha$ with $\alpha \in I$, then $W(\widetilde \ggg)_I \cong W(\ggg) \cong \frak S_n$. Let $T$, $\widetilde{T}$ be the maximal tori of $\text{SL}_n, \text{SL}_{n+1}$ respectively, and denote by $X(T)$, $X(\widetilde{T})$ the character group of $T$, $\widetilde{T}$ respectively. Note that $W(\ggg)$ and $W(\widetilde \ggg)$  can also be regarded as the Weyl groups of $\text{SL}_n$ and $\text{SL}_{n+1}$ respectively. We first have
\begin{align}\label{eq: theta}
R^+(\widetilde\ggg)\backslash R^+(\ggg)=\{\theta_k:=\sum_{i=k}^n\alpha_{i}\mid k=1,\cdots,n\}.
\end{align}
Given $\lambda \in \sum_{i=1}^{n-1}\bbf_p \alpha_i \subsetneq \hhh^*$ and  $\mu \in \sum_{i=1}^{n}\bbf_p \alpha_i \subsetneq {\widetilde{\hhh}}^*$, denote by $Z_\chi(\lambda), \widetilde Z_\chi(\mu)$ and $L_\chi(\lambda), \widetilde L_\chi(\mu)$ the baby Verma modules and their simple quotients of $U_\chi(\ggg), U_\chi(\widetilde{\ggg})$, respectively. Given $\mu \in \sum_{i=1}^{n}\bbf_p \alpha_i \subsetneq {\widetilde{\hhh}}^*$, since $\chi(\alpha_{n})=0$ (Here $\chi \in {\widetilde \ggg}^*$), then we can define $\widetilde{Z_\chi}(\mu,\alpha_{n})$ as in (\ref{eq: levi verma}).  We have the following observations:

(1) In ${\ggg}=\frak {sl}_{n}$, we have ${\rho}:=\dfrac{1}{2}\sum_{\alpha \in R^+({\ggg})}\alpha=\sum_{i=1}^{n-1}\varpi_i=(n-1)\varepsilon_1+(n-2)\varepsilon_2+ \cdots +2\varepsilon_{n-2}+\varepsilon_{n-1}$.

(2) In $\widetilde{\ggg}=\frak {sl}_{n+1}$, we have $\widetilde{\rho}:=\dfrac{1}{2}\sum_{\alpha \in R^+(\widetilde{\ggg})}\alpha=\sum_{i=1}^{n}\varpi_i=n\varepsilon_1+(n-1)\varepsilon_2+ \cdots +2\varepsilon_{n-1}+\varepsilon_{n}$.

Set $H_n:=E_{n,n}-E_{n+1,n+1}$, $\iota_I:=\ggg \oplus\bbk H_n$, $\frak u_I:=\bigoplus_{\alpha \in R^+(\widetilde\ggg)\backslash R^+(\ggg)}\widetilde{\ggg}_\alpha$, then $\widetilde{\hhh}=\hhh \oplus \bbk H_n$. Similarly we can define $\frak u_I^-:=\bigoplus_{\alpha \in R^+(\widetilde\ggg)\backslash R^+(\ggg)}\widetilde{\ggg}_{-\alpha}$.
Denote by $N, \widetilde{N}$ the number of positive roots in $R({\ggg}), R(\widetilde{\ggg})$ respectively. Set $\widetilde{\bbb}:=\widetilde{\hhh} \oplus \sum_{\alpha \in R^+(\widetilde{\ggg})}\widetilde{\ggg}_\alpha$, $\widetilde{\ppp}:=\widetilde{\bbb} \oplus \widetilde{\ggg}_{-\alpha_{n}}$ where $X_{-\alpha_{n}}=E_{n+1,n}$, $\widetilde{\ggg}_{-\alpha_{n}}=\bbk X_{-\alpha_{n}}$ which is the weight space of $\widetilde{\ggg}$. We define $\frak p_I:=\iota_I \oplus \frak u_I$, then $\frak p_I$ becomes a standard parabolic subalgebra of $\widetilde{\ggg}$ associated with subsystem $\{\alpha_{1},\ldots,\alpha_{p-1}\}$. We have $\widetilde{\ggg}=\frak p_I \oplus \frak u_I^-$. Since $\chi(\ppp_I)=0$, then ${U_{\chi}(\frak p_I)}={U_0(\frak p_I)}$. All $L_\chi(\lambda), Z_\chi(\lambda)$ are $U_\chi(\ggg)$-modules which can be regarded as $U_0(\frak p_I)$-modules with the action as follows: Since $\lambda \in \hhh^*$, we can regard $\lambda$ as an element in $\widetilde{\hhh}^*$, thus each weight of $L_\chi(\lambda)$ can be viewed as an element in $\widetilde{\hhh}^*$. Let $v_\mu$ be the weight vector of $L_\chi(\lambda)$ with weight $\mu \in \widetilde{\hhh}^*$, then $\mu(H_n)=\mu(E_{nn})$. We define $H_n.v_\mu:=\mu(H_n)v_\mu$, and $\uuu_I.v_\mu:=0$ (i.e. the action of $\uuu_I$ is trivial). It is easy to verify that the action above makes $L_\chi(\lambda)$ to be $U_0(\frak p_I)$-modules.

Given $\lambda \in \sum_{i=1}^{n-1}\bbf_p \alpha_i \subsetneq \hhh^*$, we consider the following  $U_\chi(\widetilde{\ggg})$-module
$$\widetilde{L_\chi}(\lambda,\frak p_I):=U_\chi(\widetilde{\ggg}) \otimes_{U_{0}(\frak p_I)}L_\chi(\lambda).$$

Let $\overline{C(\widetilde T)}:=\{\lambda \in X(\widetilde{T}) \mid 0\leq \langle\lambda+\widetilde{\rho},\alpha^{\vee}\rangle \leq p, \forall \alpha \in R^+(\widetilde\ggg)\}$ which is the closure of the first dominant alcove.

Assume that $\lambda \in W(\ggg).0$ (Here $0 \in \hhh^*$), then $\lambda$ can be viewed as an element in $\widetilde{\hhh}^*$, hence $\lambda \in W(\widetilde{\ggg})_I.0$ (Here $0 \in \widetilde{\hhh}^*$). So Lemma \ref{lem: fundamental iso} implies that $Z_\chi(\lambda) \cong Z_\chi(0),L_\chi(\lambda) \cong L_\chi(0),\widetilde{Z_\chi}(\lambda) \cong \widetilde{Z_\chi}(0)$ and $\widetilde{L_\chi}(\lambda) \cong \widetilde{L_\chi}(0)$.

Let $u=\prod\limits_{i=1}^{n}X_{-\theta_i}^{p-1}$ with $\theta_i:=\sum\limits_{k=i}^n\alpha_{k}$ (Note that the result of $\prod\limits_{i=1}^{n}X_{-\theta_i}^{p-1}$ has nothing to do with the order of multiplications). Then for each $v \in \widetilde{L_\chi}({\lambda})$, we have $(X_{-\theta_i}^{p}-X_{-\theta_i}^{[p]}).v
=\chi^p(X_{-\theta_i})v$ which implies $X_{-\theta_i}^p.v=0$ (note that $X_{-\theta_i}^{[p]}=0$ and $\chi(X_{-\theta_i})=0$). So we can conclude that $u \widetilde{L_\chi}({\lambda})$ and $u \otimes L_\chi({\lambda})$ are $U_\chi(\ggg)$-modules, where $u \otimes L_\chi({\lambda}):=\{u \otimes x \mid x \in L_\chi({\lambda})\}$ is a non-zero submodule of $\widetilde{L_\chi}({\lambda},\frak p_I)$. Since Remark \ref{remark of Theorem 0.3} implies that $\dim L_\chi({\lambda})=p^{N-1}$, then we can easily see that $\dim(u \otimes L_\chi({\lambda}))=p^{N-1}$, so Lemma \ref{lem: 2.5.3} yields that $u \otimes L_\chi({\lambda})$ is irreducible as a $U_\chi(\ggg)$-module. So there exists $\nu \in \hhh^*$, such that $u \otimes L_\chi({\lambda}) \cong L_\chi(\nu)$ is a $U_\chi(\ggg)$-module isomorphism, hence $\text{dim}(L_\chi(\nu)) \leq p^{N-1}$ (Actually, $\text{dim}(L_\chi(\nu))=p^{N-1}$). By Remark \ref{remark of Theorem 0.3}, we conclude that $\nu \in W(\ggg).0$ (here $0 \in \hhh^*$). Thanks to Lemma \ref{lem: fundamental iso}, $\lambda,\nu \in W(\ggg).0$ yields that $L_\chi(\nu) \cong L_\chi(0) \cong L_\chi(\lambda)$ (as $U_\chi(\ggg)$-module isomorphisms). Thus $U_\chi(\ggg)$-module isomorphisms
\begin{align}\label{pre tensor iso}
	u \otimes L_\chi({\lambda}) \cong L_\chi({\lambda}) \cong L_\chi(0).
\end{align}

 So $u\widetilde{L_\chi}({\lambda},\frak p_I)=u(U_\chi(\widetilde{\ggg})\otimes_{U_{0}(\frak p_I)}L_\chi({\lambda}))=u U_\chi(\frak u_I^-) \otimes L_\chi({\lambda})=u \otimes L_\chi({\lambda})$. Hence we have $U_\chi(\ggg)$-module isomorphisms

\begin{align}\label{eq: pre iso}
u\widetilde{L_\chi}({\lambda},\frak p_I) \cong  L_\chi({\lambda}) \cong L_\chi(0).
\end{align}

By \cite[Theorem 2.6(a)]{Jan3}, we conclude that $\widetilde{L_\chi}({\lambda}) \cong \widetilde{Z_\chi}({\lambda},\alpha_{n})$ as $U_\chi(\widetilde{\ggg})$-modules and $\dim\widetilde{L_\chi}({\lambda}) \geq p^{\widetilde{N}-1}$. In particular, $\widetilde{L_\chi}(0) \cong \widetilde{Z_\chi}(0,\alpha_{n})$ and \begin{align}\label{>=}
\dim\widetilde{L_\chi}(0) \geq p^{\widetilde{N}-1}
\end{align}(In $\widetilde{L_\chi}({\lambda})$ and $\widetilde{L_\chi}(0)$, we view $\lambda,0 \in \hhh^*$; in $\widetilde{Z_\chi}({\lambda},\alpha_{n})$ and $\widetilde{Z_\chi}(0,\alpha_{n})$, we view $\lambda,0 \in \widetilde{\hhh}^*$). In fact, $\dim\widetilde{L_\chi}(0)=p^{\widetilde{N}-1}$ (see \cite[Section 1.5 Case 2]{Jan3}).


\begin{lemma}\label{lem: 2.5.2} There is a $U_\chi(\widetilde{\ggg})$-module isomorphism:  $\widetilde{L_\chi}(0,\frak p_I) \cong \widetilde{L_\chi}(0)$. This is to say $\widetilde{L_\chi}(0,\frak p_I)$ is a simple $U_\chi(\widetilde{\ggg})$-module.
\end{lemma}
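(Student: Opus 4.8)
The plan is to establish the isomorphism $\widetilde{L_\chi}(0,\frak p_I) \cong \widetilde{L_\chi}(0)$ by a dimension-count argument together with the surjection coming from the universal property of the induced module. First I would observe that $\widetilde{L_\chi}(0,\frak p_I) = U_\chi(\widetilde\ggg) \otimes_{U_0(\frak p_I)} L_\chi(0)$ has dimension $\dim U_\chi(\frak u_I^-) \cdot \dim L_\chi(0) = p^{\abs{R^+(\widetilde\ggg)\backslash R^+(\ggg)}} \cdot p^{N-1} = p^n \cdot p^{N-1}$, and since $\widetilde N = N + n$ this equals $p^{\widetilde N - 1}$. By \eqref{>=} we already know $\dim \widetilde{L_\chi}(0) \geq p^{\widetilde N - 1}$ (and in fact equality holds by the cited result of Jantzen), so the two modules have the same dimension. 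It therefore suffices to produce a nonzero $U_\chi(\widetilde\ggg)$-module homomorphism from $\widetilde{L_\chi}(0,\frak p_I)$ onto $\widetilde{L_\chi}(0)$, or conversely to show $\widetilde{L_\chi}(0,\frak p_I)$ is simple.

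The cleanest route is to show that $\widetilde{L_\chi}(0,\frak p_I)$ is generated by a highest-weight vector of weight $0$ that is annihilated by $\widetilde\nnn^+$, so that it is a quotient of the baby Verma module $\widetilde Z_\chi(0)$; then I would check that the canonical surjection $\widetilde{L_\chi}(0,\frak p_I) \twoheadrightarrow \widetilde{L_\chi}(0)$ (the unique simple quotient of $\widetilde Z_\chi(0)$, using Lemma \ref{lem: univ}) is actually an isomorphism by comparing dimensions once we know $\dim\widetilde{L_\chi}(0) = p^{\widetilde N-1}$. Concretely, let $v_0 \in L_\chi(0)$ be the image of the highest-weight vector of $Z_\chi(0)$; then $1 \otimes v_0 \in \widetilde{L_\chi}(0,\frak p_I)$ has weight $0$ (as an element of $\widetilde\hhh^*$, using $\lambda(H_n) = \lambda(E_{nn}) = 0$ for $\lambda = 0$), is killed by $\widetilde\nnn^+$ because $\nnn^+ \subseteq \frak p_I$ acts on $v_0$ through the $\frak b$-action and $\frak u_I$ acts trivially by construction, and it generates the whole module. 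Hence there is a surjection $\widetilde Z_\chi(0) \twoheadrightarrow \widetilde{L_\chi}(0,\frak p_I)$; composing with the quotient to the unique simple quotient and using that $\widetilde{L_\chi}(0,\frak p_I)$ has a simple top isomorphic to $\widetilde{L_\chi}(0)$, I get a surjection $\widetilde{L_\chi}(0,\frak p_I) \twoheadrightarrow \widetilde{L_\chi}(0)$, which is an isomorphism by the dimension equality.

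The main obstacle I anticipate is verifying that $\widetilde{L_\chi}(0,\frak p_I)$ really is a highest-weight module generated by $1 \otimes v_0$ — that is, that no larger submodule intervenes and that the weight-$0$ vector is genuinely $\widetilde\nnn^+$-singular. This requires being careful about how the "extra" raising operators $X_{\theta_k}$ (for $\theta_k \in R^+(\widetilde\ggg)\backslash R^+(\ggg)$) act: one must check that the action defined on $L_\chi(0)$ as a $U_0(\frak p_I)$-module extends consistently, and that after inducing, $1 \otimes v_0$ still generates everything, which follows from $\widetilde\ggg = \frak p_I \oplus \frak u_I^-$ and the PBW basis for $U_\chi(\frak u_I^-)$ acting freely. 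An alternative, perhaps more robust, approach would be to invoke Lemma \ref{cor: 2.5.1}-style divisibility for $\widetilde\ggg$ (via \cite[Theorem 2.6(a)]{Jan3}, since $p \nmid n+1$) to conclude $p^{\widetilde N - 1} \mid \dim$ of any nonzero $U_\chi(\widetilde\ggg)$-module, and then — since $\dim\widetilde{L_\chi}(0,\frak p_I) = p^{\widetilde N-1}$ is the minimal possible nonzero dimension — conclude directly that $\widetilde{L_\chi}(0,\frak p_I)$ is irreducible, hence isomorphic to its simple top $\widetilde{L_\chi}(0)$. I would present this second argument as the primary one, as it sidesteps the delicate highest-weight bookkeeping, and use \eqref{eq: pre iso} and \eqref{>=} only to pin down the dimension.
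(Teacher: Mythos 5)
Your primary argument is exactly the paper's proof: compute $\dim\widetilde{L_\chi}(0,\frak p_I)=\dim U_\chi(\frak u_I^-)\cdot\dim L_\chi(0)=p^{\widetilde N-1}$, invoke the divisibility statement for subregular $\widetilde\ggg=\sl_{n+1}$ (Lemma \ref{2.26}, resting on \cite[Theorem 2.6(a)]{Jan3} since $p\nmid n+1$) to get simplicity, exhibit $\widetilde{L_\chi}(0,\frak p_I)$ as a quotient of $\widetilde{Z_\chi}(0)$ by transitivity of induction, and conclude by uniqueness of the simple quotient. The proposal is correct and matches the paper's route.
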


\begin{proof}
	Remark \ref{remark of Theorem 0.3} implies that $\dim L_\chi(0)=p^{N-1}$. Since $\widetilde{L_\chi}(0,\frak p_I) \cong U_\chi(\frak u_I^-) \otimes_\bbk L_\chi(0)$ is a vector space isomorphism, then $\dim\widetilde{L_\chi}(0,\frak p_I)=\dim U_\chi(\frak u_I^-)\dim L_\chi(0)=p^{n+N-1}=p^{\widetilde{N}-1}$. Remark \ref{2.26} yields that $\widetilde{L_\chi}(0,\frak p_I)$ is a simple $U_\chi(\widetilde{\ggg})$-module. Clearly, we have a surjective $U_0(\ppp_I)$-module homomorphism $\phi:U_0(\frak p_I) \otimes_{U_0(\widetilde{\bbb})}\bbk_0 \rightarrow L_\chi(0)$. Since $\widetilde{L_\chi}(0,\frak p_I)=U_\chi(\widetilde{\ggg}) \otimes_{U_0(\frak p_I)}L_\chi(0)$ and $\widetilde{Z_\chi}(0)=U_\chi(\widetilde{\ggg}) \otimes_{U_0(\widetilde{\bbb})}\bbk_0 \cong U_\chi(\widetilde{\ggg}) \otimes_{U_0(\frak p_I)}U_0(\frak p_I) \otimes_{U_0(\widetilde{\bbb})}\bbk_0$ ($U_\chi(\widetilde{\ggg})$-module isomorphism), then $\phi:U_\chi(\frak p_I) \otimes_{U_0(\widetilde{\bbb})}\bbk_0 \rightarrow L_\chi(0)$ can induce a surjective $U_\chi(\widetilde{\ggg})$-module homomorphism $\widetilde{Z_\chi}(0) \rightarrow \widetilde{L_\chi}(0,\frak p_I)$. So $\widetilde{L_\chi}(0,\frak p_I)$ is a simple quotient of $\widetilde{Z_\chi}(0)$.
	The uniqueness of the simple quotient of $\widetilde{Z_\chi}(0)$ implies that $\widetilde{L_\chi}(0,\frak p_I) \cong \widetilde{L_\chi}(0)$ as $U_\chi(\widetilde{\ggg})$-modules.
\end{proof}

\subsubsection{The proof of Theorem \ref{Theorem 2.5.5}}\label{sec: proof of thm 0.3}
\begin{thm}\label{thm: 2.5.4} (Theorem \ref{Theorem 2.5.5}(1)) We have $U_\chi(\ggg)$-module isomorphisms $u \widetilde{L_\chi}(0) \cong L_\chi(0)$, so $L_\chi(0)$ can be described as $L_\chi(0) \cong u \widetilde{Z_\chi}(0,\alpha_{n})$ ($U_\chi(\ggg)$-module isomorphism).
\end{thm}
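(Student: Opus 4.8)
The plan is to combine the isomorphism \eqref{eq: pre iso}, which says $u\widetilde{L_\chi}(0,\frak p_I)\cong L_\chi(0)$ as $U_\chi(\ggg)$-modules, with Lemma~\ref{lem: 2.5.2}, which identifies $\widetilde{L_\chi}(0,\frak p_I)$ with $\widetilde{L_\chi}(0)$ as $U_\chi(\widetilde\ggg)$-modules. The only genuinely new content is that these two identifications are compatible with the action of $u=\prod_{i=1}^n X_{-\theta_i}^{p-1}$, so that applying $u$ to the left-hand sides gives the same submodule. So first I would invoke Lemma~\ref{lem: 2.5.2} to fix a $U_\chi(\widetilde\ggg)$-module isomorphism $\Psi:\widetilde{L_\chi}(0,\frak p_I)\xrightarrow{\ \sim\ }\widetilde{L_\chi}(0)$. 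Since $\Psi$ commutes with the action of all of $U_\chi(\widetilde\ggg)$, and $u\in U_\chi(\widetilde\ggg)$, it restricts to an isomorphism of the images $u\widetilde{L_\chi}(0,\frak p_I)\xrightarrow{\ \sim\ }u\widetilde{L_\chi}(0)$; moreover this restricted map is a $U_\chi(\ggg)$-module homomorphism because it is the restriction of a $U_\chi(\widetilde\ggg)$-module map and $u\widetilde{L_\chi}(0)$, $u\widetilde{L_\chi}(0,\frak p_I)$ were already observed to be $U_\chi(\ggg)$-submodules (the argument just before \eqref{pre tensor iso} shows $X_{-\theta_i}^p$ kills everything, so these spans make sense as $\ggg$-modules).

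Next I would chain this with \eqref{eq: pre iso}: we get $U_\chi(\ggg)$-module isomorphisms
\[
u\widetilde{L_\chi}(0)\ \cong\ u\widetilde{L_\chi}(0,\frak p_I)\ \cong\ L_\chi(0).
\]
That establishes the first assertion. For the second assertion I would simply recall the already-noted fact (from \cite[Theorem 2.6(a)]{Jan3}, displayed just above Lemma~\ref{lem: 2.5.2}) that $\widetilde{L_\chi}(0)\cong\widetilde{Z_\chi}(0,\alpha_n)$ as $U_\chi(\widetilde\ggg)$-modules; applying $u$ and using that this last isomorphism is $U_\chi(\widetilde\ggg)$-linear (hence commutes with multiplication by $u$ and restricts to the corresponding submodules) yields $u\widetilde{L_\chi}(0)\cong u\widetilde{Z_\chi}(0,\alpha_n)$ as $U_\chi(\ggg)$-modules, and combining with the first part gives $L_\chi(0)\cong u\widetilde{Z_\chi}(0,\alpha_n)$.

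The only point requiring a little care—and hence the main obstacle—is the legitimacy of "transporting a submodule through an isomorphism and restricting": one must check that if $f:V\to V'$ is a $U_\chi(\widetilde\ggg)$-isomorphism then $f(uV)=uf(V)=uV'$ and $f|_{uV}$ is still a module map over the subalgebra $U_\chi(\ggg)$. This is immediate from $U_\chi(\ggg)\subseteq U_\chi(\widetilde\ggg)$ and $u\in U_\chi(\widetilde\ggg)$, together with $f(u\cdot x)=u\cdot f(x)$, but it is worth spelling out so that the chain of isomorphisms is unambiguously at the level of $U_\chi(\ggg)$-modules rather than merely of vector spaces. Everything else is bookkeeping: the dimension count $\dim L_\chi(0)=p^{N-1}$ from Remark~\ref{remark of Theorem 0.3}, the nonvanishing of $u\otimes L_\chi(0)$ established before \eqref{pre tensor iso}, and the identity $u\widetilde{L_\chi}(0,\frak p_I)=u\otimes L_\chi(0)$ already derived in the text, which together guarantee that $u\widetilde{L_\chi}(0)$ is genuinely the claimed irreducible module and not the zero module.
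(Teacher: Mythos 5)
Your proposal is correct and follows essentially the same route as the paper: both combine Lemma \ref{lem: 2.5.2} and the isomorphism $\widetilde{L_\chi}(0)\cong\widetilde{Z_\chi}(0,\alpha_n)$ with \eqref{eq: pre iso} at $\lambda=0$, transporting the submodule $u(\cdot)$ through $U_\chi(\widetilde\ggg)$-isomorphisms and restricting to $U_\chi(\ggg)$. The only difference is that you spell out the (elementary) compatibility of $u$ with module isomorphisms, which the paper leaves implicit.
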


\begin{proof}
	Because $\widetilde{L_\chi}(0) \cong \widetilde{L_\chi}(0,\frak p_I)$, $\widetilde{L_\chi}(0) \cong \widetilde{Z_\chi}(0,\alpha_{n})$ are $U_\chi(\widetilde{\ggg})$-module isomorphisms, then $u \widetilde{L_\chi}(0)$ is a $U_\chi(\ggg)$-module yields that $u \widetilde{L_\chi}(0) \cong u\widetilde{L_\chi}(0,\frak p_I)$ and $u \widetilde{L_\chi}(0) \cong u \widetilde{Z_\chi}(0,\alpha_{n})$ are $U_\chi(\ggg)$-module isomorphisms. So take $\lambda=0$ in (\ref{eq: pre iso}), we have $U_\chi(\ggg)$-module isomorphisms $u \widetilde{L_\chi}(0) \cong u\widetilde{L_\chi}(0,\frak p_I) \cong L_\chi(0)$.
\end{proof}

Note that Theorem \ref{Theorem 2.5.5}(2) can be derived from Lemma \ref{lem: fundamental iso}, thus we have finished the proof of \ref{Theorem 2.5.5}.


In a brief words, we have
\begin{corollary}\label{final result} Let $\ggg=\frak {sl}_{n}$ with $p$ divides $n$ and $u=\prod\limits_{i=1}^{n}X_{-\theta_i}^{p-1}$ with $\theta_i:=\sum\limits_{k=i}^n\alpha_{k}$($1 \leq i \leq n$). For $\lambda \in \mathscr{X}_p=\sum_{i=1}^{n-1}\mathbb{F}_p\alpha_{i}$, the regular nilpotent irreducible modules are described as below
\begin{equation*}L_\chi(\lambda) \cong \begin{cases}
		Z_\chi(\lambda), & \text{if $\lambda \notin W.0$;}\\
		L_\chi(0) \cong u \widetilde{L_\chi}(0) \cong u \widetilde{Z_\chi}(0,\alpha_{n}), & \text{if $\lambda \in W.0$}.
	\end{cases}
\end{equation*}
where $W=W(\ggg)=\frak{S}_n$ is the Weyl group of $\ggg$.
\end{corollary}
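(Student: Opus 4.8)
The plan is to read the corollary off directly from Theorem~\ref{thm: 2.4.12} and Theorem~\ref{Theorem 2.5.5}: those two results together already determine $L_\chi(\lambda)$ in both regimes, so this statement is a repackaging of them and no essentially new argument is needed.

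First I would treat the case $\lambda\notin W.0$. By Theorem~\ref{thm: 2.4.12}(2) the baby Verma module $Z_\chi(\lambda)$ is itself an irreducible $U_\chi(\frak{sl}_n)$-module. Since $L_\chi(\lambda)$ is, by Lemma~\ref{lem: univ}, the unique simple quotient of $Z_\chi(\lambda)$, the canonical surjection $Z_\chi(\lambda)\twoheadrightarrow L_\chi(\lambda)$ is forced to be an isomorphism, whence $L_\chi(\lambda)\cong Z_\chi(\lambda)$.

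Next I would treat the case $\lambda\in W.0$. Since $\chi$ is of standard Levi form associated with $I=\{\alpha_1,\dots,\alpha_{n-1}\}$ we have $W_I=W$, so $\lambda\in W.0=W_I.0$, and Lemma~\ref{lem: fundamental iso} gives $L_\chi(\lambda)\cong L_\chi(0)$. Then Theorem~\ref{thm: 2.5.4} (which is Theorem~\ref{Theorem 2.5.5}(1)) supplies the $U_\chi(\ggg)$-module isomorphism $L_\chi(0)\cong u\,\widetilde{L_\chi}(0)$, and the identification $\widetilde{L_\chi}(0)\cong\widetilde{Z_\chi}(0,\alpha_{n})$ recorded there (from \cite[Theorem~2.6(a)]{Jan3}) lets me rewrite $u\,\widetilde{L_\chi}(0)$ as $u\,\widetilde{Z_\chi}(0,\alpha_{n})$. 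Chaining these isomorphisms gives $L_\chi(\lambda)\cong L_\chi(0)\cong u\,\widetilde{L_\chi}(0)\cong u\,\widetilde{Z_\chi}(0,\alpha_{n})$, which is precisely the asserted description.

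The step I would flag as requiring care — though it is a mild one rather than a real obstacle, since all the substantive work was done in the preceding subsections — is the bookkeeping of identifications: that $u=\prod_{i=1}^{n}X_{-\theta_i}^{p-1}$ is well defined independently of the order of its factors (as observed in the text preceding Lemma~\ref{lem: 2.5.2}), and that $\lambda$ is consistently viewed in $\hhh^*$ and in $\widetilde{\hhh}^*$ so that the condition ``$\lambda\in W.0$'' carries the same meaning on both sides, which is guaranteed by Lemma~\ref{lem: 2.4.10}(2). With those identifications fixed the corollary follows formally from the cited results.
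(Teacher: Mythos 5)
Your proposal is correct and follows essentially the same route as the paper: the case $\lambda\notin W.0$ is read off from Theorem~\ref{thm: 2.4.12}, and the case $\lambda\in W.0$ is obtained by chaining Lemma~\ref{lem: fundamental iso} with Theorem~\ref{Theorem 2.5.5} (via Theorem~\ref{thm: 2.5.4} and the identification $\widetilde{L_\chi}(0)\cong\widetilde{Z_\chi}(0,\alpha_n)$). Your extra remarks on the well-definedness of $u$ and the consistency of the identifications are sensible housekeeping but introduce nothing beyond what the paper already records.
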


\begin{proof}
	If $\lambda \notin W.0$, Theorem \ref{thm: 2.4.12} implies that $Z_\chi(\lambda) \cong L_\chi(\lambda)$ as $U_\chi({\ggg})$-modules.
	
	If $\lambda \in W.0$, by Theorem \ref{Theorem 2.5.5}, we have $u \widetilde{L_\chi}(0) \cong {L_\chi}(0) \cong {L_\chi}(\lambda)$ as $U_\chi({\ggg})$-modules. Thus yields the result.
\end{proof}

 \section{Irreducible modules of $\frak {sl}(np)$ for subregular nilpotent case}\label{sec: 3}

 In this section, let $\ggg=\frak {gl}_n(\bbk), \ggg'=\frak {sl}_n(\bbk)$. Let $T$ and $W$ be the maximal torus and Weyl group of $\GL_n(\bbk)$ respectively, denote by $R^+$ the set of positive roots in $\GL_n(\bbk)$ which can also be regarded as the set of positive roots of $\SL_n(\bbk)$.
Assume that $\chi \in {\ggg}^*$ is subregular nilpotent, and has form \eqref{eq: subregular}. This is to say, $\chi$ is a standard Levi form associated with $I=\{\alpha_{1},\alpha_{2},\cdots,\alpha_{n-2}\}$.
\subsection{Setting-up}
\subsubsection{} Recall that $W_I$ is the group generated by $s_{\alpha_i}$ with $1 \leq i \leq n-2$.  Obviously elements in $W_I$ all fix $\varepsilon_{n}$. Let $\sigma=s_1s_2\cdots s_{n-1}$, then $\sigma(i)=i+1$ for $1 \leq i \leq {n-1}$ and $\sigma(n)=1$. Thus $\sigma^j$, $j=0,1,\ldots,n-1$ are the representatives of  the right coset of $W_I$ in $W$.

Set $C=\{\mu\in X(T) \mid 0\leq\langle\mu+\rho,\alpha^{\vee}\rangle \leq p \mbox{ for all }\alpha\in R^+ \}$ be the closure of the first dominant alcove of $X(T)$.

Let $\lambda$ be any given element in $X(T)$. Then, there exists $\lambda_0 \in C$ and $w \in W$ such that $Z_\chi(\lambda) \cong Z_\chi(w.\lambda_0)$. Clearly, $w \in W$ implies $w \in W_I\sigma^i$ for some $0 \leq i \leq {n-1}$, thus $\lambda \in W_I\sigma^i.\lambda_0$. Since we have $Z_\chi(\mu) \cong Z_\chi(w.\mu)$ for all $w \in W_I$ and $\mu \in X(T)$ (see Lemma \ref{lem: fundamental iso}), then $Z_\chi(\lambda) \cong Z_\chi(w.\lambda_0) \cong Z_\chi(\sigma^i.\lambda_0)$. So it suffices to consider $Z_\chi(\sigma^i.\lambda_0)$.

 Now we already have $\lambda_0+\rho=\sum_{i=1}^{n-1}r_i\varpi_i$ with $r_i\geq 0$.
  Set $\lambda_i=\sigma^i.\lambda_0$ for $i=1,\ldots, n$,
  and $r_0=p-\sum_{i=1}^{n-1}r_i$. Let $m_i$ be the integer with $0< m_i\leq p$ and $m_i\equiv \langle \lambda_i+\rho, \alpha_{n-1}^{\vee}\rangle (\text{mod } p)$.

\subsubsection{$U_\chi(\ggg)$-$T_0$-modules $\widehat Z_\chi(\lambda_i, \alpha_{n-1})$}
 For $\psi=\sum_{i=1}^{n-1} i\alpha_i^\vee\in Y(T)$, take $T_0=\bigcap_{i=1}^{n-2} \ker(\alpha_i)$. Then $T_0$ is  a connected closed subgroup of $T$, which is actually the one parameter subgroup corresponding to $\psi$.

Following \cite[\S1.8]{Jan3}, we introduce the category of $U_\chi(\ggg)$-$T_0$-modules, whose object is by defintion a vector space over $\bbk$ that has a structure of both a $U_\chi(\ggg)$-module and as a $T_0$-module such that both structure are compatible, i.e.
$t(Xv)=\textsf{Ad}(t)(X)(tv)$ for all $t\in T_0$, $X\in\ggg$ and $v\in V$; and
the restriction of the $\ggg$-action on $V$ to $\Lie (T_0)$ is equal to the derivative of the $T_0$-aciton on $V$.

 Keep in mind that  $\chi(X_{-\alpha_{n-1}})=0$, and  $\chi$ is a standard Levi form with $I=\{\alpha_{1},\ldots,\alpha_{n-2}\}$. Thus we can define $Z_\chi(\lambda_i,\alpha_{n-1})$ ($0 \leq i \leq n-1$) as in \eqref{eq: levi verma}. Furthermore, this module is naturally endowed with $U_\chi(\ggg)$-$T_0$-module structure, which is denoted by $\widehat Z_\chi(\lambda_i, \alpha_{n-1})$. By definition, $\widehat Z_\chi(\lambda_i, \alpha_{n-1})$($0 \leq i \leq n-1$) has a basis of the form (see \cite[Section 1.5 Case 2]{Jan3} or see the last paragraph of \cite[Section 2.2]{Jan3})
 \begin{align}\label{eq: subregular basis}
 \prod_{\alpha>0,\alpha\neq\alpha_{n-1}} X_{-\alpha}^{m(\alpha)} \otimes (X_{-\alpha_{n-1}}^i \otimes 1_{\lambda_i})
 \end{align}
where $m(\alpha)$ runs from $0$ to $p-1$, and $i$ from $0$ to $m_i-1$.
Naturally, both $Z_\chi(\mu)$ and $L_\chi(\mu)$ for any $\mu\in X(T)$ are endowed with $U_\chi(\ggg)$-$T_0$-module structure, denoted by $\widehat Z_\chi(\mu)$ and $\widehat L_\chi(\mu)$, respectively.

Although $\ggg=\frak {gl}_n(\bbk)$ with $p \mid n$ is not the case in \cite{Jan3}. But we can also simulate the proof of \cite[Lemma 2.3 and Theorem 2.6]{Jan3} to conlude the following Theorem(see \cite[Remark 9.3]{GP}).

\begin{thm}\label{thm: 3.1.2}
(1) Assume that $0 \leq i\leq n-1$. For each $\widehat L_\chi(\lambda_j)$ with $0 \leq j \leq {n-1}$, we have $[\widehat Z_\chi(\lambda_i):\widehat L_\chi(\lambda_j)]=1$, both as $U_\chi(\ggg)$-modules, and as $U_\chi(\ggg)$-$T_0$-modules. It satisfies that $\text{dim}L_\chi(\lambda_j)=p^{N-1}r_{n-1-j}$, where $N=\sharp{R^+}$.  Furthermore, $\widehat L_\chi(\lambda_j) \cong \widehat Z_\chi(\lambda_j,\alpha_{n-1})$ for $0 \leq i \leq {n-1}$, and we have $[\widehat Z_\chi(\lambda_i)]=\sum_{r_{n-1-j}>0}[\widehat L_\chi(\lambda_j)]$.

(2) If $r_{n-i-1}=0$ with $0 \leq i \leq {n-2}$, then $\widehat Z_\chi(\lambda_i) \cong \widehat Z_\chi(\lambda_{i+1})$; if $r_0=0$, $\widehat Z_\chi(\lambda_{n-1}) \cong \widehat Z_\chi(\lambda_0)$.

(3) Moreover, the $U_\chi(\ggg)$-module $Z_\chi(\lambda_i)$ is uniserial (see \cite[Theorem 2.6(b)]{Jan3}).
\end{thm}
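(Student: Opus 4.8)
The plan is to carry Jantzen's analysis of subregular baby Verma modules for $\sl_n$ with $p\nmid n$ in \cite{Jan3} over, essentially verbatim, to $\ggg=\gl_n$. The point to appreciate — and the reason this transfer is legitimate for any $p$, in particular for $p\mid n$ — is that the obstruction which makes $\sl_n$ hard when $p\mid n$, namely the absence of a non-degenerate $G$-invariant form and hence of a $G$-equivariant identification $\ggg\cong\ggg^*$, simply does not arise for $\gl_n$: the trace form $\langle X,Y\rangle=\mathrm{tr}(XY)$ is non-degenerate and $\GL_n$-invariant in every characteristic. So I would first record that, thanks to this form, all the machinery Jantzen uses is available for $U_\chi(\gl_n)$ with $\chi$ subregular of the form \eqref{eq: subregular}: the Kac--Weisfeiler/Friedlander--Parshall reduction, the linkage principle, the Jantzen sum formula for baby Verma modules, the translation functors, a contravariant duality fixing simples, and the restricted $\sl_2$-calculus attached to the ``extra'' simple root $\alpha_{n-1}\notin I$ (here $\chi(X_{\pm\alpha_{n-1}})=0$, so the Levi modules $Z_{\chi,\alpha_{n-1}}(\mu)$ and the maps $\varphi$ of $\S\ref{SAC}$ behave exactly as over $U_0(\sl_2)$); this transfer is what \cite[Remark 9.3]{GP} records. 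Throughout one keeps the $U_\chi(\ggg)$-$T_0$-structures of $\S\ref{sec: 3}$: since $T_0$ is connected with $\Lie(T_0)\subseteq\hhh$, a compatible $T_0$-action on a $U_\chi(\ggg)$-module with semisimple $\hhh$-action is essentially canonical, so it changes neither simplicity nor composition multiplicities — every statement proved for $U_\chi(\ggg)$-modules upgrades for free to the $T_0$-equivariant category — while the $T_0$-grading separates the various $\widehat L_\chi(\lambda_j)$.

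Two numerical inputs come first. By the argument of Lemma~\ref{cor: 2.5.1}, taking $\frak m=\bigoplus_{\alpha\in R^+,\,\alpha\neq\alpha_{n-1}}\ggg_{-\alpha}$ yields a unipotent restricted subalgebra of dimension $N-1$ with $\chi([\frak m,\frak m])=\chi(\frak m^{[p]})=0$ (because $\chi$ is of standard Levi form with $I=\{\alpha_1,\dots,\alpha_{n-2}\}$) and $\frak m\cap\ggg_\chi=\{0\}$ (a short computation with the equations $\chi([x,\ggg])=0$ exactly parallel to that in Lemma~\ref{cor: 2.5.1}); then Lemma~\ref{lem: 1.2.1} forces $p^{N-1}\mid\dim M$ for every non-zero $U_\chi(\ggg)$-module $M$. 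On the other hand, the basis \eqref{eq: subregular basis} gives $\dim\widehat Z_\chi(\lambda_i,\alpha_{n-1})=p^{N-1}m_i$, and a direct computation with the $n$-cycle $\sigma=s_1\cdots s_{n-1}$ (as in \cite[\S2.1]{Jan3}) shows $\langle\lambda_i+\rho,\alpha_{n-1}^{\vee}\rangle=r_{n-1-i}$ as an integer in $[0,p]$, so $m_i=r_{n-1-i}$ when $r_{n-1-i}>0$ and $m_i=p$ when $r_{n-1-i}=0$; in the latter case $\widehat L_\chi(\lambda_i)$ is read as $0$, so there is nothing further to prove about it.

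The core — the analog of \cite[Lemma 2.3]{Jan3} — is to show that for $r_{n-1-i}>0$ the parabolically induced module $\widehat Z_\chi(\lambda_i,\alpha_{n-1})$ is already simple, hence $\cong\widehat L_\chi(\lambda_i)$ and of dimension $p^{N-1}r_{n-1-i}$. Since $Z_\chi(\lambda_i,\alpha_{n-1})$ is a quotient of the baby Verma $Z_\chi(\lambda_i)$ it has simple head $L_\chi(\lambda_i)$; applying the contravariant duality it has simple socle $L_\chi(\lambda_i)$; and the Jantzen sum formula for $Z_\chi(\lambda_i,\alpha_{n-1})$, combined with the restricted $\sl_2$-calculus for $\alpha_{n-1}$, leaves no room for a further composition factor. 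This — and more broadly the faithful transcription of \cite[\S2]{Jan3} to $\gl_n$ — is the step I expect to be the main obstacle: heuristically ``nothing changes'', but one must genuinely re-traverse Jantzen's arguments and confirm at each turn that only the $\gl_n$-invariant form and properties of $U_\chi(\gl_n)$ valid for all $p$ are used, with particular care for the extra central line $\bbk z$ of $\gl_n$ (on which $z^p-z^{[p]}=z^p-z$, so $z$ acts diagonalisably with eigenvalues in $\bbf_p$ rather than centrally in any naive sense), which is absent from Jantzen's set-up. Once this is done, $\dim L_\chi(\lambda_j)=p^{N-1}r_{n-1-j}$, which is the dimension formula in (1).

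Finally, with $\widehat L_\chi(\lambda_j)\cong\widehat Z_\chi(\lambda_j,\alpha_{n-1})$ and its dimension known for all $j$ with $r_{n-1-j}>0$, I would compute $[\widehat Z_\chi(\lambda_i)]$ in the Grothendieck group and read off (1), (2) and (3) together, following \cite[Theorem 2.6]{Jan3}. The linkage principle confines attention to the block containing the $\widehat L_\chi(\lambda_j)$, and the structure maps between consecutive $\widehat Z_\chi(\lambda_i)$ built in \cite[\S2]{Jan3} (using the $\varphi$'s of $\S\ref{SAC}$, the identity $\sigma.\lambda_i=\lambda_{i+1}$ and Lemma~\ref{lem: fundamental iso}) give a filtration of $Z_\chi(\lambda_i)$ whose successive quotients run cyclically through the modules $L_\chi(\lambda_j)$ with $r_{n-1-j}>0$, each exactly once; since $\sum_{k=0}^{n-1}r_k=p$ one has $\sum_j\dim L_\chi(\lambda_j)=p^{N-1}\cdot p=p^N=\dim Z_\chi(\lambda_i)$, so this filtration is a composition series, whence $[\widehat Z_\chi(\lambda_i):\widehat L_\chi(\lambda_j)]=1$ and $[\widehat Z_\chi(\lambda_i)]=\sum_{r_{n-1-j}>0}[\widehat L_\chi(\lambda_j)]$, which is (1). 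When $r_{n-i-1}=0$ the $i$-th term of the cycle disappears and the corresponding structure map between $\widehat Z_\chi(\lambda_i)$ and $\widehat Z_\chi(\lambda_{i+1})$, being surjective between modules of equal dimension $p^N$, is an isomorphism (and similarly $\widehat Z_\chi(\lambda_{n-1})\cong\widehat Z_\chi(\lambda_0)$ when $r_0=0$), which is (2). Since every composition factor of $Z_\chi(\lambda_i)$ then occurs once while both its socle and its head are simple, its composition series is forced to be a chain, giving the uniseriality in (3) exactly as in \cite[Theorem 2.6(b)]{Jan3}.
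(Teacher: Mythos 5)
Your proposal is correct and follows essentially the same route as the paper: the paper's entire argument for Theorem \ref{thm: 3.1.2} is to invoke the non-degeneracy of the trace form on $\gl_n$ in every characteristic and then ``simulate'' Jantzen's proof of \cite[Lemma 2.3 and Theorem 2.6]{Jan3} (citing \cite[Remark 9.3]{GP}), which is precisely your strategy. Your write-up is in fact more explicit than the paper's about what the transcription involves (the $p^{N-1}$-divisibility via Lemma \ref{lem: 1.2.1}, the identification $m_i=r_{n-1-i}$, the dimension count $\sum_k r_k=p$ forcing the filtration to be a composition series, and the care needed for the central line $\bbk z$), but the underlying approach is the same.
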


\begin{remark}
	Theorem \ref{thm: 3.1.2}(1) implies that the composition factor of $\widehat Z_\chi(\lambda_i)$ is of the form $\widehat L_\chi(\lambda_j)$($r_{n-j-1}>0$) has multiplicity 1, both as a $U_\chi(\ggg)$-module and as a $U_\chi(\ggg)$-$T_0$-module.
\end{remark}
\begin{remark}
	Let $G=\text{SL}_n(k)$ if $\text{char}(k)=p$ does not divide $n$, and $G=\text{GL}_n(k)$ if $\text{char}(k)=p>0$(no matter whether $\text{char}(k)=p$ divides $n$ or not). Let $\ggg=\text{Lie}(G)$, so that $\ggg=\frak {sl}_n(k)$, respectively $\ggg=\frak {gl}_n(k)$. Then $B: \ggg \times \ggg \rightarrow k$ with $B(x,y)=\text{tr}(xy)$ for $x,y \in \ggg$ is a non-degenerate $G$-invariant bilinear form. So we can conclude Theorem 3.1 in theses cases.
	
	However if $G=\text{SL}_n(k)$ with $\text{char}(k)=p$ divides $n$, then $B$
	is no longer non-degenerate. We do not have a non-degenerate $G$-invariant bilinear form on $\ggg=\text{Lie}(G)=\frak {sl}_n(k)$. So we can not get Theorem 3.1 in this case.
\end{remark}




\subsection{The restriction of $L_\chi(\lambda_i)$ to $U_\chi(\frak g')$}
\begin{thm}\label{thm: 3.1.3} ${\res}_\chi L_\chi(\lambda_i)$ is a simple $U_\chi(\frak {sl}_n)$-module.
\end{thm}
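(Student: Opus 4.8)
The plan is to exploit the same restriction-induction machinery developed in \S2 for the regular case, now applied to the subregular setting. First I would observe that $\chi\in\frak{gl}_n^*$ subregular nilpotent of form \eqref{eq: subregular} is of standard Levi form associated with $I=\{\alpha_1,\ldots,\alpha_{n-2}\}$, so that $L_\chi(\lambda_i)$ is a simple $U_\chi(\frak{gl}_n)$-module (this is the content of Theorem \ref{thm: 3.1.2}(1), where $\widehat L_\chi(\lambda_i)\cong\widehat Z_\chi(\lambda_i,\alpha_{n-1})$ gives its explicit dimension $p^{N-1}r_{n-1-i}$). The key reduction is Corollary \ref{cor: 2.4.3}: for a simple $U_\chi(\frak{gl}_n)$-module $M$ there is a simple $U_\chi(\frak{sl}_n)$-module $L$ and a positive integer $m$ with $[{\res}_\chi M]=m[L]$, and by Proposition \ref{Prop: 2.2.4} (whose proof only uses $\frak{gl}_n/\frak{sl}_n\cong\bbk z$ and goes through verbatim here) the multiplicity $m$ equals $1$ unless $L_\chi(\lambda_i)\otimes\bbk_1\cong L_\chi(\lambda_i)$, in which case ${\res}_\chi L_\chi(\lambda_i)$ splits into $p$ isomorphic copies of $L$.

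So the crux is to show $L_\chi(\lambda_i)\otimes\bbk_1\ncong L_\chi(\lambda_i)$. For this I would mimic Lemma \ref{lem: 2.3.2} and Proposition \ref{prop: 2.3.3}: by Lemma \ref{lem: 2.3.2}, $L_\chi(\lambda_i)\otimes\bbk_a\cong L_\chi(\lambda_i+a\varpi_n)$, and by Lemma \ref{lem: fundamental iso 2} the isomorphism $L_\chi(\lambda_i)\otimes\bbk_1\cong L_\chi(\lambda_i)$ holds iff $\lambda_i+\varpi_n\in W_I.\lambda_i$. The point now is that $W_I=\frak S_{n-1}$ (generated by $s_{\alpha_1},\ldots,s_{\alpha_{n-2}}$) fixes the coordinate $\varepsilon_n$, so every $w\in W_I$ fixes the last entry of $\lambda_i+\rho$ while $\lambda_i+\rho+\varpi_n$ has its last entry shifted by $1$; hence $\lambda_i+\varpi_n\notin W_I.\lambda_i$. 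Therefore $m=1$ and ${\res}_\chi L_\chi(\lambda_i)=L$ is simple. Alternatively, and perhaps cleaner, one can compare dimensions: $\dim L_\chi(\lambda_i)=p^{N-1}r_{n-1-i}$ from Theorem \ref{thm: 3.1.2}(1), and if ${\res}_\chi L_\chi(\lambda_i)=p[L]$ then $\dim L=p^{N-2}r_{n-1-i}$, which would have to be divisible by the power of $p$ forced by a suitable unipotent subalgebra of $\frak{sl}_n$ of dimension $N-1$; arguing as in Lemma \ref{cor: 2.5.1} one finds $p^{N-1}\mid\dim L$ for any nonzero $U_\chi(\frak{sl}_n)$-module, contradicting $p^{N-2}\nmid$-divisibility unless $p\mid r_{n-1-i}$, which is excluded since $0<r_{n-1-i}\leq p$. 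Either route pins down $m=1$.

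The main obstacle I anticipate is verifying that Proposition \ref{Prop: 2.2.4} and the projective-cover argument in \S\ref{sec: proof of thm 0.2} transfer to the subregular $\chi$ without change — in particular that $L_\chi(\lambda_i)$ genuinely is simple over $U_\chi(\frak{gl}_n)$ (which needs the $\frak{gl}_n$ version of the standard Levi form theory, available since $\frak{gl}_n$ carries a non-degenerate invariant form even when $p\mid n$) and that $E\otimes\bbk_a$ for $a\neq0$ is never isomorphic to $E$. Both of these are handled by the $W_I$-orbit criterion above, since for subregular $I$ the group $W_I$ is a proper parabolic subgroup of $\frak S_n$ and cannot absorb a translation by $\varpi_n$. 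Once that is in place, Corollary \ref{cor: 2.4.3} with $m=1$ gives ${\res}_\chi L_\chi(\lambda_i)\cong L$ simple, completing the proof; one then records that the resulting simple $U_\chi(\frak{sl}_n)$-module, together with Lemma \ref{lem: 2.4.10}, is exactly what appears in Theorem \ref{thm:subregular}(1), with its basis \eqref{eq: subregular basis} and dimension $p^{N-1}r_{n-1-i}$ inherited from $L_\chi(\lambda_i)$.
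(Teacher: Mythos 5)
Your main argument is essentially the paper's proof: both show $L_\chi(\lambda_i)\otimes\bbk_1\ncong L_\chi(\lambda_i)$ by observing that $W_I\cong\frak S_{n-1}$ fixes the $n$-th coordinate of $\lambda_i+\rho$ while adding $\varpi_n$ shifts it by $1$ (via Lemmas \ref{lem: 2.3.2} and \ref{lem: fundamental iso 2}), and then conclude that the multiplicity $m$ in $[{\res}_\chi E]=m[L]$ from Corollary \ref{cor: 2.4.3} equals $1$ using the restriction--induction machinery of \S 2 (the paper computes $[{\res}_\chi E:L]=1$ via the projective cover $P_E$ and Lemma \ref{lem: 2.4.9}, which is the same computation underlying your appeal to Proposition \ref{Prop: 2.2.4}). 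Your secondary dimension-counting alternative is dispensable and would need extra care --- Lemma \ref{cor: 2.5.1} is stated for regular $\chi$, and the case $r_{n-1-i}=p$ defeats the intended contradiction --- but the primary route is correct and matches the paper.
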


\begin{proof} Keep in mind that $\lambda_i+\rho=\sum_{j=1}^{n}a_j\varepsilon_j$ with $a_j \in \mathbb{F}_p$ and $W_I=\langle s_\alpha \mid \alpha \in I \rangle\cong\frak{S}_{n-1}$. So all permutations from $W_I$ preserve $\{n\}$. Hence, for each $\sigma \in W_I$, we have $\sigma(n)=n$ which implies that $a_{\sigma(n)} \neq a_n+1$. Since $\sigma^{-1}(\lambda_i+\rho)=\sum_{j=1}^{n}a_{\sigma(j)}\varepsilon_j$ $\lambda_i+\rho+\varpi_n=\sum_{j=1}^{n}(a_j+1)\varepsilon_j$, then $\lambda_i+\rho+\varpi_n \neq \sigma^{-1}(\lambda_i+\rho)$, namely $\sigma^{-1}.\lambda_i \neq \lambda_i+\varpi_n$. Due to Lemma \ref{lem: fundamental iso 2}, $L_\chi(\lambda_i) \ncong L_\chi(\lambda_i+\varpi_n)$. Lemma \ref{lem: 2.3.2} implies that $L_\chi(\lambda_i) \otimes k_1=L_\chi(\lambda_i+\varpi_n)$, thus $L_\chi(\lambda_i) \otimes k_1 \ncong L_\chi(\lambda_i+\varpi_n)$. So we conclude that $E \otimes k_1 \ncong E$. By Lemma \ref{lem: 2.2.3}(2), we have $E \otimes k_a \ncong E$ for all $a \in \mathbb{F}_p^\times$.

 Let $P_E$ be the projective cover of $E$ as $U_{\chi}(\frak {gl}_n)$-module. Corollary \ref{cor: 2.4.8} yields that there exists a simple $U_{\chi}(\frak {sl}_n)$-module $L$, such that ${\res}_{\chi}P_E$ is the projective cover of $L$.  Obviously, we have $\dim\Hom_{\frak {gl}_n}(P_E, E)=1$. Since $E$ is non-isomorphic to $E \otimes \bbk_a$ for all $a \in \mathbb{F}_p^\times$, then $\Hom_{\frak {gl}_n}(P_E, E \otimes \bbk_a)=0$ for all $a \in \mathbb{F}_p^\times$. Thus we have
 \begin{align*}
 [{\res}_{\chi}E: L]&\xlongequal[]{\text{Lemma }\ref{lem: 2.4.9}}{\dim\Hom}_{\frak {sl}_n}({\res}_{\chi}P_E, {\res}_{\chi}E)\xlongequal[]{(\ref{sum})}\bigoplus_{a\in \mathbb{F}_p}{\dim\Hom}_{\frak {sl}_n}({\res}_{\chi}P_E, {\res}_{\chi}E)_a\cr
 &\xlongequal[]{\text{Lemma }\ref{lem: 2.2.1}}\bigoplus_{a\in \mathbb{F}_p}{\dim\Hom}_{\frak {gl}_n}(P_E, E \otimes \bbk_a)=1.
 \end{align*}
 The theorem follows from  Corollary \ref{cor: 2.4.3}.
\end{proof}

\begin{remark}\label{res is simple}
For any $\lambda \in X(T)$, ${\res}_\chi L_\chi(\lambda)$ is a simple $U_\chi(\frak {sl}_n)$-module.
\end{remark}

\begin{remark} Theorem \ref{thm: 3.1.3} implies that ${\res}_\chi Z_\chi(\lambda_i)$ has compositon factors of the form ${\res}_\chi L_\chi(\lambda_j)$ with $0 \leq j \leq {n-1}$ and $r_{n-1-j}>0$, but the multiplicity may not always be 1. The multiplicity will be given in Corollary \ref{cor: 3.1.5}.
\end{remark}


\begin{corollary}\label{cor: 3.1.4} Let $\chi \in {\ggg}^*$ be of standard Levi form with $I=\{\alpha_{1},\ldots,\alpha_{n-2}\}$. Then simple $U_\chi({\ggg}')$-module ${\res}_\chi L_\chi(\lambda_{i})$ and ${\res}_\chi L_\chi(\lambda_{j})$ with $0 \leq i,j \leq n-1$ are isomorphic if and only if $L_\chi(\lambda_{i}) \cong L_\chi(\lambda_{j}) \otimes \bbk_a$ as $U_\chi(\ggg)$-module for some $a \in \mathbb{F}_p$.
\end{corollary}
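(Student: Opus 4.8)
The plan is to reduce the statement about restriction to $\frak{sl}_n$ to the criterion already established for $\frak{gl}_n$-modules twisted by the one-dimensional modules $\bbk_a$. First I would recall from \eqref{sum} and Lemma \ref{lem: 2.2.1} that for finite-dimensional $U_\chi(\frak{gl}_n)$-modules $M$ and $N$ one has
\[
\Hom_{\frak{sl}_n}(\res_\chi M,\res_\chi N)\cong\bigoplus_{a\in\mathbb F_p}\Hom_{\frak{gl}_n}(M\otimes\bbk_a,N).
\]
Applying this with $M=L_\chi(\lambda_i)$ and $N=L_\chi(\lambda_j)$, both simple $U_\chi(\ggg)$-modules, each summand $\Hom_{\frak{gl}_n}(L_\chi(\lambda_i)\otimes\bbk_a,L_\chi(\lambda_j))$ is either $0$ or one-dimensional, and it is nonzero precisely when $L_\chi(\lambda_i)\otimes\bbk_a\cong L_\chi(\lambda_j)$ (Schur's lemma, since $\bbk$ is algebraically closed and both modules are simple of the same dimension in that case). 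Hence $\Hom_{\frak{sl}_n}(\res_\chi L_\chi(\lambda_i),\res_\chi L_\chi(\lambda_j))\neq 0$ if and only if there exists $a\in\mathbb F_p$ with $L_\chi(\lambda_i)\cong L_\chi(\lambda_j)\otimes\bbk_a$ as $U_\chi(\ggg)$-modules (replacing $a$ by $-a$ to move the twist to the other side).

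Next I would invoke Theorem \ref{thm: 3.1.3} (equivalently Remark \ref{res is simple}): both $\res_\chi L_\chi(\lambda_i)$ and $\res_\chi L_\chi(\lambda_j)$ are \emph{simple} $U_\chi(\ggg')$-modules. For simple modules over a finite-dimensional algebra, a nonzero homomorphism is automatically an isomorphism; conversely an isomorphism gives a nonzero homomorphism. Therefore $\res_\chi L_\chi(\lambda_i)\cong\res_\chi L_\chi(\lambda_j)$ if and only if $\Hom_{\frak{sl}_n}(\res_\chi L_\chi(\lambda_i),\res_\chi L_\chi(\lambda_j))\neq 0$, which by the previous paragraph happens exactly when $L_\chi(\lambda_i)\cong L_\chi(\lambda_j)\otimes\bbk_a$ for some $a\in\mathbb F_p$. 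This chain of equivalences is precisely the assertion of the corollary.

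The argument is essentially formal once the two inputs — the Hom-space decomposition (\ref{sum})/Lemma \ref{lem: 2.2.1} and the simplicity of the restrictions (Theorem \ref{thm: 3.1.3}) — are in hand, so I do not expect a genuine obstacle. The only point requiring a little care is making sure the twist lands on the correct side: since $\bbk_a\otimes\bbk_{-a}\cong\bbk_0$ is trivial by Lemma \ref{lem: 2.2.3}(1), the conditions ``$L_\chi(\lambda_i)\cong L_\chi(\lambda_j)\otimes\bbk_a$ for some $a$'' and ``$L_\chi(\lambda_j)\cong L_\chi(\lambda_i)\otimes\bbk_a$ for some $a$'' are equivalent, so the statement is symmetric in $i$ and $j$ as it should be. One should also note that $L_\chi(\lambda_j)\otimes\bbk_a$ is again a simple $U_\chi(\ggg)$-module (as remarked after Lemma \ref{lem: 2.2.2}), so Schur's lemma applies to it as well.
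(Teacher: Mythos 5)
Your argument is correct, and it takes a genuinely different route from the paper. You work entirely with Hom spaces: the eigenspace decomposition $\Hom_{\frak{sl}_n}(\res_\chi M,\res_\chi N)\cong\bigoplus_{a\in\mathbb F_p}\Hom_{\frak{gl}_n}(M\otimes\bbk_a,N)$ from Lemma \ref{lem: 2.2.1}, Schur's lemma applied to the simple $U_\chi(\frak{gl}_n)$-modules $L_\chi(\lambda_i)\otimes\bbk_a$, and the simplicity of the restrictions from Theorem \ref{thm: 3.1.3}, which together turn the statement into a chain of equivalences with no case analysis. The paper instead argues through the induction functor: it takes a simple constituent $L$ of $\res_\chi L_\chi(\lambda_j)$, splits into two cases according to whether $L_\chi(\lambda_j)\otimes\bbk_a\cong L_\chi(\lambda_j)$ for some $a\in\mathbb F_p^\times$, computes $[\ind_\chi L]$ in each case by combining Frobenius reciprocity with a dimension count, and then invokes the remark after Corollary \ref{cor: 2.4.3} (that $L$ is a composition factor of $\res_\chi M$ iff $M$ is one of $\ind_\chi L$) to conclude that $L_\chi(\lambda_i)$ must be among the twists $L_\chi(\lambda_j)\otimes\bbk_a$. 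Your version is shorter and more transparent, since the "if" and "only if" directions fall out of a single Hom-space identity; the paper's version is longer but extracts more along the way, namely the explicit composition series $[\ind_\chi L]=\sum_a[L_\chi(\lambda_j)\otimes\bbk_a]$ (resp.\ $\ind_\chi L\cong L_\chi(\lambda_j)$), which is the kind of bookkeeping reused in Corollary \ref{cor: 3.1.5}. Both proofs legitimately rely on Theorem \ref{thm: 3.1.3}, which precedes the corollary, so there is no circularity in your use of it.
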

\begin{proof}
	Let $L$ be a simple $U_\chi({\ggg}')$-module which is a composition factor of $\text{res}_\chi L_\chi(\lambda_{j})$.
	First we consider the composition factors of $\text{ind}_\chi L$.
	
	Case 1. If for all $a \in \mathbb{F}_p$, we have $L_\chi(\lambda_{j}) \otimes \bbk_a \ncong L_\chi(\lambda_{j})$, namely $L_\chi(\lambda_{j}), L_\chi(\lambda_{j}) \otimes \bbk_1,\ldots,L_\chi(\lambda_{j}) \otimes \bbk_{p-1}$ are not isomorphic each other. By the proof of Theorem \ref{thm: 2.4.12} we have $\res_\chi L_\chi(\lambda_{j})$ is simple $U_\chi({\ggg}')$-module, i.e. $\res_\chi L_\chi(\lambda_{j}) \cong L$. By the reciprocity \eqref{eq: 2.1}),
\begin{align*}
\Hom_{\ggg}({\ind}_\chi L, L_\chi(\lambda_{j}) \otimes \bbk_a)
&\cong \Hom_{{\ggg}'}(L, {\res}_\chi (L_\chi(\lambda_{j}) \otimes \bbk_a)) \cr
&\cong \Hom_{{\ggg}'}({\res}_\chi L_\chi(\lambda_{j}), {\res}_\chi L_\chi(\lambda_{j})) \neq 0.
\end{align*}
 So each $L_\chi(\lambda_{i}) \otimes \bbk_a$ with $a \in \mathbb{F}_p$ is a composition factor of ${\ind}_\chi L$. So there exists an $U_\chi(\ggg)$-module $M$, such that $[{\ind}_\chi L]=\sum_{a \in \mathbb{F}_p}[L_\chi(\lambda_{j}) \otimes \bbk_a]+[M]$, thus $\dim ({\ind}_\chi L)=\dim(\bigoplus_{a \in \mathbb{F}_p}L_\chi(\lambda_{j}) \otimes \bbk_a)+\dim M$.
  Since $\res_\chi L_\chi(\lambda_{j}) \cong L$, then we have
  $$\dim(\bigoplus_{a \in \mathbb{F}_p}L_\chi(\lambda_{j}) \otimes \bbk_a)=p\dim L_\chi(\lambda_{j})=p{\dim}({\res}_\chi L_\chi(\lambda_{j}))=p\dim L={\dim}({\ind}_\chi L).$$
  This yields that $\text{dim}M=0$, i.e. $M=0$. Then we have $[{\ind}_\chi L]=\sum_{a \in \mathbb{F}_p}[L_\chi(\lambda_{j}) \otimes \bbk_a]$.
	
	Case 2. If there exists $a \in \mathbb{F}_p^\times$, such that $L_\chi(\lambda_{j}) \otimes \bbk_a \cong L_\chi(\lambda_{j})$, then for all $a \in \mathbb{F}_p$, we have $L_\chi(\lambda_{j}) \otimes \bbk_a \cong L_\chi(\lambda_{j})$(by Lemma \ref{lem: 2.2.3}(2)). The Remark of Corollary \ref{cor: 2.4.3} shows that $L_\chi(\lambda_{j})$ is a composition factor of ${\ind}_\chi L$, then there exists an $U_\chi(\ggg)$-module $M$, such that $[\ind_\chi L]=[M]+[L_\chi(\lambda_{j})]$, thus $\dim (\ind_\chi L)=\dim M+\dim L_\chi(\lambda_{j})$. Since $L_\chi(\lambda_{j}), L_\chi(\lambda_{j}) \otimes k_1,...,L_\chi(\lambda_{j}) \otimes k_{p-1}$ are isomorphic each other, then $[\res_\chi L_\chi(\lambda_{j})]=p[L]$ by immitation of the proof of Theorem \ref{thm: 2.4.12}), hence $\dim L_\chi(\lambda_{j})=\dim (\res_\chi L_\chi(\lambda_{j}))=p \cdot \dim L=\dim(\ind_\chi L)$, namely $\dim L_\chi(\lambda_{j})=\dim(\ind_\chi L)$. So we can conclude that $\dim M=0$(i.e. $M=0$), thus $[\text{ind}_\chi L]=[L_\chi(\lambda_{j})]$. This yields that $\text{ind}_\chi L=L_\chi(\lambda_{j})$.
	
	The arguments in Case 1 and Case 2 show that the composition factor of ${\ind}_\chi L$ must be of the form $L_\chi(\lambda_{j}) \otimes \bbk_a$ for some $a \in \mathbb{F}_p$. Remark 2.12 of Corollary \ref{cor: 2.4.3} implies that $L$ is a composition factor of ${\res}_\chi L_\chi(\lambda_{i})$ if and only if $L_\chi(\lambda_{i})$ is a composition factor of ${\ind}_\chi L$. Hence, $L_\chi(\lambda_{i})$ must be written as $L_\chi(\lambda_{j}) \otimes \bbk_a$ for some $a \in \mathbb{F}_p$.
\end{proof}

\begin{remark}\label{Rem of iso} Assume $\frak S_n$ is the symmetric group with $n$ letters, $\text{Stab}_{\frak S_n}(n)$ means the stabiliser of $n$ in $\frak S_n$, i.e. $\forall \eta \in \text{Stab}_{\frak S_n}(n)$, we have $\eta(n)=n$.
	
	Using Lemma \ref{lem: 2.3.2}, we get $L_\chi(\lambda_{i}) \cong L_\chi(\lambda_{j}) \otimes k_a$ if and only if there exists $w \in W_I$ such that $\lambda_{j}+a\varpi_n+\rho=w(\lambda_{i}+\rho)$, i.e. $\lambda_{j}+a\varpi_n+\rho \in W_I.(\lambda_{i}+\rho)$. Set $\lambda_0+\rho=\sum_{l=1}^{n}t_l\varepsilon_l$ with $t_l \in \mathbb{F}_p$(Note: $\lambda_0+\rho \in X(T)$, namely $\lambda_0+\rho \in \Lambda$), then $\lambda_{i}+\rho=\sigma^i(\lambda_0+\rho)=\sum_{l=1}^{n}t_l\varepsilon_{l+i}=\sum_{l=1}^{n}t_{l-i}\varepsilon_{l}$, $\lambda_{j}+\rho=\sigma^j(\lambda_0+\rho)=\sum_{l=1}^{n}t_l\varepsilon_{l+j}=\sum_{l=1}^{n}t_{l-j}\varepsilon_{l}$, thus $\lambda_{j}+\rho+a\varpi_n=\sum_{l=1}^{n}(t_l+a)\varepsilon_{l+j}=\sum_{l=1}^{n}(t_{l-j}+a)\varepsilon_{l}$  (Note: If $k>n$, then there exists $k'$ with $1 \leq {k}' \leq n$ such that ${k}' \equiv k$(mod $n$). In this case, we consider $\varepsilon_k, t_k$ as $\varepsilon_{k}',t_{k}'$).
	
	Since $\lambda_{j}+a\varpi_n+\rho \in W_I.(\lambda_{i}+\rho)$, then there exists $\tau \in \text{Stab}_{\frak S_n}(n) \subsetneq \frak S_n$, such that $\sum_{l=1}^{n}(t_{l-j}+a)\varepsilon_{l}=\sum_{l=1}^{n}t_{l-i}\varepsilon_{\tau(l)}$. It is easy to see $\sum_{l=1}^{n}(t_{l-j}+a)\varepsilon_{l}=\sum_{l=1}^{n}(t_{\tau(l)-j}+a)\varepsilon_{\tau(l)}$, thus $\sum_{l=1}^{n}(t_{\tau(l)-j}+a)\varepsilon_{\tau(l)}=\sum_{l=1}^{n}t_{l-i}\varepsilon_{\tau(l)}$. This implies that $t_{l-i}-t_{\tau(l)-j}=a$ for all $1 \leq l \leq n-1$. Since $\tau \in Stab_{S_n}(n)$, $\tau(n)=n$. So we have $a=t_{n-i}-t_{\tau(n)-j}=t_{n-i}-t_{n-j}$. This yields that $t_{l-i}-t_{\tau(l)-j}=t_{n-i}-t_{n-j}$ for all $1 \leq l \leq n-1$.
\end{remark}

From now on, let $\hhh$, $\hhh'$ be the cartan subalgebras of $\frak {gl}_n$, $\frak {sl}_n$ respectively; let $T$, $T'$ be the maximal tori of $\text{GL}_n$, $\text{SL}_n$ respectively.

For $\lambda \in X(T')$, we can regard $\lambda$ as an element in $X(T)$ which is denoted by $\underline{\lambda}$, thus $\underline{\lambda}|_{T'}=\lambda$. It is easy to see that $d\lambda \in X(T')/pX(T') \cong \sum_{i=1}^{n-1}\mathbb{F}_p\alpha_i$ and $d\underline{\lambda} \in X(T)/pX(T) \cong \sum_{i=1}^{n}\mathbb{F}_p\varepsilon_i$. Let $C'=\{\mu\in X(T') \mid 0\leq\langle\mu+\rho,\alpha^{\vee}\rangle \leq p \mbox{ for all }\alpha\in R^+ \}$ be the closure of the first dominant alcove of $X(T')$.

For each $\lambda \in X(T')$, there exists $\lambda_0 \in C'$ and $w \in W$ such that $\lambda=w.\lambda_0$, i.e. $\lambda \in W.\lambda_0$. We set $\lambda_i=\sigma^i.\lambda_0$($1 \leq i \leq  n-1$). For $\chi \in \frak {sl}^*_n$, we can extend $\chi$ to the element in $\frak {gl}^*_n$ which is denoted by $\underline{\chi}$, thus $\underline{\chi}|_{\frak {sl}_n}=\chi$.

Given $\lambda \in X(T')$ and $\chi \in \frak {sl}^*_n$ which is subregular nilpotent, we can define $U_\chi(\frak {sl}_n)$-modules $Z_\chi(\lambda)$, $L_\chi(\lambda)$ and $Z_\chi(\lambda,\alpha_{n-1})$. Since $\underline{\lambda} \in X(T)$ and $\underline{\chi} \in \frak {gl}^*_n$, we can define $U_{\underline{\chi}}(\frak {gl}_n)$-modules $Z_{\underline{\chi}}(\underline{\lambda})$, $L_{\underline{\chi}}(\underline{\lambda})$ and $Z_{\underline{\chi}}(\underline{\lambda},\alpha_{n-1})$. For convenience, we set $\underline{Z_\chi}(\lambda):=Z_{\underline{\chi}}(\underline{\lambda})$, $\underline{L_\chi}(\lambda):=L_{\underline{\chi}}(\underline{\lambda})$ and $\underline{Z_\chi}(\lambda,\alpha_{n-1}):=Z_{\underline{\chi}}(\underline{\lambda},\alpha_{n-1})$.

Since there exists $\lambda_0 \in C'$ and $w \in W$ such that $\lambda=w.\lambda_0$, then $w \in W_I.\sigma^i$ for some $0 \leq i \leq {n-1}$ which implies $\lambda \in W_I\sigma^i.\lambda_0$. Since we have an $U_\chi(\frak {sl}_n)$-module isomorphism $Z_\chi(\mu) \cong Z_\chi(w.\mu)$ for all $w \in W_I$ and $\mu \in X(T')$, then $Z_\chi(\lambda)=Z_\chi(w.\lambda_0) \cong Z_\chi(\sigma^i.\lambda_0)$(as $U_\chi(\frak {sl}_n)$-module isomorphism). So it suffices to consider $Z_\chi(\lambda_{i})=Z_\chi(\sigma^i.\lambda_0)$.

For $\lambda \in X(T')$, we can still use $\lambda$, $\underline{\lambda}$ to denote $d\lambda$, $d\underline{\lambda}$ respectively, if the context is clear. Since $d\lambda \in \mathscr X_p:=\bigoplus_{i=1}^{n-1}\mathbb{F}_p\alpha_i$, thus $\lambda$ can be viewed as an element in $\mathscr X_p$. By the definition of $\lambda_{i}$($0 \leq i \leq n-1$), we have $\lambda_{i} \in X(T')$($0 \leq i \leq n-1$), hence $\lambda_{i} \in X(T')$($0 \leq i \leq n-1$) can be regarded as an element in $\mathscr X_p$ (Here $\mathscr X_p$ is the same as the $\mathscr X_p$ defined in \S\ref{X_p} for $\ggg=\frak {sl}_n$).

\begin{lemma}\label{simple res}
	For any $\lambda \in \Lambda=\bigoplus_{i=1}^n \mathbb{F}_p \varepsilon_{i}$, then $\res_{\chi}L_\chi(\lambda) \cong L_{\chi|_{\ggg'}}(\lambda|_{\hhh'})$ as $U_\chi(\frak {sl}_n)$-modules.
\end{lemma}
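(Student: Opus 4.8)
The plan is to reduce the statement to the uniqueness of the simple quotient of a baby Verma module. The key inputs are: the compatibility of baby Verma modules with restriction (Lemma \ref{lem: 2.4.10}(3)), the fact that $\res_\chi L_\chi(\lambda)$ is simple (Theorem \ref{thm: 3.1.3} and Remark \ref{res is simple}), and Lemma \ref{lem: univ}. First I would verify that $\lambda|_{\hhh'}$ genuinely lies in $\mathscr{X}_p$ for $\sl_n$: writing $\lambda=\sum_{i=1}^{n}a_i\varepsilon_i$ with $a_i\in\mathbb{F}_p$, one has $\sum_{i=1}^{n}\varepsilon_i|_{\hhh'}=0$ because the diagonal entries of a traceless matrix sum to zero, so $\lambda|_{\hhh'}=\sum_i a_i\varepsilon_i|_{\hhh'}\in\sum_{i=1}^{n-1}\mathbb{F}_p\alpha_i|_{\hhh'}=\mathscr{X}_p$. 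Since $\chi$ is subregular nilpotent it is in particular a nilpotent $p$-character of $\gl_n$ (vanishing on $\hhh\oplus\nnn^+$), so Lemma \ref{lem: 2.4.10}(3) gives an identification $\res_\chi Z_\chi(\lambda)=Z_{\chi|_{\ggg'}}(\lambda|_{\hhh'})$ of $U_\chi(\sl_n)$-modules. Moreover $\chi|_{\ggg'}$ is again subregular nilpotent (Lemma \ref{lem: 2.4.10}(1)), hence of standard Levi form, so Lemma \ref{lem: univ} is applicable to $U_\chi(\sl_n)$.

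Next, the canonical projection $Z_\chi(\lambda)\twoheadrightarrow L_\chi(\lambda)$ of $U_\chi(\gl_n)$-modules is a fortiori a surjective homomorphism of $U_\chi(\sl_n)$-modules; combined with the identification of the previous step it exhibits $\res_\chi L_\chi(\lambda)$ as a quotient of the baby Verma module $Z_{\chi|_{\ggg'}}(\lambda|_{\hhh'})$. By Remark \ref{res is simple} the $U_\chi(\sl_n)$-module $\res_\chi L_\chi(\lambda)$ is simple. Therefore $\res_\chi L_\chi(\lambda)$ is a simple quotient of $Z_{\chi|_{\ggg'}}(\lambda|_{\hhh'})$, and Lemma \ref{lem: univ} says that such a simple quotient is unique and coincides with $L_{\chi|_{\ggg'}}(\lambda|_{\hhh'})$. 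Hence $\res_\chi L_\chi(\lambda)\cong L_{\chi|_{\ggg'}}(\lambda|_{\hhh'})$, as desired.

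The one point that I expect to require the most care — and hence the main obstacle — is the appeal to Remark \ref{res is simple}, i.e. making sure simplicity of $\res_\chi L_\chi(\lambda)$ is available for \emph{every} $\lambda\in\Lambda$ and not only for the distinguished weights $\lambda_i=\sigma^i.\lambda_0$ appearing in Theorem \ref{thm: 3.1.3}. This is handled by first using Lemma \ref{lem: fundamental iso} to replace $L_\chi(\lambda)$ (and $Z_\chi(\lambda)$) by an isomorphic $L_\chi(\sigma^i.\lambda_0)$ with $\lambda_0\in C$ and $0\leq i\leq n-1$, and then re-running the argument from the proof of Theorem \ref{thm: 3.1.3}: since every element of $W_I$ fixes $\varepsilon_n$, one gets $L_\chi(\lambda_i)\otimes\bbk_1\ncong L_\chi(\lambda_i)$, which by Proposition \ref{Prop: 2.2.4} and Corollary \ref{cor: 2.4.3} forces $\res_\chi L_\chi(\lambda_i)$ to be simple. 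Everything else in the argument is routine bookkeeping with the reciprocity \eqref{eq: 2.1} and the various identifications already set up in \S2.
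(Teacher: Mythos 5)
Your proposal is correct and takes essentially the same route as the paper: both identify $\res_\chi Z_\chi(\lambda)$ with $Z_{\chi|_{\ggg'}}(\lambda|_{\hhh'})$ via Lemma \ref{lem: 2.4.10}(3), invoke Remark \ref{res is simple} for simplicity of $\res_\chi L_\chi(\lambda)$, and then conclude by the uniqueness of the simple quotient of the baby Verma module (the paper phrases this last step via the containment of maximal submodules $N\subseteq M$ and the induced surjection, which is the same argument). Your extra care in justifying that Remark \ref{res is simple} really covers all $\lambda\in\Lambda$ is a welcome addition, since the paper states that remark without proof.
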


\begin{proof}
For $\lambda \in \Lambda$, Remark \ref{res is simple} implies that $\res_{\chi}L_\chi(\lambda)$ is a simple $U_\chi(\frak {sl}_n)$-module. Lemma \ref{lem: 2.4.10}(3) implies that  $\res_{\chi}Z_\chi(\lambda) \cong Z_{\chi|_{\ggg'}}(\lambda|_{\hhh'})$.

Let $N$ be the unique maximal submodule of the $U_\chi(\ggg)$-module $Z_\chi(\lambda)$, and $M$ be the unique maximal submodule of the $U_\chi(\ggg')$-module $\res_{\chi}Z_\chi(\lambda) \cong Z_{\chi|_{\ggg'}}(\lambda|_{\hhh'})$. Clearly, $N \subseteq M$, i.e. $N$ is the submodule of $M$ as $U_\chi(\ggg')$-modules.

Since $L_\chi(\lambda) \cong Z_\chi(\lambda)/N$ as $U_\chi(\ggg)$-modules, then we have $U_\chi(\ggg')$-module isomorphisms $\res_\chi{L_\chi(\lambda)} \cong \res_\chi{(Z_\chi(\lambda)/N)}=Z_{\chi|_{\ggg'}}(\lambda|_{\hhh'})/({\res_\chi N}) \stackrel{\psi}{\longrightarrow} Z_{\chi|_{\ggg'}}(\lambda|_{\hhh'})/M \cong L_{\chi|_{\ggg'}}(\lambda|_{\hhh'})$. Obviously, $\psi$ is a surjective $U_\chi(\ggg')$-module homomorphism, and $\ker \psi$ is a submodule of $\res_\chi{L_\chi(\lambda)}$. Since $\res_\chi{L_\chi(\lambda)}$ is a simple $U_\chi(\ggg')$-module, then $\ker \psi=0$ which implies that $\psi$ is injective, thus $\psi$ is a $U_\chi(\ggg')$-module isomorphism.
\end{proof}

Lemma \ref{simple res} implies that $L_\chi(\lambda_i) \cong \text{res}_{\underline{\chi}}{L_{\underline{\chi}}}(\underline{\lambda_i})$ as $U_\chi(\frak {sl}_n)$-modules with $0 \leq i \leq n-1$.

Because $\lambda_0 \in C'$, it is easy to see that $\underline{\lambda_0}\in C$. So  $Z_{\underline{\chi}}(\underline{\lambda_0})$ is the case we have just discussed.
Since $\underline{\lambda_0}+\rho \in \Lambda$(i.e. $\lambda_0+\rho \in X(T)$), set $\underline{\lambda_0}+\rho=\sum_{i=1}^{n}t_i\varepsilon_i$ with $t_i \in \mathbb{F}_p$.

\begin{corollary} \label{cor: 3.1.5} Assume that $\chi \in \frak {sl}^*_n$ is subregular nilpotent $p$-character of the form as in \eqref{eq: subregular}, and $\lambda_{i} \in \mathscr X_p$ ($0 \leq i \leq n-1$) and $t_i$ ($1 \leqslant i \leqslant n$) is defined above. Let $N_j:=\sharp \{0 \leq s \leq n-1 \mid \exists \tau \in \text{Stab}_{\frak S_n}(n), \text{such that }t_{l-j}-t_{\tau(l)-s}=t_{n-j}-t_{n-s} \text{ for all } 1 \leq l \leq n-1 , \text{and }r_{n-s-1}>0\}$ with $0 \leq j \leq n-1$. For $0 \leq i \leq n-1$, we have $[Z_\chi(\lambda_{i}):L_\chi(\lambda_{j})]=N_j$.
\end{corollary}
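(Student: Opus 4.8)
The plan is to reduce the statement to the $\gl_n$-decomposition of $Z_{\underline{\chi}}(\underline{\lambda_i})$ provided by Theorem~\ref{thm: 3.1.2}, then to transport that decomposition through the exact restriction functor $\res_{\underline{\chi}}$, and finally to translate the relation ``isomorphic after restriction'' into the combinatorial condition that defines $N_j$, using Corollary~\ref{cor: 3.1.4} and the explicit computation already performed in Remark~\ref{Rem of iso}.

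First I would assemble the ingredients. By Lemma~\ref{lem: 2.4.10}(3) one has $\res_{\underline{\chi}}Z_{\underline{\chi}}(\underline{\lambda_i})\cong Z_\chi(\lambda_i)$, and by Lemma~\ref{simple res} one has $\res_{\underline{\chi}}L_{\underline{\chi}}(\underline{\lambda_s})\cong L_\chi(\lambda_s)$, which is a \emph{simple} $U_\chi(\sl_n)$-module. On the $\gl_n$ side, Theorem~\ref{thm: 3.1.2}(1) gives the identity $[Z_{\underline{\chi}}(\underline{\lambda_i})]=\sum_{r_{n-1-s}>0}[L_{\underline{\chi}}(\underline{\lambda_s})]$ in the Grothendieck group of $U_{\underline{\chi}}(\gl_n)$-modules, with each of these multiplicities equal to $1$. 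Applying the exact functor $\res_{\underline{\chi}}$ turns this into $[Z_\chi(\lambda_i)]=\sum_{r_{n-1-s}>0}[L_\chi(\lambda_s)]$ in the Grothendieck group of $U_\chi(\sl_n)$-modules. Hence $[Z_\chi(\lambda_i):L_\chi(\lambda_j)]$ equals the number of indices $s$ with $0\le s\le n-1$, $r_{n-1-s}>0$ and $L_\chi(\lambda_s)\cong L_\chi(\lambda_j)$ as $U_\chi(\sl_n)$-modules; since this count does not involve $i$, the independence of $i$ asserted in the statement is automatic.

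It then remains to identify this count with $N_j$. By Corollary~\ref{cor: 3.1.4}, $L_\chi(\lambda_s)\cong L_\chi(\lambda_j)$ as $U_\chi(\sl_n)$-modules if and only if $L_{\underline{\chi}}(\underline{\lambda_s})\cong L_{\underline{\chi}}(\underline{\lambda_j})\otimes\bbk_a$ for some $a\in\mathbb{F}_p$; by Lemma~\ref{lem: 2.3.2} together with Lemma~\ref{lem: fundamental iso 2}, this in turn is equivalent to $\lambda_j+a\varpi_n+\rho\in W_I.(\lambda_s+\rho)$ for some $a$. Reading the computation in Remark~\ref{Rem of iso} in both directions (every step there is an equivalence) then shows that this holds exactly when there is $\tau\in\text{Stab}_{\frak S_n}(n)$ with $t_{l-s}-t_{\tau(l)-j}=t_{n-s}-t_{n-j}$ for all $1\le l\le n-1$; replacing $\tau$ by $\tau^{-1}$, which again fixes $n$, rewrites this as $t_{l-j}-t_{\tau^{-1}(l)-s}=t_{n-j}-t_{n-s}$ for all $1\le l\le n-1$, that is, precisely the condition appearing in the definition of $N_j$. (Equivalently, one may just note that the relation ``$L_\chi(\lambda_s)\cong L_\chi(\lambda_j)$ as $U_\chi(\sl_n)$-modules'' is visibly symmetric in $s$ and $j$.) This gives $[Z_\chi(\lambda_i):L_\chi(\lambda_j)]=N_j$. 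The only genuinely delicate point is keeping the indices and this symmetry straight in the last step; the real combinatorial work is already contained in Remark~\ref{Rem of iso}, and the representation-theoretic content is confined to Theorem~\ref{thm: 3.1.2}, Corollary~\ref{cor: 3.1.4}, Lemma~\ref{simple res}, and the exactness of $\res_{\underline{\chi}}$, so I do not expect a new obstacle.
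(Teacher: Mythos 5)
Your proposal is correct and follows essentially the same route as the paper: restrict the $\gl_n$ composition series $[Z_{\underline{\chi}}(\underline{\lambda_i})]=\sum_{r_{n-1-s}>0}[L_{\underline{\chi}}(\underline{\lambda_s})]$ from Theorem~\ref{thm: 3.1.2}(1) through $\res_{\underline{\chi}}$ (via Lemma~\ref{lem: 2.4.10}(3) and the simplicity of the restricted simples), then count coincidences among the $L_\chi(\lambda_s)$ using Corollary~\ref{cor: 3.1.4} and Remark~\ref{Rem of iso}. Your explicit handling of the $s$-versus-$j$ index symmetry (replacing $\tau$ by $\tau^{-1}$) is a detail the paper leaves implicit, but it is not a different argument.
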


\begin{proof}
	For $\lambda_{i} \in \mathscr X_p$ with $0 \leq i \leq n-1$, we can extend $\lambda_i$ and $\chi$ to the elements in $\Lambda$ and $\frak {gl}^*_n$ respectively, and denote them by $\underline{\lambda_i}$ and $\underline{\chi}$. Thus $\underline{\lambda_i}|_{\hhh'}=\lambda_i$ and $\underline{\chi}|_{\frak {sl}_n}=\chi$. Theorem \ref{thm: 3.1.3} shows that $L_\chi(\lambda_i) \cong \res_{\underline{\chi}}{L_{\underline{\chi}}}(\underline{\lambda_i})$ with $0 \leq i \leq n-1$.
	
	Lemma \ref{lem: 2.4.10}(3) implies that $Z_\chi(\lambda_{i})=\res_{\underline{\chi}}{Z_{\underline{\chi}}}(\underline{\lambda_i})$. By Theorem \ref{thm: 3.1.2}(1), we obtain $[Z_{\underline{\chi}}(\underline{\lambda_i})]=\sum_{r_{n-1-j}>0}[L_{\underline{\chi}}(\underline{\lambda_j})]$.
	Thus $$[Z_\chi(\lambda_{i})]=[\res_{\underline{\chi}}{Z_{\underline{\chi}}}(\underline{\lambda_i})]=\sum_{r_{n-1-j}>0}[\res_{\underline{\chi}}L_{\underline{\chi}}(\underline{\lambda_j})].$$
	
	Due to Remark \ref{Rem of iso} and Corollary \ref{cor: 3.1.4}, we can conclude that $[Z_\chi(\lambda_{i}):\res_{\underline{\chi}}L_{\underline{\chi}}(\underline{\lambda_j})]=N_j$. Since $L_\chi(\lambda_i) \cong \res_{\underline{\chi}}{L_{\underline{\chi}}}(\underline{\lambda_i})$, then $[Z_\chi(\lambda_{i}):L_\chi(\lambda_{j})]=N_j$.
\end{proof}

\subsection{The proof of Theorem \ref{thm:subregular}}\label{sec: proof of thm 0.5}
Keep in mind that $\chi \in  {\ggg}^*$ is of standard Levi form with $I=\{\alpha_{1},...,\alpha_{n-2}\}$ and $\lambda_{i}$ defined above($\lambda_{i} \in \mathscr X_p$), we have $\underline{\chi} \in \frak {gl}_n^*$ and $\underline{\lambda_{i}} \in \Lambda$. Thus $\underline{\lambda_{i}}|_{\hhh'}=\lambda_{i}$ and $\underline{\chi}|_{\frak {sl}_n}=\chi$. Note that $Z_\chi(\lambda_i),L_\chi(\lambda_i),Z_{{\chi}}({\lambda_i},\alpha_{n-1})$ are $U_{{\chi}}(\frak {sl}_n)$-modules, $Z_{\underline{\chi}}(\underline{\lambda_i})$, $L_{\underline{\chi}}(\underline{\lambda_i})$ and $Z_{\underline{\chi}}(\underline{\lambda_i},\alpha_{n-1})$ are $U_{\underline{\chi}}(\frak {gl}_n)$-modules. It is east to see that $\text{dim}Z_{\underline{\chi}}(\underline{\lambda_i},\alpha_{n-1})=\text{dim}Z_{{\chi}}({\lambda_i},\alpha_{n-1})$.

Theorem \ref{thm: 3.1.2} implies that $\dim Z_{\underline{\chi}}(\underline{\lambda_i},\alpha_{n-1})=\dim L_{\underline{\chi}}(\underline{\lambda_i})$. Theorem \ref{thm: 3.1.3} shows that $L_\chi(\lambda_i) \cong \res_{\underline{\chi}}{L_{\underline{\chi}}}(\underline{\lambda_i})$. Thus we conclude that $\dim L_\chi(\lambda_i)=\dim Z_{{\chi}}({\lambda_i},\alpha_{n-1})$. It is clear that $L_\chi(\lambda_i)$ is a homomorphic image of $Z_{{\chi}}({\lambda_i},\alpha_{n-1})$. Thus $\dim L_\chi(\lambda_i)=\dim Z_{{\chi}}({\lambda_i},\alpha_{n-1})$ yields that $L_\chi(\lambda_{i}) \cong Z_\chi(\lambda_{i},\alpha_{n-1})$ as $U_{{\chi}}(\frak {sl}_n)$-modules. It is clear that $Z_\chi(\lambda_{i},\alpha_{n-1})$ is spanned by a basis in \eqref{eq: subregular basis}. By Theorem \ref{thm: 3.1.2}, we have $\dim Z_{\underline{\chi}}(\underline{\lambda_i},\alpha_{n-1})=\dim L_{\underline{\chi}}(\underline{\lambda_{i}})=p^{N-1}r_{n-1-i}$. Since $\dim Z_{\underline{\chi}}(\underline{\lambda_i},\alpha_{n-1})=\dim Z_{{\chi}}({\lambda_i},\alpha_{n-1})$  and $\dim L_\chi(\lambda_i)=\dim Z_{{\chi}}({\lambda_i},\alpha_{n-1})$, then $\dim L_\chi(\lambda_{i})=p^{N-1}r_{n-1-i}$. So Theorem \ref{thm:subregular}(1) holds. Theorem \ref{thm:subregular}(2) is Corollary \ref{cor: 3.1.5}.

\subsection{An example}

Now we use Corollary \ref{cor: 3.1.5} to handle the case $n=p=3$.

\begin{example}
Let $\chi \in \frak {sl}_3^*$ is subregular nilpotent and be of standard Levi form with $I=\{\alpha_{1}\}$, and $p=3, \sigma=s_1s_2$ ($n=p=3$ in Theorem \ref{thm:subregular}). Since $\lambda$ is determined  by $\lambda(h_{\alpha_1})$ and $\lambda(h_{\alpha_2})$, we can regard $\lambda=(\lambda(h_{\alpha_1}),\lambda(h_{\alpha_2})) \in \mathbb{F}_3^2$. Thus $\rho=\varpi_1+\varpi_2=\alpha_{1}+\alpha_2=(1,1)$. So there are 4 types of $W$-orbits:

(a) $\lambda_1^1=(0,0), \lambda_2^1=(1,1)$. Note that $\sigma^2.\lambda_1^1=\sigma.\lambda_1^1=\lambda_1^1$, $\lambda_2^1 \in W_I.\lambda_1^1$;

(b) $\lambda_1^2=(2,2)$. Note that $\sigma^2.\lambda_1^2=\sigma.\lambda_1^2=\lambda_1^2$;

(c) $\lambda_1^3=(1,0), \lambda_2^3=(0,2), \lambda_3^3=(2,1)$. Note that $\sigma.\lambda_1^3=\lambda_3^3, \sigma^2.\lambda_1^3=\lambda_2^3, \lambda_2^3 \in W_I.\lambda_1^3$;

(d) $\lambda_1^4=(0,1), \lambda_2^4=(1,2), \lambda_3^4=(2,0)$. Note that $\sigma.\lambda_1^4=\lambda_3^4, \sigma^2.\lambda_1^4=\lambda_2^4, \lambda_2^4 \in W_I.\lambda_1^4$.

For case (a), since $\lambda_2^1 \in W_I.\lambda_1^1$, then $Z_\chi(\lambda_1^1) \cong Z_\chi(\lambda_2^1)$(Lemma \ref{lem: fundamental iso}). Since $\underline{\lambda_1^1}+\rho=\varpi_1+\varpi_2$, thus we have $r_1=r_2=1$ and $r_0=3-r_1-r_2=1$. By Theorem \ref{thm: 3.1.2}(1), we have $[Z_{\underline{\chi}}(\underline{\lambda_1^1})]=[L_{\underline{\chi}}(\underline{\lambda_1^1})]+[L_{\underline{\chi}}(\sigma.\underline{\lambda_1^1})]+[L_{\underline{\chi}}(\sigma^2.\underline{\lambda_1^1})]$(as $U_\chi(\frak {gl}_3)$-modules).
Theorem \ref{thm: 3.1.3} deduces that $\text{res}_{\underline{\chi}}L_{\underline{\chi}}(\sigma^i.\underline{\lambda_1^1}) \cong L_\chi(\sigma^i.\lambda_1^1)$ with $i=0,1,2$, and Lemma \ref{lem: 2.4.10}(3) implies that $Z_\chi(\lambda_1^1)=\text{res}_{\underline{\chi}}(Z_{\underline{\chi}}(\underline{\lambda_1^1}))$. Hence $[Z_\chi(\lambda_1^1)]=[\text{res}_{\underline{\chi}}(Z_{\underline{\chi}}(\underline{\lambda_1^1}))]=[\text{res}_{\underline{\chi}}(L_{\underline{\chi}}(\underline{\lambda_1^1}))]+[\text{res}_{\underline{\chi}}(L_{\underline{\chi}}(\sigma.\underline{\lambda_1^1}))]+[\text{res}_{\underline{\chi}}(L_{\underline{\chi}}(\sigma^2.\underline{\lambda_1^1}))]=[L_\chi(\lambda_1^1)]+[L_\chi(\sigma.\lambda_1^1)]+[L_\chi(\sigma^2.\lambda_1^1)]=3[L_\chi(\lambda_1^1)]$.

For case (b), $\underline{\lambda_1^2}=(1,2,0)=\varepsilon_1+2\varepsilon_2$, then $\lambda_1^2+\rho=0$ which implies $r_1=r_2=0, r_0=3-r_1-r_2=3$. Using Theorem \ref{thm: 3.1.2}(1), we get $Z_{\underline{\chi}}(\underline{\lambda_1^2}) \cong L_{\underline{\chi}}(\underline{\lambda_1^2})$($U_\chi(\frak {gl}_3)$-module isomorphism), thus $\text{res}_{\underline{\chi}} Z_{\underline{\chi}}(\underline{\lambda_1^2}) \cong \text{res}_{\underline{\chi}} L_{\underline{\chi}}(\underline{\lambda_1^2})$.
 By Lemma \ref{lem: 2.4.10}(3), we have $\text{res}_{\underline{\chi}}(Z_{\underline{\chi}}(\underline{\lambda_1^2}))=Z_\chi(\lambda_1^2)$. Theorem \ref{thm: 3.1.3} implies that $\text{res}_{\underline{\chi}}(L_{\underline{\chi}}(\underline{\lambda_1^2})) \cong L_\chi(\lambda_1^2)$. Hence $Z_\chi(\lambda_1^2)=\text{res}_{\underline{\chi}} Z_{\underline{\chi}}(\underline{\lambda_1^2}) \cong \text{res}_{\underline{\chi}} L_{\underline{\chi}}(\underline{\lambda_1^2})\cong L_\chi(\lambda_1^2)$, namely $Z_\chi(\lambda_1^2) \cong L_\chi(\lambda_1^2)$.

For case (c), since $\lambda_2^3 \in W_I.\lambda_1^3$, then $Z_\chi(\lambda_1^3) \cong Z_\chi(\lambda_2^3)$(Lemma \ref{lem: fundamental iso}). Due to the fact that $\underline{\lambda_1^3}=\varepsilon_1$, thus $\underline{\lambda_1^3}+\rho=\varepsilon_2=2\varpi_1+\varpi_2$, so $r_1=2,r_2=1,r_0=3-r_1-r_2=0$. By Theorem \ref{thm: 3.1.2}(1), we conclude $[Z_{\underline{\chi}}(\underline{\lambda_1^3})]=[L_{\underline{\chi}}(\underline{\lambda_1^3})]+[L_{\underline{\chi}}(\sigma.\underline{\lambda_1^3})]$. Since $\underline{\lambda_1^3}+\rho=\varepsilon_2$, then $t_1=0, t_{-1}=t_2=1, t_0=t_3=0$. We want to calculate $M_0$ and $N_0$ defined in Corollary \ref{cor: 3.1.5}.

If there exists $\tau \in \text{Stab}_{\frak S_3}(3)$ such that $t_{l}-t_{\tau(l)-s}=t_3-t_{3-s}$ for $l=1,2$.

If $s=1$, then
\begin{equation*}
\begin{cases}
t_{1}-t_{\tau(1)-1}=t_3-t_2 \\
t_{2}-t_{\tau(2)-1}=t_3-t_2 \\
\end{cases}
\end{equation*}

which implies
\begin{equation*}
\begin{cases}
t_{\tau(1)-1}=1 \\
t_{\tau(2)-1}=2 \\
\end{cases}
\end{equation*}

Since $t_0=t_1=0$ and $t_2=1$, then $t_{\tau(2)-1}=2$ can not be one of $t_0,t_1$ and $t_2$. This means such $\tau$ is non-existent, i.e. $1 \notin M_0$.

If $s=2$, then $r_{n-s-1}=r_0=0$. By the definition of $M_0$, we have $2 \notin M_0$.

If $s=0$, then $t_{l}-t_{\tau(l)-s}=t_3-t_{3-s}$ yields that $\tau(l)=l$. Since $l=1,2$, then $\tau(1)=1$, $\tau(2)=2$. So we must have $\tau(0)=0$, i.e. $\tau=\text{id}$. This means $0 \in M_0$.

Through the above discussion, we conclude that $M_0=\{0\}$, thus $N_0=1$.

$[Z_{\underline{\chi}}(\underline{\lambda_1^3})]=[L_{\underline{\chi}}(\underline{\lambda_1^3})]+[L_{\underline{\chi}}(\sigma.\underline{\lambda_1^3})]$ implies that $[Z_{{\chi}}({\lambda_1^3})]=[L_{{\chi}}({\lambda_1^3})]+[L_{{\chi}}({\lambda_3^3})]$. Since $N_0=1$, we have $[Z_{{\chi}}({\lambda_1^3}):L_{{\chi}}({\lambda_1^3})]=1$(Corollary \ref{cor: 3.1.5}). If $L_\chi(\lambda_1^3) \cong L_\chi(\lambda_3^3)$, then $[Z_{{\chi}}({\lambda_1^3})]=[L_{{\chi}}({\lambda_1^3})]+[L_{{\chi}}({\lambda_3^3})]$ implies that $[Z_{{\chi}}({\lambda_1^3}):L_{{\chi}}({\lambda_1^3})]=2$. This contradicts with $[Z_{{\chi}}({\lambda_1^3}):L_{{\chi}}({\lambda_1^3})]=1$. Hence this yields that $L_\chi(\lambda_1^3) \ncong L_\chi(\lambda_3^3)$ as $U_\chi(\frak {sl}_3)$-module.

For case (d), we also have $Z_\chi(\lambda_1^4) \cong Z_\chi(\lambda_2^4)$ and $[Z_\chi(\lambda_1^4)]=[L_\chi(\lambda_1^4)]+[L_\chi(\lambda_3^4)]$ with $L_\chi(\lambda_1^4) \ncong L_\chi(\lambda_3^4)$.
\end{example}


\begin{corollary}\label{cor: 3.2.1} Let $\bbk$ is algebraically closed field of characteristic 3, and $\chi \in \frak {sl}^*_3(\bbk)$ be of standard Levi form with $I=\{\alpha_{1}\}$ which is subregular nilpotent. Denote $\lambda_1^1=(0,0), \lambda_2^1=(1,1); \lambda_1^2=(2,2); \lambda_1^3=(1,0), \lambda_2^3=(0,2), \lambda_3^3=(2,1); \lambda_1^4=(0,1), \lambda_2^4=(1,2), \lambda_3^4=(2,0)$ where these $\lambda_{k}^j \in \mathbb{F}_3^2$.

Then $Z_\chi(\lambda_1^1) \cong Z_\chi(\lambda_2^1), [Z_\chi(\lambda_1^1)]=3[L_\chi(\lambda_1^1)]$;

$Z_\chi(\lambda_1^2) \cong L_\chi(\lambda_1^2)$;

$Z_\chi(\lambda_1^i) \cong Z_\chi(\lambda_2^i), [Z_\chi(\lambda_1^i)]=[Z_\chi(\lambda_2^i)]=[Z_\chi(\lambda_3^i)]=[L_\chi(\lambda_1^i)]+[L_\chi(\lambda_3^i)]$ and $L_\chi(\lambda_1^i) \ncong L_\chi(\lambda_3^i)$ with $i=3,4$.
\end{corollary}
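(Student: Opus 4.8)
The plan is to run through the four types of $W$-orbit in $\mathbb F_3^2$ listed in the statement one at a time, and for each to push the $\frak{gl}_3$-side information supplied by Theorem \ref{thm: 3.1.2} down to $\frak{sl}_3$ via the restriction results of \S\ref{sec: 3}. Throughout one uses $\rho=\varpi_1+\varpi_2=(1,1)$, $p=3$, $\sigma=s_1s_2$, and, for the representative $\lambda_0\in C'$ of an orbit, records $r_1=\langle\lambda_0+\rho,\alpha_1^\vee\rangle$, $r_2=\langle\lambda_0+\rho,\alpha_2^\vee\rangle$, $r_0=3-r_1-r_2$. The two standing tools are Lemma \ref{lem: fundamental iso}, which makes $Z_\chi$ constant along $W_I$-orbits, and the pair Lemma \ref{lem: 2.4.10}(3) and Lemma \ref{simple res}, which give $\res_{\underline\chi}Z_{\underline\chi}(\underline\mu)\cong Z_\chi(\mu)$ and $\res_{\underline\chi}L_{\underline\chi}(\underline\mu)\cong L_\chi(\mu)$ for the canonical lifts to $\frak{gl}_3$.

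First I would dispatch the two ``trivial'' orbits. For (a), $\lambda_1^1+\rho=\varpi_1+\varpi_2$ gives $r_0=r_1=r_2=1$, so Theorem \ref{thm: 3.1.2}(1) yields $[Z_{\underline\chi}(\underline{\lambda_1^1})]=\sum_{i=0}^{2}[L_{\underline\chi}(\sigma^i.\underline{\lambda_1^1})]$ over $\frak{gl}_3$; applying $\res_{\underline\chi}$, under which every term becomes $L_\chi(\lambda_1^1)$ since $\sigma.\lambda_1^1=\lambda_1^1$ in $\hhh'^*$, gives $[Z_\chi(\lambda_1^1)]=3[L_\chi(\lambda_1^1)]$, and $Z_\chi(\lambda_1^1)\cong Z_\chi(\lambda_2^1)$ is Lemma \ref{lem: fundamental iso} as $\lambda_2^1\in W_I.\lambda_1^1$. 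For (b), a lift with $\underline{\lambda_1^2}+\rho=0$ forces $r_1=r_2=0$, $r_0=3$, so Theorem \ref{thm: 3.1.2}(1) gives $Z_{\underline\chi}(\underline{\lambda_1^2})\cong L_{\underline\chi}(\underline{\lambda_1^2})$; restricting gives $Z_\chi(\lambda_1^2)\cong L_\chi(\lambda_1^2)$.

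The two substantive orbits (c) and (d) also need the multiplicities $N_j$ of Corollary \ref{cor: 3.1.5}. For (c), a lift with $\underline{\lambda_1^3}+\rho=\varepsilon_2=2\varpi_1+\varpi_2$ gives $r_1=2$, $r_2=1$, $r_0=0$, so Theorem \ref{thm: 3.1.2}(1) yields $[Z_{\underline\chi}(\underline{\lambda_1^3})]=[L_{\underline\chi}(\underline{\lambda_1^3})]+[L_{\underline\chi}(\sigma.\underline{\lambda_1^3})]$ (the $\sigma^2$-term dropping since $r_0=0$), which restricts to $[Z_\chi(\lambda_1^3)]=[L_\chi(\lambda_1^3)]+[L_\chi(\lambda_3^3)]$, with $Z_\chi(\lambda_1^3)\cong Z_\chi(\lambda_2^3)$ again from Lemma \ref{lem: fundamental iso}. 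The remaining assertion $L_\chi(\lambda_1^3)\ncong L_\chi(\lambda_3^3)$ is where Corollary \ref{cor: 3.1.5} enters: writing $\underline{\lambda_1^3}+\rho=\varepsilon_2$ as $(t_1,t_2,t_3)=(0,1,0)$ with indices reduced mod $3$ (so $t_0=t_3=0$, $t_{-1}=t_2=1$), one checks that of $s\in\{0,1,2\}$ the condition ``$\exists\,\tau\in\text{Stab}_{\frak S_3}(3)$ with $t_{l}-t_{\tau(l)-s}=t_3-t_{3-s}$ for $l=1,2$ and $r_{2-s}>0$'' holds only for $s=0$ --- $s=2$ is excluded because $r_0=0$, and $s=1$ because it would force $t_{\tau(2)-1}=2$, a value no $t_i$ takes --- so $N_0=1$. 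Since $[Z_\chi(\lambda_1^3):L_\chi(\lambda_1^3)]=N_0=1$ by Corollary \ref{cor: 3.1.5}, while $L_\chi(\lambda_1^3)\cong L_\chi(\lambda_3^3)$ would make that multiplicity $2$ in the length-$2$ module $Z_\chi(\lambda_1^3)$, the two simples are non-isomorphic; orbit (d) runs identically with its own periodic data $(t_1,t_2,t_3)=(0,2,0)$. The only genuine obstacle is this $N_j$-bookkeeping: the $\frak{gl}_3$ composition series is always multiplicity-free, so it is exactly the stabiliser count of Corollary \ref{cor: 3.1.5} that must be carried out to detect that $\res_\chi$ fails to be multiplicity-free on $\frak{sl}_3$ in cases (c) and (d).
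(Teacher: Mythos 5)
Your proposal is correct and follows essentially the same route as the paper's own treatment (the worked Example preceding Corollary \ref{cor: 3.2.1}): orbit-by-orbit use of Theorem \ref{thm: 3.1.2}(1) on the $\frak{gl}_3$ side, descent via Lemma \ref{lem: fundamental iso}, Lemma \ref{lem: 2.4.10}(3) and Lemma \ref{simple res}, and the stabiliser count $N_0=1$ from Corollary \ref{cor: 3.1.5} to rule out $L_\chi(\lambda_1^i)\cong L_\chi(\lambda_3^i)$ in cases (c) and (d). The numerical data $(r_0,r_1,r_2)$ and $(t_1,t_2,t_3)$ you record agree with the paper's.
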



\begin{thebibliography}{9}

\bibitem{BM} R. Bezrukavnikov and I. Mirkovi$\acute{c}$, {\em Representations of semisimple Lie algebras in prime characteristic and the noncommutative Springer resolution, With an appendix by Eric Sommers},  Ann. of Math. 178  (2013),  no. 3, 835-919.

\bibitem{Ben} D. J. Benson, {\em Representations and Cohomology I. Basic representation theory of finite groups and associative algebras},  Cambridge University Press 1995.

\bibitem{Wil} G. Williamson, {\em Parity sheaves and the Hecke category},  Proceedings of the International Congress of Mathematicians, Rio de Janeiro 2018. Vol. I. Plenary lectures, 979-1015, World Sci. Publ., Hackensack, NJ, 2018.

\bibitem{FP1} E. M. Friedlander, B. J. Parshall, {\em Modular representation theory of Lie algebras}, Amer. J. Math. 110(1988), 1055-1093.

\bibitem{FP2} E. M. Friedlander, B. J. Parshall, {\em Deformations of Lie algebra representations}, Amer. J. Math. 112(1990), 375-395.

\bibitem{Hum} J. E. Humphreys, {\em Representations of Semisimple Lie Algebras in the BGG Category $\mathcal{O}$}, 1991 Mathematics Subject Classification.

\bibitem{Hum 2} J. E. Humphreys, {\em Linear Algebraic Groups }, 1991 Mathematics Subject Classification: 20G15.

\bibitem{Rol} R. Farnsteiner, {\em Group-graded algebras, extensions of infinitesimal groups, and applications},  Transform. Groups 14(2009), no.1, 127-162.

\bibitem{Jan1} J. C. Jantzen, {\em Representations of Lie algebras in prime characteristic}, in: A.
Borel (Ed.), Representation Theories and Algebraic Geometry,
Proceedings, Montreal, 1997, in: NATO ASI Series, Vol. C514, Kluwer,
Dordrecht, 1998, pp. 185-235.

\bibitem{Jan2} J. C. Jantzen, {\em Representations of Lie algebras in positive characteristic},  Representation theory of algebraic groups and quantum groups, 175-218, Adv. Stud. Pure Math., 40, Math. Soc. Japan, Tokyo, 2004.


\bibitem{Jan3} J. C. Jantzen, {\em Subregular nilpotent
representations of $\frak {sl}_n$ and $\frak {so}_{2n+1}$}, Math. Proc. Cambridge
Philos. Soc. 126 (1999), 223-257.

\bibitem{Jan4} J. C. Jantzen, {\em Restrictions from $\frak {gl}_n$ to $\frak {sl}_n$}, Journal of Lie Theory Volume 27 (2017), 969-981.

\bibitem{KW} B. Yu. Weisfeiler and  V. G. Kac, {\em Irreducible representations of Lie $p$-algebras}, Funct. Anal. Appl. 5 (1971), 111-117.

\bibitem{KW2} V. G. Kac and B. Yu. Weisfeiler, {\em Coadjoint action of a semisimple algebraic group and the center of the enveloping algebras in  characterisitc $p$}, Indag. Math., 38 (1976), 139-151.

\bibitem{L10} G. Lusztig, {\em Unipotent elements in small characteristic, IV}, Transform. Groups 15 (2010),  no. 4, 921-936.

\bibitem{L11} G. Lusztig, {\em Unipotent elements in small characteristic, III}, J. Algebra  329  (2011), 163-189.

\bibitem{Sp} T. A. Springer, {\em Linear algebraic groups}, Second Edition, Birkh$\ddot{a}$user, Boston, 1998.

\bibitem{Wen} X. Wen, {\em Character formulae for Lie superalgebras $\frak{osp}$ and modular Lie algebras $\frak{sl}$ of low ranks}, PhD Dissertation of East China Normal University, 2016.
\bibitem{Wen17} X. Wen, {\em Representations of $\frak {sl}_3$ in characteristic $3$}, Journal of Algebra and Its Applications Vol. 16, No. 1 (2017).

\bibitem{GP} R. Gordon, A. Premet, {\em Block Representation Type of Reduced Enveloping Algebras}, Transactions of the
American Mathematical Society Volume 354, Number  4, Pages 1549-1581 (2001).

\end{thebibliography}
\end{document}